\newtheorem{theorem}{Theorem}
\newtheorem{remark}{Remark}
\newtheorem{definition}{Definition}
\newtheorem{lemma}{Lemma}
\newtheorem{example}{Example}
\newtheorem{notation}{Notation}
\newtheorem{corollary}{Corollary}
\newtheorem{rem}{Remark}
\newtheorem{Procedure}{Procedure}
\newcommand{\QNUM}{n_\mathrm{p}}
\newcommand{\SWS}{\mathfrak{S}}
\newcommand{\LPV}{\mathfrak{LPV}}
\newcommand{\Rank}{\mathrm{rank}}
\newcommand{\rank}{\mathrm{rank}}
\newcommand{\SPAN}{\mathrm{span}}
\newcommand{\NX}{n_\mathrm{x}}
\newcommand{\NY}{n_\mathrm{y}}
\newcommand{\NU}{n_\mathrm{u}}
\newcommand{\X}{\mathbb{R}^{n_\mathrm{x}}}
\newcommand{\Words}{\mathcal{S}(\mathbb{I}_0^{n_\mathrm{p}})}
\newcommand{\AQ}{\mathbb{I}_0^{n_\mathrm{p}}}
\begin{document}
\title{Realization Theory for LPV State-Space Representations with Affine Dependence}
\author{Mih\'aly Petreczky ~\IEEEmembership{Member,~IEEE}, Roland T\'oth,~\IEEEmembership{Member,~IEEE} and Guillaume Merc\`{e}re ~\IEEEmembership{Member,~IEEE} 
\thanks{Mih\'aly Petreczky (Corresponding author) s with Centre de Recherche en Informatique, Signal et Automatique de Lille (CRIStAL) {\tt\small mihaly.petreczky@ec-lille.fr}}
\thanks{Guillaume Merc\`{e}re is with the University of Poitiers, Laboratoire d'Informatique et d'Automatique pour les Syst\`emes, 2 rue P. Brousse, batiment B25, B.P. 633, 86022 Poitiers Cedex, France. Email: {\tt guillaume.mercere@univ-poitiers.fr}}
\thanks{R. T\'oth is with the Control Systems Group, Department of Electrical Engineering, Eindhoven University of Technology, P.O. Box 513, 5600 MB Eindhoven, The Netherlands. Email: {\tt r.toth@tue.nl}.}
\thanks{This work was partially supported by ESTIREZ project of Region Nord-Pas de Calais, France}}

\maketitle
\begin{abstract}
  In this paper we present a Kalman-style realization theory for linear parameter-varying state-space representations whose matrices depend on the scheduling variables in an affine way (abbreviated as LPV-SSA representations). 
  We show that such a LPV-SSA representation 
  is a minimal (in the sense of having the least number of state-variables) representation of its input-output function, if and only if it is observable and span-reachable. We show that any two minimal 
  LPV-SSA representations of the same input-output function are related by a linear isomorphism, and the isomorphism does not depend on the scheduling variable. 
  We show that an input-output function can be represented by a LPV-SSA representation if and only if the Hankel-matrix of the input-output function has a finite rank. In fact, the rank of the Hankel-matrix gives the dimension of a minimal
  LPV-SSA representation. Moreover, we can formulate a counterpart of partial realization theory for LPV-SSA representation and prove correctness of the Kalman-Ho algorithm
  formulated in \cite{TAW12}. These results thus represent the basis of systems theory for LPV-SSA representation. 
\end{abstract}

\section{Introduction}

\emph{Linear parameter-varying} (LPV) systems represent an intermediate system class between the class of \emph{linear time-invariant} (LTI) systems and systems with \emph{nonlinear} and \emph{time-varying} behavior.  
The underlying idea behind the use of LPV systems is to approximately model nonlinear and time-varying systems by 
linear time-varying difference or differential equations, where the time varying coefficients are 
functions of a time-varying signal, the
so-called \emph{scheduling variable}.  Such equations are called LPV systems \cite{Toth2010SpringerBook,Toth11_LPVBehav}. 
That is, LPV systems are a class of mathematical models having a certain structure (linear and time-varying).
The use of LPV systems is motivated by the fact that control
design for these systems is well developed \cite{Rugh00,Packard94CL,Apkarian95TACT,Carsten96,Lu2004,Wu2006,Scherer2009}. 
More recently, system identification of LPV models has gained attention
\cite{CHToth2012,Toth10_SRIV,Giarre2002,Butcher08,Poolla2008,Wingerden09,LopesDosSantos2008,Sznaier:01,Verdult02,Val13,Val14}.

Despite these advances and the popularity of LPV models, there are significant gaps in their systems theory, in particular, their realization theory.   By realization theory we mean a systematic characterization of the relationship between the input-output behavior of LPV systems and their state-space representations.
More precisely, 
we will make a distinction between LPV state-space representations, which are mathematical models in terms of difference/differential equations, and their input-output behavior (i.e. the set of input-output trajectories which they generate). 
The question realization
theory tries to answer is how to characterize those LPV state-space representations which describe the same set of
input-output trajectories, and how to construct such an LPV state-space representation from the set of input-output trajectories.  The reason that this problem is an important one is as follows.
Notice that it in general, there is no justification to claim that a designated LPV state-space 
representation is the `true" model of the  physical phenomenon of interest. Any other LPV state-space representation which generates the same input-output trajectories as this designated state-space representation can also be viewed as a model of the
physical phenomenon. For example, two different system identification techniques or modelling approaches could yield two
different state-space representations which are equivalent, in the sense that they describe the same
set of input-output trajectories.   In this case, there is no reason to prefer one model over the other one. 
Hence, any controller developed using one such LPV state-space 
representation should be shown to work for all the other LPV state-space representation generating the same input-output behavior.
In order to address this issue, we need realization theory.  
As for system identification, the best we can hope for a system identification algorithm is that it will find one LPV state-space representation which generates (at least approximately, with some error) the observed input-output trajectories. 
Hence, we need realization theory for analyzing system identification algorithms, as it tells us the set of possible correct
outcomes of any system identification algorithm under ideal circumstances (no noise, etc.). 
The same goes for analyzing model reduction algorithms. 
Moreover, from a practical point of view,  what we are interested in is the interplay between system identification, model
reduction and control design. Roughly speaking, we would like to know when we can hope that a  controller which was calculated based on a plant model obtained from system identification and model reduction algorithms will work for the original system. 
In order to understand this interplay, we need to understand the relationship between various LPV state-space representations which are consistent with the same input-output trajectories, i.e. we need realization theory.

The only systematic effort to address this gap was made in 
\cite{Toth2010SpringerBook,Toth11_LPVBehav}, where  behavioral theory was used to clarify realization theory, concepts of minimality and equivalence classes of various LPV representation forms.
However, the LPV models considered in \cite{Toth2010SpringerBook,Toth11_LPVBehav} assumed non-linear (meromorphic) and \emph{dynamical dependence} of the model parameters on the scheduling variable.
More precisely, the LPV model parameters were assumed to be meromorphic functions of the scheduling variable and its derivatives (in continuous-time), or of the current and future values of the scheduling variable
(discrete-time). As a result, the system theoretic transformations (passing from input-output behavior to state-space representation, transforming a state-space representation to a minimal one, etc.) described in
\cite{Toth2010SpringerBook,Toth11_LPVBehav} introduce LPV models with a dynamic and nonlinear dependence on the parameters. However, for practical applications it is preferable to use LPV models with a static and
affine dependence on the scheduling variable, i.e., LPV models whose system parameters are affine functions of the instantenous value of the scheduling variable.
That is, from a practical point of view it make sense to concentrate on systems theory of LPV models with static and affine dependence. 
In particular, the following fundamental question which directly pops up in the engineering context has remained unanswered: when is it possible to give a simple state-space model with affine static dependence for an identified or modeled LPV system behavior and how to accomplish this realization step with ease. To find an answer to this question is the main motivation of this paper. %


 
 In this paper we present a Kalman-like realization theory for LPV state-space representations with affine static dependence of coefficients, abbreviated as \emph{LPV-SSA representations}.
We will consider both the \emph{discrete-time} (DT) and the \emph{continuous-time} (CT) cases. In particular, we show existence, uniqueness of a special form of equivalent \emph{infinite impulse response representations (IIR)} of systems with such representations both in DT and CT cases. We show that all input-output functions which can be described by an LPV-SSA representation admit an IIR, and conversely, if an input-output function admits an IIR and its 
Hankel matrix  has finite rank, then this input-output function can be represented by an LPV-SSA representation. In this case, the finite rank of the Hankel matrix equals the dimension of a minimal LPV-SSA realization of
the input-output function.
Furthermore, the concept of (state) minimality and Kalman decomposition in terms of observability and reachability is clarified. It is proven that for the LPV-SSA case, state-minimality is equivalent with joint observability and span-reachability. It is shown that the  construction of a minimal LPV-SSA form of an arbitrary LPV input-output function can be always (if it exists) carried out with the application of the Ho-Kalman realization algorithm. 
We also discuss partial LPV-SSA realization for input-output functions.
 Moreover, it is formally proven that all minimal LPV-SSA representations of the same input-output function are isomorphic, and this isomorphism is linear and it does not depend on the scheduling parameter. 
Finally, we show that under some very mild conditions,  minimal LPV-SSA representations are also minimal among the meromorphic LPV state-space representations from \cite{Tot10}.
 The results of this paper could be useful for model reduction and system identification of LPV-SSAs. For example, they could be useful for improved subspace identification algorithm (see \cite{CoxSubspace} for preliminary results), for identifiability analysis, characterization of identifiability, topology of minimal LPV-SSAs, identifiable canonical forms (following the idea of \cite{MP:HSCC2010,Han89,PP10}), for finding conditions for persistence of excitation of LPV-SSAs (following the ideas of \cite{MPLB:Pers})  or  for model reduction using moment matching (see \cite{MertLPVCDC2015} for preliminary results). 

Many of concepts related to those used in this paper have already been published in various works, but without the existence of a coherent connection and underlying formal proofs. 
In particular, the idea of Hankel-matrix has appeared in \cite{Wingerden09,Verdult02,Toth10_SSReal}.  
 The Markov-parameters and the realization algorithm 
were already described in \cite{Toth10_SSReal}. 
In contrast to \cite{Wingerden09,Verdult02,Toth10_SSReal}, in this paper, the Markov-parameters 
and the related Hankel-matrix are defined directly for input-output functions, 
without assuming the existence of a finite dimensional LPV-SSA realization. 
In fact, the finite rank of the Hankel-matrix represents the necessary 
and sufficient condition for the existence of an LPV-SSA realization. 
In addition, we discuss the conditions for the correctness of the realization algorithm
in more details. 
Extended observability and reachability matrices were also presented in \cite{Toth2010SpringerBook,Rugh96book}. 
However, their system-theoretic interpretation as well as the relationship with minimality were not explored. 
Realization theory of more general linear parameter-varying systems was already developed in \cite{Toth2010SpringerBook}, 
the system matrices were allowed to depend on the scheduling parameters in a non-linear way, however, the results 
published in \cite{Toth2010SpringerBook}  do not always imply the ones, for the restricted LPV-SSA case, presented in this paper. Furthermore, the results presented in this paper can be also seen as generalization of system theoretical results available for linear switched systems \cite{Pet07,Pet11,Pet12,PetCocv11}. 
The current paper is partially based on \cite{PM12}. With respect to \cite{PM12}, the main differences are as follows. First, \cite{PM12} presents the results without proofs. 
Second, \cite{PM12} deals only with the DT case, while the extension of the results to  CT case is challenging and technically more involved than the DT case.  
Finally, the exposition has been improved and simplified in comparison to \cite{PM12}.  The technical report
\cite{MPGM:LPVTech} differs from \cite{PM12} only in the presence of some sketches of the proofs for the results of \cite{PM12}. 

The paper is organized as follows: In Section \ref{para:formulation}, basic notions and concepts are introduced, which is followed, in Section \ref{sec:pre}, by the definition of SSA representations, input-output functions, equivalence and minimality in the considered LPV context.  In Section \ref{para:main}, the main results of the paper in terms of existence, uniqueness and convergence of SSA inducing impulse response representations and the corresponding concepts of Hankel matrix and SSA realization theory are explained. For the sake of readability, all proofs are collected in Appendix \ref{para:proof}. 

\section{Notation}\label{para:formulation}

The following notation is used: for a (possibly infinite) set $X$, denote by $\mathcal{S}(X)$ the set of finite sequences generated from $X$, \emph{i.e.}, each $s \in \mathcal{S}(X)$ is of the form $s=\zeta_{1}\zeta_{2} \cdots \zeta_{k}$ with $\zeta_1,\zeta_2,\ldots,\zeta_k \in X$, $k\in\mathbb{N}$. $|s|$ denotes the length of the sequence $s$, while for $s,r \in \mathcal{S}(X)$, $sr\in \mathcal{S}(X)$ corresponds the concatenation operation.  The symbol $\varepsilon$ is used for the empty sequence and $|\varepsilon|=0$ with $s\varepsilon=\varepsilon s=s$. Furthermore,  $X^{\mathbb{N}}$ denotes the set of all functions of the form $f:\mathbb{N} \rightarrow X$. For each $j=1,\ldots,m$, $e_j$ is the $j^\mathrm{th}$ \emph{standard basis} in $\mathbb{R}^{m}$.
Furthermore, let $\mathbb{I}_{s_1}^{s_2}=\{s\in\mathbb{Z}\mid s_1\leq s\leq s_2\}$ be an index set.



%

Let $\mathbb{T}=\mathbb{R}_{0}^{+}=[0,+\infty)$ be the time axis in the  \emph{continuous-time} (CT) case and $\mathbb{T}=\mathbb{N}$ in the  \emph{discrete-time} (DT) case. Denote by $\xi$ the differentiation operator $\frac{d}{dt}$ (in CT) and the forward time-shift
operator $q$ (in DT), \emph{i.e.},
if $z: \mathbb{T} \rightarrow \mathbb{R}^{n}$,  then
$(\xi z)(t)=\frac{d}{dt} z(t)$, if $\mathbb{T}=\mathbb{R}_{0}^{+}$, and
$(\xi z)(t)=z(t+1)$, if $\mathbb{T}=\mathbb{N}$.
As usual, denote by $\xi^k$ the $k$-fold application of $\xi$, i.e. for any $z:\mathbb{T} \rightarrow \mathbb{R}^n$,
$\xi^{0} z=z$, and $\xi^{k+1} z = \xi (\xi^k z)$ for all $k \in \mathbb{N}$.
Both for CT and DT, for any $\tau \in \mathbb{T}$, define the
time shift operator $q^{\tau}$ as follows: for any $f:\mathbb{T} \rightarrow \mathbb{R}^n$, $q^{\tau}f:\mathbb{T} \rightarrow \mathbb{R}^n$ is
defined by $(q^{\tau}f)(t)=f(t+\tau)$, $t \in \mathbb{T}$.

 A function $f:=\mathbb{R}_{0}^{+} \rightarrow \mathbb{R}^{n}$ is called
\emph{piecewise-continuous}, if $f$ has finitely many points of
discontinuity on any compact subinterval of $\mathbb{R}_{0}^{+}$ and, at any point of
discontinuity, the left-hand and right-hand side limits of $f$ exist and 
are finite.  We denote by $\mathcal{C}_\mathrm{p}(\mathbb{R}_{0}^{+},\mathbb{R}^n)$ the set of all $n$-dimensional
\emph{piecewise-continuous functions} of the above form. The notation
$\mathcal{C}_\mathrm{a}(\mathbb{R}_{0}^{+},\mathbb{R}^n)$ designates the set of all $n$-dimensional \emph{absolutely
  continuous functions}  \cite{LangRealAnalysisBook}.

Recall from \cite{WebsterBook} the following notions on affine hulls and affine bases. 
Recall that $b \in \mathbb{R}^n$ is an affine combination of $a_1,\ldots,a_N \in \mathbb{R}^n$, if $b=\sum_{i=1}^{N} \lambda_i a_i$ for some $\lambda_1,\ldots,\lambda_N \in \mathbb{R}$,$\sum_{i=1}^{N} \lambda_i=1$.
The \emph{affine hull} 
$\mathrm{Aff}~ A$ of a set $A$ is the set of affine combinations of elements of $A$. 
The vectors $b_1,\ldots,b_{m} \in \mathbb{R}^n$
are said to be affinely independent if for every $j=1,\ldots,m$, $b_j$ cannot be expressed as an affine combination of 
$\{b_i\}_{i=1, i \ne j}^{m}$. The vectors $b_1,\ldots,b_m$ are an affine basis of $\mathbb{R}^{n}$ if $m=n+1$,
$b_1,\ldots, b_{n+1}$ are affinely independent and $\mathrm{Aff}~ \{b_1,\ldots,b_{n+1}\}=\mathbb{R}^n$. 
\section{Preliminaries} \label{sec:pre}

In this paper, we consider the class of LPV systems that have LPV \emph{state-space} (SS) representations with \emph{affine} linear dependence on the \emph{scheduling variable}. We use the abbreviation
LPV-SSA to denote this subclass of state-space representations, defined as
\begin{equation}\label{equ:alpvss}
  \Sigma \ \left\{
  \begin{array}{lcl}
   \xi x (t) &=& A(p(t)) x(t) + B(p(t)) u(t), \\
   y(t)  &=& C(p(t)) x(t) + D(p(t))u(t),
  \end{array}\right.
\end{equation}
where $x(t) \in \mathbb{X}=\mathbb{R}^{n_\mathrm{x}}$ is the state variable, $y(t) \in \mathbb{Y}=\mathbb{R}^{n_\mathrm{y}}$ is the (measured) output, $u (t) \in \mathbb{U}=\mathbb{R}^{n_\mathrm{u}}$ represents the input signal and $p(t) \in \mathbb{P}\subseteq \mathbb{R}^{n_\mathrm{p}}$ is the so called \emph{scheduling variable} of the system represented by $\Sigma$, and 
 \begin{equation}\label{equ:affdep}
\begin{split} 
  A(\mathbf{p}) = A_0 + \sum_{i=1}^{\QNUM} A_i \mathbf{p}_i \mbox{, \ \  } 
  B(\mathbf{p}) = B_0 + \sum_{i=1}^{\QNUM} B_i \mathbf{p}_i,   \\
  C(\mathbf{p}) = C_0 + \sum_{i=1}^{\QNUM} C_i \mathbf{p}_i \mbox{, \ \ } 
  D(\mathbf{p}) = D_0 + \sum_{i=1}^{\QNUM} D_i \mathbf{p}_i,
\end{split}
\end{equation}
for every $\mathbf{p}=[\begin{array}{ccc} \mathbf{p}_1& \ldots & \mathbf{p}_{\QNUM}\end{array}]^\top \in \mathbb{P}$, with constant matrices $A_i \in \mathbb{R}^{\NX \times \NX}$,
$B_i \in \mathbb{R}^{\NX \times \NU}$, $C_i \in \mathbb{R}^{\NY \times \NX}$ and
$D_i \in \mathbb{R}^{\NY \times \NU}$ for all $i\in\mathbb{I}_0^{n_\mathrm{p}}$. 
\emph{It is assumed that $\mathrm{Aff}~ \mathbb{P}=\mathbb{R}^{n_\mathrm{p}}$, \emph{i.e.}, $\mathbb{P}$ contains an affine basis of $\mathbb{R}^{n_\mathrm{p}}$}, see Section \ref{para:formulation} or \cite{WebsterBook}
for the definition of affine span and affine basis. 
According to the LPV modeling concept, $p$ corresponds to varying-operating conditions, nonlinear/time-varying dynamical aspects and /or external effects influencing the plant behavior and it is allowed to vary in the  set $\mathbb{P}$, see \cite{Toth2010SpringerBook} for details. In the sequel, we will often use the shorthand notation \vspace{-1mm}
\begin{equation*}
   \Sigma=(\mathbb{P},\left\{ A_i, B_i, C_i, D_i \right\}_{i=0}^{\QNUM})
\end{equation*}
to denote an LPV-SSA representation of the form \eqref{equ:alpvss} and use  $\dim{( \Sigma )}=n_\mathrm{x}$ to denote its state dimension. 

 By a solution  of $\Sigma$ we mean a tuple of trajectories $(x,y,u,p)\in(\mathcal{X},\mathcal{Y},\mathcal{U},\mathcal{P})$ satisfying \eqref{equ:alpvss} for almost all $t \in \mathbb{T}$ in CT case, and for all $t \in \mathbb{T}$ in DT, where in CT, $\mathcal{X}=\mathcal{C}_\mathrm{a}(\mathbb{R}_{0}^{+},\mathbb{X}), \mathcal{Y}=\mathcal{C}_\mathrm{p}(\mathbb{R}_{0}^{+},\mathbb{Y}), \mathcal{U}=\mathcal{C}_\mathrm{p}(\mathbb{R}_{0}^{+},\mathbb{U}), \mathcal{P}=\mathcal{C}_\mathrm{p}(\mathbb{R}_{0}^{+},\mathbb{P})$, and in DT $\mathcal{X}=\mathbb{X}^\mathbb{N}, \mathcal{Y}=\mathbb{Y}^{\mathbb{N}}, \mathcal{U}=\mathbb{U}^{\mathbb{N}},\mathcal{P}=\mathbb{P}^{\mathbb{N}}$.

\begin{rem}[Zero initial time]
Notice that without loss of generality, the solution trajectories in CT can be considered on the half line $\mathbb{R}_0^+$ with
$t_\mathrm{o}=0$. Indeed, if $(x,y,u,p)$ satisfy 
\eqref{equ:alpvss}, then 
$(q^\tau x, q^\tau y, q^\tau u, q^\tau p)$
satisfies \eqref{equ:alpvss} for any $\tau\in\mathbb{R}$ (see \cite{Toth11_LPVBehav}).
Here $q^{\tau}$ is the shift operator defined in Section \ref{para:formulation}.
\end{rem}

Note that for any input and scheduling signal  $(u,p)\in\mathcal{U}\times \mathcal{P}$ and any initial state $x_{\mathrm o} \in \mathbb{X}$ , there exists a \emph{unique} pair $(y,x)\in\mathcal{Y}\times \mathcal{X}$ such that $(x,y,u,p)$ is a solution of  \eqref{equ:alpvss} and $x(0)=x_{\mathrm o}$,
see \cite{Toth2010SpringerBook}.
 That is, the dynamics of $\Sigma$ are thus driven by the inputs $u \in \mathcal{U}$ as well as the scheduling variables $p \in \mathcal{P}$.
This allows to define \emph{input-to-state}  and \emph{input-output}  functions as follows.
\begin{definition}[IS and IO functions]
  Let $x_\mathrm{o} \in \X$ be an initial state of $\Sigma$. Define the functions 
\begin{subequations}
\begin{align}
\mathfrak{X}_{\Sigma,x_\mathrm{o}} &:   \mathcal{U} \times \mathcal{P}  \rightarrow \mathcal{X}, \\
\mathfrak{Y}_{\Sigma,x_\mathrm{o}} &:   \mathcal{U} \times \mathcal{P}  \rightarrow \mathcal{Y}, \
\end{align}
\end{subequations}
such that for any $(x,y,u,p) \in \mathcal{X} \times \mathcal{Y} \times \mathcal{U} \times \mathcal{P}$,
$x=\mathfrak{X}_{\Sigma,x_\mathrm{o}}(u,p)$ and 
$y=\mathfrak{Y}_{\Sigma,x_\mathrm{o}}(u,p)$ holds if and only if
$(x,y,u,p)$ is a solution of \eqref{equ:alpvss} and $x(0)=x_{\mathrm o}$.
The function $\mathfrak{X}_{\Sigma,x_{\mathrm o}}$ is called the input-to-state function of $\Sigma$ induced by the initial state $x_{\mathrm o}$, and the function
$\mathfrak{Y}_{\Sigma,x_{\mathrm o}}$ is called the  input-to-output function $\Sigma$ induced by $x_{\mathrm o}$.
\end{definition}
Prompted by the definition above, we formalize potential input-output behavior of LPV-SSA representations 
as functions of the form
   \begin{equation}\label{equ:iofunction}
     \mathfrak{F} : \mathcal{U} \times \mathcal{P} \rightarrow \mathcal{Y}.
   \end{equation}
Note that an input-output map of any LPV-SSA representation is of the above form. However, not all maps of the form \eqref{equ:iofunction} arise as
input-output maps of some LPV-SSA representation.
%
\begin{definition}[Realization]
\label{def:real}
    The LPV-SSA representation $\Sigma$ is a realization of an input-output function $\mathfrak{F}$ of the form \eqref{equ:iofunction} from the initial state $x_{\mathrm o} \in \mathbb{X}$, 
  if  $\mathfrak{F}=\mathfrak{Y}_{\Sigma,x_\mathrm{o}}$; $\Sigma$ is said to be a realization of $\mathfrak{F}$, if there exist an initial
  state $x_{\mathrm o} \in \mathbb{X}$ of $\Sigma$, such that $\Sigma$ is a realization of $\mathfrak{F}$ from $x_{\mathrm o}$. 

\end{definition}
Similarly to \cite{Pet07,PetCocv11,Pet12}, the results of this paper could be extended to families of input-output functions with multiple initial states. However, in order to keep the notations simple, we only deal with systems having one initial state.
\begin{definition}[Input-output equivalence]
Two LPV-SSA representations  $\Sigma$ and $\Sigma^\prime$ are said to be weakly input-output equivalent w.r.t.\ the initial states $x\in\mathbb{R}^{n_\mathrm{x}}$ and $x^\prime \in\mathbb{R}^{n_{\mathrm{x}^\prime}}$, 
if $\mathfrak{Y}_{\Sigma,x}=\mathfrak{Y}_{\Sigma^\prime,x^\prime}$. They are called strongly input-output equivalent, if for all $x\in\mathbb{R}^{n_\mathrm{x}}$ there is a $x^\prime \in\mathbb{R}^{n_{\mathrm{x}^\prime}}$ such that $\mathfrak{Y}_{\Sigma,x}=\mathfrak{Y}_{\Sigma^\prime,x^\prime}$, and vice versa, for any  $x^\prime \in\mathbb{R}^{n_\mathrm{x}}$ there is a $x \in\mathbb{R}^{n_{\mathrm{x}^\prime}}$ such that  $\mathfrak{Y}_{\Sigma,x}=\mathfrak{Y}_{\Sigma^\prime,x^\prime}$.
\end{definition}
\begin{remark}[IO functions vs behaviors]
\label{rem:behavior}
So far, we formalized the input-output behavior of the system represented by $\Sigma$ as an input-output
 function induced by some initial state. 
 Another option is to use a behavioral approach, where the input-output (manifest) behavior of a given LPV-SSA $\Sigma$ is defined as
 \begin{multline} \mathfrak{B}(\Sigma)=\bigl\{(y,u,p) \in \mathcal{Y} \times \mathcal{U} \times \mathcal{P} \mid \exists x \in \mathcal{X} \\  \mbox{ s.t. } (x,y,u,p) \mbox{ satisfies \eqref{equ:alpvss}}
    \bigr\}.
 \end{multline}
Then, a $\Sigma$ realizes a $\mathfrak{B}\subseteq \mathcal{Y}\times \mathcal{U} \times \mathcal{P}$, if and only if $\mathfrak{B}=\mathfrak{B}(\Sigma)$.
Notice that $\mathfrak{B}(\Sigma)=\{ (y,u,p) \mid \exists x \in \X: \mathfrak{Y}_{\Sigma,x} (u,p)=y\}$, i.e., $\mathcal{B}(\Sigma)$ is just the union of graphs of the input-output functions $\mathfrak{Y}_{\Sigma,x}$.
 This prompts us to consider the following definition. Let $\Phi$ be a 
 set of input-output functions of the form 
 $\mathfrak{F}:\mathcal{U} \times \mathcal{P} \rightarrow \mathcal{Y}$. 
 Similarly to \cite{PetCocv11,Pet06}, we say that an LPV-SSA $\Sigma$ is a
 realization of $\Phi$, if for every $\mathfrak{F} \in \Phi$ there exists a state $x$ of $\Sigma$
 such that $\mathfrak{F}=\mathfrak{Y}_{\Sigma,x}$. Definition \ref{def:real} represents a 
 particular case of the definition above with $\Phi=\{\mathfrak{F}\}$.
The results of the paper can be extended to
 include the definition above, similarly to \cite{PetCocv11,Pet06}.
\end{remark}
Next, we define reachability and observability of LPV-SSAs.
\begin{definition}[Reachability \& observability] \label{def:reachobs}
  Let $\Sigma$ be an LPV-SSA representation of the form~\eqref{equ:alpvss}. We say that $\Sigma$ is \emph{(span) reachable} from an initial state $x_\mathrm{o} \in \X$, if  $\mathrm{Span}\{ \mathfrak{X}_{\Sigma,x_\mathrm{o}}(u,p)(t) \mid (u,p) \in \mathcal{U} \times \mathcal{P}, t \in \mathbb{T} \}=\mathbb{X}$. We say that $\Sigma$ is observable if, for any two states $x_1 \in \X$ and $x_2 \in \X$, $\mathfrak{Y}_{\Sigma,x_1} = \mathfrak{Y}_{\Sigma,x_2}$ implies $x_1 = x_2$.
\end{definition}
 Notice that, in this definition, observability means that for any two distinct states of the system, the resulting outputs will differ from each other for some input and scheduling signals. Notice that while span-reachability depends on the choice of the
 initial state $x_\mathrm{o}$, observability does not. Furthermore, these concepts of reachability and observability are strongly related to the extended controllability and observability matrices used in subspace-based identification of LPV-SSA models \cite{Win08}. 

As explained previously, the relation between two realizations of the same I-O function is of interest in this paper. Thus, it is essential to recall the notion of isomorphism for an LPV-SSA model.
  \begin{definition}[State-space isomorphism]
 \label{isomorphism}
   Consider two LPV-SSA representations $\Sigma=(\mathbb{P}, \left\{ A_i, B_i, C_i, D_i \right\}_{i=0}^{\QNUM})$  and $\Sigma^\prime=(\mathbb{P}, \left\{ A_i^{'}, B_i^{'}, C_i^{'}, D_i^{'} \right\}_{i=0}^{\QNUM})$ with $\dim(\Sigma)=\dim(\Sigma^\prime)=n_\mathrm{x}$. A nonsingular matrix $T \in \mathbb{R}^{n_\mathrm{x} \times n_\mathrm{x}}$ is said to be an isomorphism from $\Sigma$ to $\Sigma^\prime$, if
\begin{align}
 \label{equ:isomorphism}
  A^\prime_{i} T &= T A_i & B^\prime_{i} &= T B_i & C^\prime_{i}T &= C_i & D^\prime_i &=D_i,
\end{align}
for all $i\in\mathbb{I}_0^{n_\mathrm{p}}$. 
  \end{definition}
Next, we define minimality of an LPV-SSA representation: 
\begin{definition}[State-minimal realization]
\label{def:min}
 Let $\mathfrak{F}$ be an input-output function. An LPV-SSA $\Sigma$ is a (state) minimal realization of $\mathfrak{F}$, if
 \begin{itemize}
 \item $\exists x_\mathrm{o}\in\mathbb{X}$ such that $\mathfrak{Y}_{\Sigma,x_\mathrm{o}}= \mathfrak{F}$.
\item for ever LPV-SSA representation $\Sigma^{\prime}$  which is a realization of $\mathfrak{F}$, $\dim{ ( \Sigma ) } \leq \dim{ ( \Sigma^\prime ) }$.
 \end{itemize}
 We say that $\Sigma$ is \emph{minimal} w.r.t. the initial state $x_\mathrm{o}\in\mathbb{X}$,  if $\Sigma$ is a minimal realization of the input-output function $\mathfrak{Y}_{\Sigma,x_\mathrm{o}}$. 
\end{definition}
 
 Note that due to the linearity of the system class, we can assume that $D(\cdot)\equiv 0$ without any loss of generality regarding the concepts of reachability, observability and minimality.  Therefore, in the sequel, unless stated otherwise, we will assume that
 $D_i=0$ for all $i\in\mathbb{I}_0^{n_\mathrm{p}}$. Rewriting the results of the paper
 for $D(\cdot)\not\equiv 0$ is an easy exercise and it is left to
 the reader.
%

\section{Main results}
\label{para:main}
 In this section, we present the main results of the paper. First, we formally define the notion of an \emph{impulse response representation} (IIR) of
 an input-output function $\mathfrak{F}:\mathcal{U} \times \mathcal{P} \rightarrow \mathcal{Y}$ both in CT and DT. We then show  that all input-output functions which are realizable as a LPV-SSA representation admit such an IIR.
 This is followed by the establishment of a  Kalman-like realization theory (relationship between input-output functions and LPV-SSA representations, rank conditions for the Hankel matrix,  minimality of LPV-SSA representations, uniqueness (up to isomorphism) of minimal LPV-SSA representations). 
 Finally, we present a minimization and Kalman-decomposition algorithms, we discuss the correctness  of Kalman-Ho algorithm of \cite{TAW12}, and we conclude by clarifying 
 the relationship between the minimality concepts of the current paper and that of \cite{Tot10}. 

\subsection{Impulse response representation}
\label{para:io}

First, we introduce a convolution based representation of an input-output function. Let $p_q$ denote the $q^\mathrm{th}$ entry of the vector $p\in\mathbb{R}^{n_\mathrm{p}}$ if $q\in\mathbb{I}_1^{n_\mathrm{p}}$ and let $p_0=1$. Consider the following notation to handle the resulting $p$-dependence of the Markov coefficients:

%
\begin{definition}
\label{def:volterra}
 For a given index sequence $s \in \mathcal{S}(\mathbb{I}_{0}^{n_\mathrm{p}})$, time moments $t,\tau \in \mathbb{T}$, $\tau \le t$, and any scheduling trajectories  $p \in \mathcal{P}$, define 
 the so-called \emph{sub-Markov dependence} $(w_{s}\diamond p)(t,\tau)$ as follows:
 \begin{itemize}
 \item{\textbf{Continuous-time: }} For the empty sequence, $s=\epsilon$, $(w_{\epsilon}\diamond p)(t,\tau)=1$.  If $s=s_1s_2\cdots s_n$ for some $s_1,s_2,\ldots,s_n\in\mathbb{I}_{0}^{n_\mathrm{p}}$ and $n > 0$, 
then \vspace{-3mm}\begin{subequations}
 \begin{equation*}
  \begin{split}
    & (w_{s}\diamond p)(t,\tau)= \int_{\tau}^{t} p_{s_n}(\delta ) \cdot (w_{s_1\cdots s_{n-1}}\diamond p)(\delta,\tau)\ d\delta= \\
    & \int_{\tau}^t p_{s_n}(\tau_n) (\int_{\tau}^{\tau_n} p_{s_{n-1}}(\tau_{n-1}) (\int_{\tau}^{\tau_{n-1}} \cdots )d\tau_{n-1})d\tau_n   
 \end{split}
 \end{equation*}
 \item{\textbf{Discrete-time:}} If the sequence $s$ is of the form $s=s_1s_2 \cdots s_n$, for some $s_1,s_2,\ldots, s_n \in\mathbb{I}_{0}^{n_\mathrm{p}}$ and  $n=t-\tau+1$,  then 
 \begin{equation*}(w_{s}\diamond p)(t,\tau) =p_{s_1}(\tau)p_{s_2}(\tau+1) \cdots p_{s_n}(t) ,\end{equation*}\end{subequations}
 else $(w_{s}\diamond p)(t,\tau)=0$.
%
%
%
 \end{itemize}
 \end{definition}
\begin{example}
  In order to illustrate the notation above, consider the case when $\QNUM=1$ and take $s=0101$, $|s|=n=4$.  Then, for DT
  \( 
      (w_{s}\diamond p)(5,2)=p_0(2)p_1(3)p_0(4)p_1(5)=p(3)p(5)
  \).
  For CT,
  \(
     (w_{s}\diamond p)(5,2)= \int_2^{5} p_1(s_1) (\int_2^{s_1} p_0(s_2)(\int_2^{s_2} p_1(s_3) (\int_2^{s_3} p_0(s_4)ds_4)ds_3)ds_2)ds_1 \), 
  and by using $p_0=1$,  \( (w_s \diamond p)(5,2)=\int_2^5 p(s_1) (\int_2^{s_1} \int_2^{s_2} (s_3-2)p(s_3)ds_2ds_3)ds_1.\)
\end{example}
The IIR of an input-output function is defined as follows.
 \begin{definition}[Impulse response representation]
\label{def:grimarkov}
  Let $\mathfrak{F}$ be a function of the form~\eqref{equ:iofunction}. Then $\mathfrak{F}$  is said to have a \emph{impulse response representation} (IIR)  if there exists a function
  \begin{equation} \label{eq:subM}
    \theta_{\mathfrak{F}}: \mathcal{S}(\mathbb{I}_0^{n_\mathrm{p}}) \mapsto \mathbb{R}^{(\QNUM+1) n_\mathrm{y} \times (n_\mathrm{u}(\QNUM+1)+1)},
  \end{equation} 
  such that,
 \begin{enumerate}
 \item it satisfies an exponential growth condition, i.e., 
  there exist constants $K, R > 0$ such that 
  \begin{equation}
  \label{IIR:growth}
  \forall s \in   \mathcal{S}(\mathbb{I}_0^{n_\mathrm{p}}):  ||\theta_{\mathfrak{F}}(s)||_\mathrm{F} \le K R^{|s|}
  \end{equation}
  where $\|.\|_\mathrm{F}$ denotes the Frobenius norm;
 \item for every $p \in \mathcal{P}$, 
 there exist functions $g_{\mathfrak{F}} \diamond p:\mathbb{T} \rightarrow \mathbb{R}^{n_\mathrm{y}}$ and
 $h_{\mathfrak{F}} \diamond p:\{(\tau,t) \in \mathbb{T} \times \mathbb{T} \mid \tau \le t\} \rightarrow \mathbb{R}^{n_\mathrm{y} \times n_\mathrm{u}}$, such that
 for each $(u,p) \in \mathcal{U} \times \mathcal{P}$, $t \in \mathbb{T}$, \vspace{-2mm}
 \begin{subequations}\label{equ:convol}
  \begin{equation}
  \mathfrak{F}(u,p)(t)=(g_{\mathfrak{F}} \diamond p)(t) + \int_0^{t} (h_{\mathfrak{F}} \diamond p)(\delta,t)\cdot u(\delta)\ d\delta,
   \end{equation}
  in CT and
   \begin{equation}
  \mathfrak{F}(u,p)(t)=(g_{\mathfrak{F}} \diamond p)(t) + \sum_{\delta=0}^{t-1} (h_{\mathfrak{F}} \diamond p)(\delta, t)\cdot u(\delta),
%
%
 \end{equation}
 \end{subequations}
  for DT. Moreover, 
  for any $i,j \in \mathbb{I}_0^{n_\mathrm{p}}$, let $\eta_{i,\mathfrak{F}}(s) \in \mathbb{R}^{n_\mathrm{y} \times 1}$ and
 $\theta_{i,j,\mathfrak{F}}(s) \in \mathbb{R}^{n_\mathrm{y} \times n_\mathrm{u}}$ be such that
 \begin{equation*} 
\theta_{\mathfrak{F}}(s)=\begin{bmatrix}
            \eta_{0,\mathfrak{F}}(s) & \theta_{0,0,\mathfrak{F}}(s) & \cdots & \theta_{0,\QNUM,\mathfrak{F}}(s) \\ 
            \eta_{1,\mathfrak{F}}(s) & \theta_{1,0,\mathfrak{F}}(s) & \cdots & \theta_{1,\QNUM,\mathfrak{F}}(s) \\ 
            \vdots       & \vdots       & \cdots & \vdots  \\ 
            \eta_{\QNUM,\mathfrak{F}}(s) & \theta_{\QNUM,0,\mathfrak{F}}(s) & \cdots & \theta_{\QNUM,\QNUM,\mathfrak{F}}(s) \\ 
           \end{bmatrix}.
  \end{equation*}
Then $g_{\mathfrak{F}} \diamond p$ and $h_{\mathfrak{F}} \diamond p$ can be expressed via $\theta_{\mathfrak{F}}$ as
 \begin{equation}
  \label{IIRsum:CT}
   \begin{split}
    & (g_{\mathfrak{F}} \diamond p)(t)= \\
    & \sum_{i \in \mathbb{I}_{0}^{n_\mathrm{p}}} \sum_{ s \in \mathcal{S}(\mathbb{I}_{0}^{n_\mathrm{p}})}\!\!\! p_i(t) \eta_{i,\mathfrak{F}}(s)\cdot  (w_s\diamond p)(t,0),\\
   & (h_{\mathfrak{F}}\diamond p)(\delta,t)=  \\
   & \sum_{i,j \in \mathbb{I}_{0}^{n_\mathrm{p}}} \sum_{s \in \mathcal{S}(\mathbb{I}_{0}^{n_\mathrm{p}})}\!\!\! \theta_{i,j,\mathfrak{F}}(s) p_{i}(t)p_{j}(\delta) \cdot (w_{s}\diamond p)(t,\delta), 
 \end{split}
 \end{equation}
 in CT and, in DT,
  \begin{equation}
  \label{IIRsum:DT}
   \begin{split}
   & (g_{\mathfrak{F}}\diamond p)(t)=\\
   &  \sum_{\substack{ i \in \mathbb{I}_{0}^{n_\mathrm{p}}\\ s \in \mathcal{S}(\mathbb{I}_{0}^{n_\mathrm{p}})} }\!\!\! \eta_{i,\mathfrak{F}}(s)p_{i}(t)\cdot  (w_s\diamond p)(t-1,0),\\
   & (h_{\mathfrak{F}} \diamond p)(\delta,t)=\\
   & \sum_{\substack{ i,j \in \mathbb{I}_{0}^{n_\mathrm{p}}\\ s \in \mathcal{S}(\mathbb{I}_{0}^{n_\mathrm{p}}) }}\!\!\! \theta_{i,j,\mathfrak{F}}(s) p_{i}(t)p_{j}(\delta) \cdot (w_{s}\diamond p)(t-1,\delta+1). 
 \end{split}
 \end{equation}
 \end{enumerate}
 If $\mathfrak{F}$ is clear from the context, then we drop the subscript $\mathfrak{F}$ and we denote $\theta_{\mathfrak{F}}, \theta_{i,j,\mathfrak{F}}, \eta_{i,\mathfrak{F}}, i,j \in \AQ,  g_{\mathfrak{F}}\diamond p, h_{\mathfrak{F}}\diamond p$ by $\theta, \theta_{i,j},\eta_i$ $g \diamond p$ and $h \diamond p$, respectively. 
 The values of the function $\theta_{\mathfrak{F}}$ will be called the \emph{sub-Markov parameters} of  $\mathfrak{F}$.
 \end{definition}\vskip 4mm
  \vspace{-10pt}
Note that in DT, the sums appearing on the right-hand side of  \eqref{IIRsum:DT} are actually finite sums, as 
for $|s| > t$, $w_s \diamond p = 0$. 
In the case of CT, however, the right-hand side of \eqref{IIRsum:CT} is an infinite sum, which raises the question of its convergence.
This question is addressed below.
\begin{lemma}
\label{lemma:IIRcon}
 Assume $\theta_{\mathfrak{F}}$ satisfies the growth condition 
 \eqref{IIR:growth} for some $K,R > 0$. Then the infinite sum on the right-hand side of \eqref{IIRsum:CT} is absolutely convergent.
\end{lemma}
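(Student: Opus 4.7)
The plan is to dominate each term in the series by a term of an exponential series, and use the fact that $p$ is piecewise continuous (hence bounded on compact intervals) together with the combinatorial count of index sequences of a given length.

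First, I would fix $t \in \mathbb{T}$ and, for the $g \diamond p$ case work on $[0,t]$, and for the $h \diamond p$ case on $[\delta, t]$. Since $p \in \mathcal{C}_\mathrm{p}(\mathbb{R}_0^+, \mathbb{P})$ is piecewise continuous, it is bounded on any compact subinterval; together with the convention $p_0 \equiv 1$, set
\[
M := \max\Bigl(1,\ \max_{i \in \mathbb{I}_0^{\QNUM}} \sup_{\sigma \in [0,t]} |p_i(\sigma)|\Bigr) < \infty.
\]
Next, I would establish, by induction on $n = |s|$, the bound
\[
|(w_s \diamond p)(t,\tau)| \le \frac{M^{n}\,(t-\tau)^{n}}{n!}
\qquad \text{for all } s \in \mathcal{S}(\mathbb{I}_0^{\QNUM}),\ 0 \le \tau \le t.
\]
The base case $n=0$ is trivial since $(w_\varepsilon \diamond p)(t,\tau) = 1$. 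For the induction step, using the recursive definition
\[
(w_s \diamond p)(t,\tau) = \int_\tau^t p_{s_n}(\delta)\,(w_{s_1 \cdots s_{n-1}} \diamond p)(\delta,\tau)\,d\delta,
\]
one bounds $|p_{s_n}(\delta)| \le M$ and applies the induction hypothesis, then integrates $(\delta-\tau)^{n-1}/(n-1)!$ to obtain $(t-\tau)^n/n!$.

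With this estimate in hand, I would bound the entries $\eta_{i,\mathfrak{F}}(s)$ and $\theta_{i,j,\mathfrak{F}}(s)$ uniformly by the Frobenius norm bound $\|\theta_\mathfrak{F}(s)\|_\mathrm{F} \le K R^{|s|}$, since each is a submatrix of $\theta_\mathfrak{F}(s)$. For the $g \diamond p$ series the norm of the summand indexed by $(i,s)$ with $|s|=n$ is therefore dominated by $M \cdot K R^n \cdot M^n t^n / n!$. The number of sequences $s$ of length $n$ over $\mathbb{I}_0^{\QNUM}$ is $(\QNUM+1)^n$, and there are $\QNUM+1$ values of $i$; grouping by $n$, the total majorant series is
\[
(\QNUM+1) K M \sum_{n=0}^{\infty} (\QNUM+1)^n R^n M^n \frac{t^n}{n!} = (\QNUM+1) K M \cdot \exp\bigl((\QNUM+1) R M t\bigr) < \infty.
\]
An entirely analogous estimate, with an extra factor of $M$ from $p_j(\delta)$ and with $t$ replaced by $t-\delta$, handles the $(h_\mathfrak{F} \diamond p)(\delta,t)$ series. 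By the Weierstrass $M$-test this proves absolute (and in fact uniform on compacts) convergence.

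I do not expect any serious obstacles here: the only nontrivial step is the induction establishing the $1/n!$ decay of $|(w_s \diamond p)(t,\tau)|$, which is the standard Dyson-like estimate for iterated integrals, and it is precisely this factorial decay that compensates both the combinatorial explosion $(\QNUM+1)^n$ of index sequences and the geometric growth $R^n$ in the hypothesis on $\theta_\mathfrak{F}$.
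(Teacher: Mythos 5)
Your proof is correct. It takes a different (more self-contained) route than the paper: the paper proves Lemma \ref{lemma:IIRcon} by recognizing $s \mapsto \eta_{i,\mathfrak{F}}(s)$ and $s \mapsto \theta_{i,j,\mathfrak{F}}(s)$ as generating series in the sense of Fliess (the growth condition \eqref{IIR:growth} being exactly the defining bound for such series), identifying the sums in \eqref{IIRsum:CT} with the Chen--Fliess functionals $F_{\eta_{i,\mathfrak{F}}}[p](t)$ and $F_{\theta_{i,j,\mathfrak{F}}}[q^{\delta}p](t-\delta)$, and then citing the known absolute convergence of such functionals from Isidori and Wang--Sontag. You instead prove the convergence from scratch: the inductive bound $|(w_s\diamond p)(t,\tau)|\le M^{n}(t-\tau)^{n}/n!$ is precisely the classical estimate underlying the cited result, and your bookkeeping of the $(n_\mathrm{p}+1)^{n}$ sequences of length $n$, the submatrix bound $\|\eta_{i,\mathfrak{F}}(s)\|\le\|\theta_{\mathfrak{F}}(s)\|_\mathrm{F}\le KR^{|s|}$, and the resulting exponential majorant are all sound (piecewise continuity of $p$ with finite one-sided limits does guarantee boundedness on compacts, so $M<\infty$). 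What the paper's approach buys is brevity and reusable machinery --- the generating-series identification is exploited again in Lemmas \ref{conv_ser:uniq}, \ref{lemma:gen_series} and \ref{lem:extension} --- while your argument buys independence from the external references and, as a bonus, uniform convergence on compact time intervals via the Weierstrass test.
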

The proof of Lemma \ref{lemma:IIRcon} is presented in Appendix.
Existence of an IIR of $\mathfrak{F}$ implies that $\mathfrak{F}$ is linear in $u$ and can be represented as a convergent
infinite sum of iterated integrals in CT, while, in DT, $\mathfrak{F}$  is a homogeneous polynomial in $\{p_i(t)\}_{i=1}^{n_\mathrm{p}}$.
It is important to  notice that, in CT, using the terminology of
\cite{Isi:Nonlin}, the entries of $g \diamond p$ and $h \diamond p $ correspond to
the generating series defined by the coordinates of 
the functions $s \mapsto \theta_{i,j}(s)p_i(t)p_{j}(\tau)$ and
$s \mapsto \eta_{i}(s)p_{i}(t)$. Furthermore, the above definition of IIRs, in principle, corresponds to a specific case of IIR for general LPV systems defined for the DT case in \cite{Toth2010SpringerBook}. Note that in \cite{Toth2010SpringerBook} the use of an initial condition was avoided by assuming that the input-output function is asymptotically stable. The contribution in the definition proposed in the current paper is twofold: (i) it provides the concept of IIR for the CT case, (ii) it restricts the dependencies of the Markov parameters to the subclass that can results from the series expansion of LPV-SSA representations. As we will see, this will be crucial to decide when it is possible to derive a LPV-SSA realization of an input-output function. In turn, that result provides the basis for system identification with state-space model structures using static dependence only.
\begin{example}
To better explain the meaning of this definition, we demonstrate the underlying constructive mechanism by writing out the formulas explicitly for a single example. Assume that $\mathbb{P}=\mathbb{R}$, 
$\NU=\NY=1$ and let $\mathfrak{F}$ be an input-output function of the form \eqref{equ:iofunction} and assume it has an IIR. Then in DT, using that $p_0(t)=1$ for all $t \in \mathbb{T}$,
\vspace{-10pt}
\[ 
   \begin{split}
    & (h_{\mathfrak{F}} \diamond p)(2,5)=  \\
   & \theta_{0,0,\mathfrak{F}}(00)+ p(4)\theta_{0,0,\mathfrak{F}}(01)+  \cdots + \\
  & p(2)p(5)p(3)\theta_{1,1,\mathfrak{F}}(10)+ p(2)p(5)p(3)p(4)\theta_{1,1,\mathfrak{F}}(11) \\
    & (g_{\mathfrak{F}} \diamond p)(2)=    \eta_{0,\mathfrak{F}}(00)+    p_1(1)\eta_{0,\mathfrak{F}}(01)+     p_1(0)\eta_{0,\mathfrak{F}}(10)+  \\
    & p_1(0)p_1(1)\eta_{0,\mathfrak{F}}(11)+ \cdots + p_1(2)\eta_{1\mathfrak{F}}(00)+  \\
    &p_1(2)p_1(1)\eta_{1,\mathfrak{F}}(01)+ \cdots p_1(2)p_1(0)p_1(1)\eta_{1,\mathfrak{F}}(11). \\
  \end{split}
 \]             
\vspace{-10pt}
  For CT,
  \[
   \begin{split}
    & (h_{\mathfrak{F}} \diamond p)(2,5)=  
       [\theta_{0,0,\mathfrak{F}}(\epsilon) + 3\theta_{0,0,\mathfrak{F}}(0) + \\
        & +\cdots + \theta_{0,0,\mathfrak{F}}(101)\int_2^5 p(s_1)\int_2^{s_1} \int_2^{s_2} p(s_3)ds_3 ds_2ds_1+\cdots ] \\
      &  +  \cdots + \\
      & p(2)p(5)[\theta_{1,1,\mathfrak{F}}(\epsilon) + 3\theta_{1,1,\mathfrak{F}}(0) + \theta_{1,1,\mathfrak{F}}(1)\int_2^{5} p(s)ds + \\
        &  + \cdots + \theta_{1,1,\mathfrak{F}}(101)\int_2^5 p(s_1)\int_2^{s_1} \int_2^{s_2} p(s_3)ds_3 ds_2ds_1+\cdots ] \\
   \end{split}
  \]
  \[ \begin{split}
   & (g_{\mathfrak{F}} \diamond p)(2) = [\eta_{0,\mathfrak{F}}(\epsilon) + 2\eta_{0,\mathfrak{F}}(0) + \eta_{0,\mathfrak{F}}(1)\int_0^{2} p(s)ds + \\
        & +\cdots + \eta_{0,\mathfrak{F}}(101)\int_0^2 p(s_1)\int_0^{s_1} \int_0^{s_2} p(s_3)ds_3 ds_2ds_1+\cdots ] \\
     & + 
      p(2)[\eta_{1,\mathfrak{F}}(\epsilon) + 2\eta_{1,\mathfrak{F}}(0) + \eta_{1,\mathfrak{F}}(1)\int_0^{2} p(s)ds  \\
        &  + \cdots + \eta_{1,\mathfrak{F}}(101)\int_0^2 p(s_1)\int_0^{s_1} \int_0^{s_2} p(s_3)ds_3 ds_2ds_1+\cdots ].  \\ 
  \end{split}
  \]
  That is, in DT,  $(h_{\mathfrak{F}} \diamond p)(2,5)$ is a polynomial of $p(2),p(3), p(4),p(5)$, and the degree of $p(2),p(3),p(4),p(5)$ in each monomial is at most one. Moreover, 
  $\theta_{i,j,\mathfrak{F}}(s_1s_2)$, for each $i,j,s_1, s_2 \in \{0,1\}$, are the coefficients of this polynomial. In particular, only the components of the sub-Markov parameters 
   the form $\theta_{\mathfrak{F}}(s)$, with $s$ being of length $2$, occur in  $(h_{\mathfrak{F}} \diamond p)(2,5)$.
   In contrast, in CT, $(h_{\mathfrak{F}} \diamond p)(2,5)$ is an infinite sum of iterated integrals of $p$, all the components of the form $\theta_{i,j,\mathfrak{F}}(s)$, $i,j=0,1$, with $s$ being a 
   sequence of arbitrary length,  occur in 
   the expression for  $(h_{\mathfrak{F}} \diamond p)(2,5)$. The picture for $(g_{\mathfrak{F}} \diamond p)(2)$ is analogous. 
\end{example}


Recall that for LTI systems, there is a one-to-one correspondence between input-output functions and IIRs (see, e.g., \cite{Kailath80}). A similar result holds for those  functions of the form \eqref{equ:iofunction} which admit an IIR.
\begin{lemma}[Uniqueness of the IIR]\label{lem:extension}
  If an input-output function $\mathfrak{F}$  of the form \eqref{equ:iofunction} has an IIR, then the function $\theta_{\mathfrak{F}}$ is uniquely determined by $\mathfrak{F}$, i.e., if $\hat{\mathfrak{F}}:\mathcal{U} \times \mathcal{P} \rightarrow \mathcal{Y}$
  is another input-output function, which admits an IIR, then 
  \[ \mathfrak{F}=\hat{\mathfrak{F}}\ \iff\ \theta_\mathfrak{F} = \theta_{\hat{\mathfrak{F}}} .\]
 Moreover, there exists a unique extension $\mathfrak{F}_\mathrm{e}$ of  $\mathfrak{F}$ to $\mathcal{U} \times \mathcal{P}_{\mathrm e}$, where 
 $\mathcal{P}_\mathrm{e}=\mathcal{C}_\mathrm{p}(\mathbb{R}_0^+,\mathbb{R}^{\QNUM})$ in CT or 
 $\mathcal{P}_\mathrm{e}=(\mathbb{R}^{\QNUM})^{\mathbb{N}}$ in DT.
 The extension $\mathfrak{F}_\mathrm{e}$ also admits an IIR and $\theta_\mathfrak{F}=\theta_{\mathfrak{F}_\mathrm{e}}$.
 \end{lemma}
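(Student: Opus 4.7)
The overall plan is to prove uniqueness of $\theta_\mathfrak{F}$ by showing that the sub-Markov parameters can be recovered explicitly from the values of $\mathfrak{F}$ on $\mathcal{U}\times\mathcal{P}$, and then to construct $\mathfrak{F}_\mathrm{e}$ by applying the IIR formulas \eqref{IIRsum:CT}--\eqref{IIRsum:DT} combined with \eqref{equ:convol} to arbitrary $p\in\mathcal{P}_\mathrm{e}$. The extension is well-defined: the CT series is absolutely convergent by Lemma \ref{lemma:IIRcon} and the DT sum is finite, so $\mathfrak{F}_\mathrm{e}$ obviously admits an IIR with the same $\theta_\mathfrak{F}$. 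Uniqueness of $\mathfrak{F}_\mathrm{e}$ among extensions admitting an IIR then follows from the uniqueness of the sub-Markov parameters applied on the enlarged domain, for which the affine-span hypothesis is automatic. Thus the real work is the recovery-from-$\mathfrak{F}$ step.

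First I would separate the free response from the convolution term: since \eqref{equ:convol} is affine in $u$, we recover $(g\diamond p)(t)=\mathfrak{F}(0,p)(t)$ and, in DT, $(h\diamond p)(\delta,t)e_k=\mathfrak{F}(e_k\mathbf{1}_{\{\delta\}},p)(t)-(g\diamond p)(t)$ for each standard basis vector $e_k\in\mathbb{R}^{n_\mathrm{u}}$; in CT the same information is extracted by a short-pulse approximation, using pulses of width $\varepsilon$ centered at $\delta$ and height $e_k/\varepsilon$ and taking $\varepsilon\downarrow 0$, which is legitimate because $\delta\mapsto(h\diamond p)(\delta,t)$ is piecewise continuous. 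Hence everything reduces to showing that $\theta_{\mathfrak{F}}$ is uniquely determined by the functions $p\mapsto(g\diamond p)(t)$ and $p\mapsto(h\diamond p)(\delta,t)$ for $p\in\mathcal{P}$ and all admissible $t,\delta$.

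In the DT case, for each fixed $t$ and $\delta$, the formulas \eqref{IIRsum:DT} are finite sums, and they exhibit $(g\diamond p)(t)$ and $(h\diamond p)(\delta,t)$ as multilinear polynomials in the block variables $p(0),\ldots,p(t)\in\mathbb{R}^{n_\mathrm{p}}$ whose monomials are in bijective correspondence with the index tuples $(i,s)$ or $(i,j,s)$ of the appropriate length, the coefficients being exactly the sub-Markov parameters. Uniqueness then reduces to the algebraic lemma that a multilinear polynomial in $p(0),\ldots,p(t)\in\mathbb{R}^{n_\mathrm{p}}$ vanishing on $\mathbb{P}^{t+1}$ vanishes identically; this I would prove by induction on $t$, using in the base case that an affine function on $\mathbb{R}^{n_\mathrm{p}}$ which vanishes on an affine basis contained in $\mathbb{P}$ is zero (Section \ref{para:formulation}), and in the inductive step freezing one block $p(\tau)\in\mathbb{P}$ and invoking the hypothesis on the remaining variables. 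The CT case follows the same outline after replacing the finite multilinear polynomial by the Chen--Fliess-type linear independence of iterated integrals $\{(w_s\diamond p)(t,\tau)\}_s$ combined with the pointwise prefactors $p_i(t),p_j(\delta)$, which can in turn be reduced to the DT multilinear case by testing on piecewise-constant scheduling trajectories with carefully chosen breakpoints.

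The step I expect to be the main obstacle is the CT linear-independence of the iterated integrals with polynomial prefactors, together with the convergence bookkeeping needed to justify extracting the coefficients one at a time from the absolutely convergent infinite series: the cleanest route is to test on piecewise-constant $p$ with $N$ equal subintervals, Taylor-expand the resulting finite sums in the subinterval length, and invoke the exponential majorant of Lemma \ref{lemma:IIRcon} to justify the summation--integration swaps and the $\varepsilon\downarrow 0$ impulse limit. The DT polynomial argument itself is routine once the multilinear-vanishing lemma is in hand.
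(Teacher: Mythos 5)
Your proposal is correct and follows essentially the same route as the paper: recover $g\diamond p$ and $h\diamond p$ from $\mathfrak{F}$ via the zero input and (approximate) impulses, reduce uniqueness of $\theta_{\mathfrak{F}}$ to the vanishing of multilinear forms on tuples from $\mathbb{P}$ handled by the affine-basis hypothesis, extract the CT coefficients by differentiating/Taylor-expanding the series on piecewise-constant scheduling trajectories, and define $\mathfrak{F}_{\mathrm e}$ by the same (absolutely convergent) formulas on the enlarged scheduling space. The only cosmetic differences are that the paper invokes a variational lemma together with one-sided continuity at $\delta=0$ where you take an explicit short-pulse limit, and that it packages your multilinear-vanishing induction as a direct expansion of the standard basis vectors as affine combinations of an affine basis contained in $\mathbb{P}$.
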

  The proof of this result is given in the Appendix. This result not only yields a one-to-one correspondence between
  input-output maps and sub-Markov parameters, but it also tells us that the choice of scheduling space does not matter, since
  we can always extend an input-output function to a larger scheduling space or restrict it to a smaller one in a unique fashion.
  In particular, it will allow us to reduce realization theory of LPV-SSA representations to that of linear switched system, and use
  the results of \cite{Pet12,PetCocv11}.
 In the sequel, we will restrict our attention to input-output functions which admit an IIR in the previously defined form. This is not a serious restriction since any input-output function of a LPV-SSA representation always admits an IIR:
 \begin{lemma}[Existence of the IIR]\label{lem:realiofunction}
 The LPV-SSA representation $\Sigma$ of the form~\eqref{equ:alpvss} is a realization of an input-output function $\mathfrak{F}$ of the form \eqref{equ:iofunction}, if and only if, $\mathfrak{F}$ has an IIR and, for all $i,j\in \mathbb{I}_0^{n_\mathrm{p}}$, $s \in \Words$,  this IIR is such that \vspace{-1mm}
 \begin{subequations}\label{lem:realiofunction:eq0}
 \begin{align}
  \eta_{i,\mathfrak{F}}(s) &= C_i  A_s  x_\mathrm{o},\\
 \theta_{i,j,\mathfrak{F} }(s) &= C_i A_s B_{j} 
\end{align}
\end{subequations}
where for $s=\epsilon$, $A_s$ denotes the identity matrix, and for 
$s = s_1 \cdots s_{n-1}s_n $ and $s_1,\ldots s_n \in\mathbb{I}_0^{n_\mathrm{p}}$, $n > 0$, $A_{s}=A_{s_n}A_{s_{n-1}} \cdots A_{s_1}$. 
 \end{lemma}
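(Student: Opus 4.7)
The plan is to establish the forward direction by expanding the solution of \eqref{equ:alpvss} into a series indexed by words $s \in \mathcal{S}(\mathbb{I}_0^{n_\mathrm{p}})$ and matching term-by-term against the IIR defined by \eqref{IIRsum:CT}--\eqref{IIRsum:DT}; the backward direction then follows from the uniqueness of the IIR (Lemma \ref{lem:extension}).

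\emph{Forward direction, DT.} Iterating $x(t+1)=A(p(t))x(t)+B(p(t))u(t)$ yields the standard variation-of-constants formula
\begin{equation*}
x(t)=\Phi(t,0)x_\mathrm{o}+\sum_{\delta=0}^{t-1}\Phi(t,\delta+1)B(p(\delta))u(\delta),
\end{equation*}
where $\Phi(t,\tau)=A(p(t-1))A(p(t-2))\cdots A(p(\tau))$ for $t>\tau$ and $\Phi(\tau,\tau)=I$. Substituting $A(p(k))=\sum_{i\in\mathbb{I}_0^{n_\mathrm{p}}}A_i p_i(k)$ and expanding the product, a straightforward induction on $t-\tau$ shows that
\begin{equation*}
\Phi(t,\tau)=\sum_{|s|=t-\tau}A_s\,(w_s\diamond p)(t-1,\tau),
\end{equation*}
because the DT definition of $(w_s\diamond p)$ precisely enumerates the $(n_\mathrm{p}+1)^{t-\tau}$ monomials in $p_{i_0}(\tau)\cdots p_{i_{t-\tau-1}}(t-1)$ and the ordering in $A_s=A_{s_n}\cdots A_{s_1}$ matches the order of multiplication. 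Pre-multiplying by $C(p(t))=\sum_i C_i p_i(t)$ and expanding $B(p(\delta))$ similarly, and using that $(w_s\diamond p)(t-1,\tau)=0$ whenever $|s|\ne t-\tau$, one can sum freely over all $s$ and identify the coefficients with \eqref{IIRsum:DT}; this gives $\eta_{i,\mathfrak{F}}(s)=C_iA_sx_\mathrm{o}$ and $\theta_{i,j,\mathfrak{F}}(s)=C_iA_sB_j$.

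\emph{Forward direction, CT.} The state-transition matrix $\Phi(t,\tau)$ of $\xi x=A(p(t))x$ admits the Peano--Baker expansion
\begin{equation*}
\Phi(t,\tau)=I+\sum_{n\ge 1}\int_\tau^t\!A(p(\tau_n))\!\int_\tau^{\tau_n}\!\!A(p(\tau_{n-1}))\cdots\!\int_\tau^{\tau_2}\!\!A(p(\tau_1))\,d\tau_1\cdots d\tau_n,
\end{equation*}
which converges absolutely and uniformly on compact subsets of $\{(t,\tau):\tau\le t\}$ since $p$ is piecewise continuous and $\|A(p(\cdot))\|$ is locally bounded. Expanding each $A(p(\tau_k))$ affinely and exchanging the finite sum over $(s_1,\ldots,s_n)$ with the iterated integrals identifies the $n$-th term with $\sum_{|s|=n}A_s(w_s\diamond p)(t,\tau)$. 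Plugging this into the variation-of-constants formula $x(t)=\Phi(t,0)x_\mathrm{o}+\int_0^t\Phi(t,\delta)B(p(\delta))u(\delta)\,d\delta$, pre-multiplying by $C(p(t))$, and expanding $C,B$ affinely yields exactly \eqref{IIRsum:CT} with the claimed parameters. The exchange of the $s$-sum with the $\delta$-integral is justified by absolute convergence together with the bound $\|A_s\|\le M^{|s|}$, where $M=\max_i\|A_i\|$, which dominates the integrand uniformly on $[0,t]$; this bound also immediately gives the growth condition \eqref{IIR:growth} with $R$ and $K$ depending on $M$, $\max_i\|C_i\|$, $\max_i\|B_i\|$ and $\|x_\mathrm{o}\|$.

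\emph{Backward direction.} Given any LPV-SSA $\Sigma$ and initial state $x_\mathrm{o}$, the forward direction shows that $\mathfrak{Y}_{\Sigma,x_\mathrm{o}}$ admits an IIR whose sub-Markov parameters are precisely those in \eqref{lem:realiofunction:eq0}. If $\mathfrak{F}$ admits an IIR with these same parameters, then $\theta_\mathfrak{F}=\theta_{\mathfrak{Y}_{\Sigma,x_\mathrm{o}}}$, and Lemma \ref{lem:extension} yields $\mathfrak{F}=\mathfrak{Y}_{\Sigma,x_\mathrm{o}}$, i.e.\ $\Sigma$ is a realization of $\mathfrak{F}$ from $x_\mathrm{o}$.

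\emph{Main obstacle.} The routine part is combinatorial: carefully tracking the reversed order in $A_s=A_{s_n}\cdots A_{s_1}$ against the time-order in the products and iterated integrals, and reconciling the $t-1$ shift that appears in the DT formula \eqref{IIRsum:DT} but not in \eqref{IIRsum:CT}. The genuinely technical step is the CT case: justifying the term-by-term expansion of the Peano--Baker series and the interchange of the (now infinite) sum over $s$ with the integral against $u(\delta)$ on $[0,t]$. This is where the exponential bound $\|A_s\|\le M^{|s|}$ is essential, as it lets dominated convergence take over and in passing delivers the required growth condition on $\theta_\mathfrak{F}$.
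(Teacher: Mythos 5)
Your proof is correct and follows essentially the same route as the paper: both rest on the variation-of-constants formula, an expansion of the state-transition matrix $\Phi(t,\tau)$ into a word-indexed series whose coefficients are the products $A_s$ (matching them against \eqref{IIRsum:CT}--\eqref{IIRsum:DT}), the exponential bound giving \eqref{IIR:growth}, and Lemma \ref{lem:extension} for the converse direction. The only real difference is presentational: the paper obtains the series expansion by viewing the columns of $h_{\Sigma}\diamond p$ and $g_{\Sigma}\diamond p$ as outputs of an auxiliary bilinear system and citing the generating-series results of \cite{Son79b,Isi95,isi:tac}, whereas you carry out the Peano--Baker expansion and the dominated-convergence interchange explicitly.
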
 
 The proof of this result is given in the Appendix. 
\begin{remark}[Further intuition behind IIR representation]
From the proof Lemma \ref{lem:realiofunction} it also follows that, 
if $\mathfrak{F}$ is realized by an LPV-SSA representation $\Sigma$  of the form \eqref{equ:alpvss} from the initial state $x_{\mathrm o}$,  
for all $\tau \le t \in \mathbb{T}$, $p \in \mathcal{P}$, 
\[ 
   \begin{split}
& (g_{\mathfrak{F}} \diamond p)(t)=\left\{\begin{array}{rl}]
                                    C(p(t))\Phi_{p}(t-1,0)x_{\mathrm o} & \mbox{ DT} \\
                                    C(p(t))\Phi_{p}(t,0)x_{\mathrm o} & \mbox{ CT} 
                                 \end{array}\right. \\
 & (h_{\mathfrak{F}} \diamond p)(\tau,t)=\left\{\begin{array}{rl}
                                      C(p(t))\Phi_{p}(t,\tau+1)B(p(\tau)) & \mbox{ DT } \\
                                      C(p(t))\Phi_{p}(t,\tau)B(p(\tau)) & \mbox{ CT } 
                                      \end{array}\right.
\end{split}
\]
Here $\Phi_{p}(t,\tau)$ is the fundamental matrix of
the time-varying linear system $\xi x(t)=A(p(t))x(t)$, i.e. $\Phi_p(\tau,\tau)=I_{\NX}$ and for all $\tau \le t \in \mathbb{T}$,
 $\dfrac{d}{dt} \Phi_p(t,\tau)=A(p(t))\Phi_p(t,\tau)$ in CT and $\Phi_p(t+1,\tau)=A(p(t))\Phi_p(t,\tau)$ in DT. 
\end{remark}

\subsection{State-space realization theory for affine dependence}
\label{para:kalman}
 Below, we present a novel Kalman-style realization theory for LPV-SSA representations, which, in our opinion, 
 opens the door for the development of a new generation of state-space identification, model reduction and control methodologies.  
\begin{theorem}[Minimality, weak sense]
\label{theo:min}
  An LPV-SSA representation  $\Sigma$ is minimal w.r.t. a given initial state $x_\mathrm{o}\in\mathbb{X}$, if and only if, $\Sigma$ is 
span-reachable from $x_\mathrm{o}$ and $\Sigma$ is observable. If $\Sigma$ is an LPV-SSA representation which is minimal w.r.t. some initial state $x_0$, and $\Sigma^\prime$ is an LPV-SSA representation which is minimal w.r.t. some initial state $x_0^{'}$, 
and $\Sigma$ and $\Sigma^\prime$ are weakly input-output equivalent w.r.t the initial states $x_0$ and $x_0^\prime$, then $\Sigma$ and $\Sigma^\prime$ are isomorphic. 
\footnote{In fact, with the notation of Definition \ref{isomorphism}, we can show that there exists a matrix $T$ such that in addition to \eqref{equ:isomorphism}, $Tx_0=x^{'}_0$ holds. See the discussion after the proof of Theorem \ref{theo:min} in Appendix.}
\end{theorem}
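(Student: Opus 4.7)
The plan is to reduce the LPV-SSA realization problem to the known Kalman-type realization theory for linear switched systems developed in \cite{Pet12,PetCocv11}. The bridge is provided by Lemmas~\ref{lem:extension} and~\ref{lem:realiofunction}: the former gives a bijection between $\mathfrak{F}$ and its sub-Markov parameter sequence $\theta_{\mathfrak{F}}$, and the latter identifies these sub-Markov parameters with
\[
\eta_{i,\mathfrak{F}}(s) = C_i A_s x_\mathrm{o}, \qquad \theta_{i,j,\mathfrak{F}}(s) = C_i A_s B_j, \qquad s \in \Words,
\]
which are precisely the Markov parameters of the linear switched system with mode set $\AQ$ and matrices $(A_i,B_i,C_i)$ in each mode. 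Realizing $\mathfrak{F}$ by an LPV-SSA from $x_\mathrm{o}$ is therefore the same problem as realizing this Markov sequence by a linear switched system from $x_\mathrm{o}$, and the minimal state dimensions coincide.

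Next, I would show that the LPV-SSA notions of span-reachability and observability from Definition~\ref{def:reachobs} coincide with their switched-system counterparts, namely $\mathrm{Span}\{A_s x_\mathrm{o},\, A_s B_j v \mid s \in \Words,\, j \in \AQ,\, v \in \U\} = \mathbb{X}$ and $\bigcap_{s \in \Words,\, i \in \AQ} \ker(C_i A_s) = \{0\}$, respectively. Observability is essentially immediate from Lemma~\ref{lem:extension}: two initial states $x_1,x_2$ induce identical input-output functions iff they induce identical sub-Markov parameters $\eta_{i,\mathfrak{F}}$, iff $C_i A_s (x_1 - x_2) = 0$ for every $s,i$. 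Span-reachability requires more work: using the hypothesis $\mathrm{Aff}\,\mathbb{P} = \mathbb{R}^{n_\mathrm{p}}$ together with Lemma~\ref{lem:extension}, one restricts to piecewise-constant scheduling signals taking values in an affine basis of $\mathbb{R}^{n_\mathrm{p}}$; a polarization/Vandermonde argument on the affine combinations then shows that the subspace reached by such trajectories coincides with the span of the switched-system products $A_s B_j v$ and $A_s x_\mathrm{o}$.

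Once this correspondence is established, both assertions of the theorem follow directly from the switched-system results in \cite{Pet12,PetCocv11}: a switched realization is minimal iff it is span-reachable and observable, and any two minimal switched realizations of the same Markov sequence are related by a unique nonsingular $T$ satisfying $TA_i = A_i' T$, $TB_i = B_i'$, $C_i = C_i' T$ for all $i \in \AQ$ and $Tx_\mathrm{o} = x_\mathrm{o}'$. Translated back, this yields both the equivalence ``minimal $\iff$ span-reachable and observable'' and the $p$-independent linear isomorphism between weakly equivalent minimal LPV-SSA representations, including the extra property $Tx_\mathrm{o} = x_\mathrm{o}'$ asserted in the footnote.

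The main obstacle is the span-reachability half of the correspondence in the CT case. In DT, a piecewise-constant $p$ taking values in an affine basis of $\mathbb{R}^{n_\mathrm{p}}$ reduces the state recursion directly to iterated products of the form $A_{s_n}\cdots A_{s_1}$, and the equivalence of reachable subspaces is essentially a Vandermonde inversion. In CT, however, the state is built from iterated integrals of the flow of $\xi x = A(p(t))x$ driven by $B(p(t))u$, and extracting individual switched products $A_s B_j v$ from the reachable set requires a more delicate argument: one uses the extension lemma to allow arbitrary $\mathbb{R}^{n_\mathrm{p}}$-valued scheduling signals (including piecewise-constant ones supported on an affine basis), and then iterated differentiation with respect to scheduling amplitudes and dwell times, in the spirit of realization theory for bilinear and switched systems over continuous time, recovers the switched Markov products inside the LPV reachable subspace.
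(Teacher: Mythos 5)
Your proposal follows essentially the same route as the paper: both reduce the problem to the realization theory of linear switched systems in \cite{Pet12,PetCocv11} by identifying the sub-Markov parameters $C_iA_sx_\mathrm{o}$, $C_iA_sB_j$ with switched-system Markov parameters via Lemma~\ref{lem:extension} and Lemma~\ref{lem:realiofunction}, transporting minimality, observability, span-reachability and isomorphism across this correspondence, and then invoking the switched-system minimality theorem. The one step you flag as delicate --- the CT span-reachability correspondence --- is handled in the paper not by extracting switched products from the reachable set via differentiation in dwell times, but by dualizing: span-reachability is restated as $\nu^\top\mathfrak{X}_{\Sigma,x_\mathrm{o}}=0\Rightarrow\nu=0$, each functional $\nu^\top\mathfrak{X}_{\Sigma,x_\mathrm{o}}$ is itself the input-output function of an auxiliary LPV-SSA $\Sigma_\nu=(\mathbb{P},\{A_i,B_i,\nu\}_{i=0}^{\QNUM})$, and the injectivity of the restriction map $\SWS$ (a consequence of the already-proved uniqueness of the IIR) then gives the equivalence with span-reachability of the switched system at no extra cost.
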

The proof of this result is given in the Appendix. Another, equivalent way to state Theorem \ref{theo:min} is as follows:
\begin{theorem}[Minimal realizations, alternative statement]
 Assume $\mathfrak{F}$ is an input-output map of the form \eqref{equ:iofunction}. If an LPV-SSA $\Sigma$ is a realization of $\mathfrak{F}$ from the initial
 state $x_{\mathrm o}$, then $\Sigma$ is a minimal realization of $\mathfrak{F}$ if and only if $\Sigma$ is observable and span-reachable from $x_{\mathrm o}$. Any two minimal LPV-SSA realizations of $\mathfrak{F}$ are isomorphic. 
\end{theorem}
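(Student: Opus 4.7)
The plan is to show that this alternative statement is essentially a linguistic rephrasing of Theorem \ref{theo:min}, obtainable by unfolding the definitions of ``realization of $\mathfrak{F}$ from $x_{\mathrm o}$'', ``minimal realization of $\mathfrak{F}$'' and ``minimal w.r.t.\ $x_{\mathrm o}$'', together with the definition of weak input-output equivalence. So no new technical machinery is needed; the whole argument is a straightforward reduction.

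First I would handle the characterization of minimality. Suppose $\Sigma$ is a realization of $\mathfrak{F}$ from $x_{\mathrm o}$, i.e.\ $\mathfrak{F}=\mathfrak{Y}_{\Sigma,x_{\mathrm o}}$. By Definition \ref{def:min}, $\Sigma$ is minimal w.r.t.\ $x_{\mathrm o}$ precisely when it is a minimal realization of $\mathfrak{Y}_{\Sigma,x_{\mathrm o}}$, which under our hypothesis is the same as being a minimal realization of $\mathfrak{F}$. Hence the equivalence ``$\Sigma$ is a minimal realization of $\mathfrak{F}$ iff $\Sigma$ is observable and span-reachable from $x_{\mathrm o}$'' follows immediately from the first part of Theorem \ref{theo:min}.

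For the isomorphism statement, I would proceed as follows. Let $\Sigma$ and $\Sigma^\prime$ be two minimal realizations of $\mathfrak{F}$. By Definition \ref{def:min}, there exist initial states $x_{\mathrm o}\in\mathbb{X}$ and $x_{\mathrm o}^\prime\in\mathbb{X}^\prime$ such that $\mathfrak{Y}_{\Sigma,x_{\mathrm o}}=\mathfrak{F}=\mathfrak{Y}_{\Sigma^\prime,x_{\mathrm o}^\prime}$, so $\Sigma$ and $\Sigma^\prime$ are weakly input-output equivalent w.r.t.\ $x_{\mathrm o}$ and $x_{\mathrm o}^\prime$. By the equivalence established in the previous paragraph, the minimality of $\Sigma$ as a realization of $\mathfrak{F}$ entails that $\Sigma$ is minimal w.r.t.\ $x_{\mathrm o}$, and analogously $\Sigma^\prime$ is minimal w.r.t.\ $x_{\mathrm o}^\prime$. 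The isomorphism part of Theorem \ref{theo:min} then yields a linear, scheduling-independent isomorphism $T$ from $\Sigma$ to $\Sigma^\prime$, which moreover (by the footnote to Theorem \ref{theo:min}) satisfies $T x_{\mathrm o}=x_{\mathrm o}^\prime$.

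The only ``obstacle'', and it is purely a bookkeeping one, is making sure the quantifier structure in Definition \ref{def:min} is compatible with the one in Theorem \ref{theo:min}: Definition \ref{def:min} only asserts the existence of some initial state realizing $\mathfrak{F}$, whereas Theorem \ref{theo:min} fixes this state. This is handled by simply picking such an initial state as witness at the outset of each direction of the argument, which is what I do above.
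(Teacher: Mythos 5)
Your proposal is correct and matches the paper's intent exactly: the paper introduces this theorem with the words ``Another, equivalent way to state Theorem \ref{theo:min} is as follows'' and offers no separate proof, so the definitional unfolding you carry out (identifying ``minimal realization of $\mathfrak{F}$'' with ``minimal w.r.t.\ $x_{\mathrm o}$'' once $\mathfrak{F}=\mathfrak{Y}_{\Sigma,x_{\mathrm o}}$, and invoking the first and second parts of Theorem \ref{theo:min} respectively) is precisely the argument the authors have in mind. Your remark about the quantifier bookkeeping is the only genuinely non-trivial point, and you resolve it correctly by choosing witness initial states.
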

If we restrict our attention to the case of zero initial state, then Theorem \ref{theo:min} can be restated as follows: an LPV-SSA representation is minimal w.r.t. zero initial state, if and
only if it is observable and span-reachable from zero. Any two LPV-SSA representations which are minimal and weakly input-output equivalent w.r.t. the zero initial state (i.e., which induce the same input-output function from the zero initial state and which are both minimal realizations of this input-output function from zero), are isomorphic. 
Another consequence of Theorem \ref{theo:min} is that weak input-output equivalence of two LPV-SSA representations with respect to some initial states implies strong input-output equivalence of these representations, provided that both representations are minimal w.r.t. the designated initial states. This follows by noticing that these LPV-SSA representations are isomorphic, and hence they are strongly equivalent.
This opens up the possibility to deal with strong minimality.
Let us call an LPV-SSA $\Sigma$ \emph{strongly minimal}, if $\Sigma$ is minimal w.r.t. all $x_\mathrm{o}\in\mathbb{X}$.
\begin{theorem}[Minimality, strong sense]
\label{theo:min:strong}
 An LPV-SSA representation $\Sigma$ is strongly minimal, $\iff$ it is minimal w.r.t. $0$ $\iff$ it is observable and span-reachable from the zero initial state. 
Any two strongly minimal and strongly input-output equivalent LPV-SSA representations are isomorphic. 
In addition, two strongly minimal LPV-SSA representations are weakly input-output equivalent w.r.t. to some initial states if and only if they are strongly input-output equivalent. 
\end{theorem}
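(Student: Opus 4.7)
The strategy is to bootstrap Theorem \ref{theo:min} by showing that for LPV-SSA representations the notions of minimality at different initial states collapse, thanks to the linearity in $x_{\mathrm o}$ of the state trajectory. The main technical step is the observation that span-reachability from $0$ implies span-reachability from every $x_{\mathrm o} \in \mathbb{X}$. To prove this, I would exploit the decomposition
\begin{equation*}
\mathfrak{X}_{\Sigma,x_{\mathrm o}}(u,p)(t) = \mathfrak{X}_{\Sigma,0}(u,p)(t) + \Phi_{p}(t,0)\,x_{\mathrm o},
\end{equation*}
valid in both CT and DT, and note that since $R_0 := \mathrm{Span}\{\mathfrak{X}_{\Sigma,0}(u,p)(t) : (u,p)\in \mathcal{U}\times\mathcal{P},\, t\in\mathbb{T}\}$ is a \emph{subspace} of $\mathbb{X}$, the span of the translated set $\{r + \Phi_p(t,0)x_{\mathrm o} : r \in R_0,\, (p,t)\}$ equals $R_0 + \mathrm{Span}\{\Phi_p(t,0)x_{\mathrm o}\}$. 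Hence if $R_0 = \mathbb{X}$ then the reachable set from $x_{\mathrm o}$ also spans $\mathbb{X}$.

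With this in hand the equivalence chain of the first sentence falls out. Observability is independent of $x_{\mathrm o}$ by Definition \ref{def:reachobs}, and the previous paragraph shows that span-reachability from $0$ implies span-reachability from every $x_{\mathrm o}$. By Theorem \ref{theo:min} applied at $x_{\mathrm o}$, minimality w.r.t.\ $0$ therefore entails minimality w.r.t.\ every $x_{\mathrm o}$, i.e.\ strong minimality; the converse is immediate from the definition of strong minimality. The equivalence with observability plus span-reachability from $0$ is a direct application of Theorem \ref{theo:min} at $x_{\mathrm o}=0$. For the isomorphism claim, let $\Sigma$ and $\Sigma'$ be strongly minimal and strongly i/o equivalent; applied at $x_{\mathrm o}=0$, strong equivalence yields some $x_{\mathrm o}' \in \mathbb{R}^{n_{\mathrm{x}'}}$ with $\mathfrak{Y}_{\Sigma,0}=\mathfrak{Y}_{\Sigma',x_{\mathrm o}'}$. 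Because both systems are minimal at the respective initial states (by strong minimality), Theorem \ref{theo:min} supplies an isomorphism $T$ from $\Sigma$ to $\Sigma'$, which moreover satisfies $T\cdot 0 = x_{\mathrm o}'$ by the footnote of that theorem; in particular $x_{\mathrm o}'=0$.

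For the final equivalence between weak and strong i/o equivalence under strong minimality, the forward direction follows directly from the definition of strong equivalence. For the converse, assume $\Sigma$ and $\Sigma'$ are strongly minimal and weakly i/o equivalent w.r.t.\ some $x_0, x_0'$. Theorem \ref{theo:min} together with its footnote delivers an isomorphism $T$ with $Tx_0 = x_0'$. Using the relations \eqref{equ:isomorphism} and a straightforward induction on the length of sequences $s$ in the IIR representation of $\mathfrak{Y}_{\Sigma,y}$ supplied by Lemma \ref{lem:realiofunction}, one obtains $\mathfrak{Y}_{\Sigma,y}= \mathfrak{Y}_{\Sigma',Ty}$ for every $y \in \mathbb{X}$, and symmetrically with $T^{-1}$, which is exactly strong i/o equivalence. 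The main obstacle is the very first step: one must rely crucially on the subspace/linearity structure to propagate span-reachability from $0$ to arbitrary initial states, since the reverse implication does not hold in general (e.g.\ any system whose reachable subspace from $0$ is strictly smaller than $\mathbb{X}$ can nevertheless be span-reachable from a well-chosen non-zero state). All remaining steps are essentially bookkeeping applications of Theorem \ref{theo:min}, its footnote, and the uniqueness of the IIR.
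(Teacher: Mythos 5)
Your proof is correct, but the core step goes by a genuinely different route than the paper's. For the implication ``minimal w.r.t.\ $0$ $\Rightarrow$ strongly minimal'' the paper works entirely at the output level: given any competing realization $\Sigma'$ of $\mathfrak{F}=\mathfrak{Y}_{\Sigma,x_{\mathrm o}}$ from some $x_{\mathrm o}'$, it uses the decomposition $\mathfrak{Y}_{\Sigma',0}(u,p)=\mathfrak{Y}_{\Sigma',x_{\mathrm o}'}(u,p)-\mathfrak{Y}_{\Sigma',x_{\mathrm o}'}(0,p)$ (and the same for $\Sigma$) to conclude that $\mathfrak{Y}_{\Sigma,0}=\mathfrak{Y}_{\Sigma',0}$, whence $\dim\Sigma\le\dim\Sigma'$ directly from the \emph{definition} of minimality w.r.t.\ $0$ --- no appeal to the reachability/observability characterization is needed for this step. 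You instead work at the state level, proving that span-reachability from $0$ propagates to every initial state via $\mathfrak{X}_{\Sigma,x_{\mathrm o}}(u,p)(t)=\mathfrak{X}_{\Sigma,0}(u,p)(t)+\Phi_p(t,0)x_{\mathrm o}$, and then invoke both directions of Theorem \ref{theo:min}. Your propagation lemma is valid (the cleanest justification is to take $u=0$ to see that $\Phi_p(t,0)x_{\mathrm o}$ itself lies in the reachable set, so that the reachable span from $x_{\mathrm o}$ contains $R_0$; your phrasing in terms of a freely translated set is slightly looser than the actual paired set, but the conclusion stands). What each approach buys: the paper's argument is shorter and uses only the linearity of the response in $(x_{\mathrm o},u)$ plus the definition of minimality, while yours yields the extra structural fact that span-reachability from $0$ implies span-reachability from every state --- a statement of independent interest that also explains, by contrast, the paper's remark that the reverse implication fails. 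Your treatment of the isomorphism claim and of the weak-versus-strong equivalence is essentially the same as the paper's (both reduce to Theorem \ref{theo:min} and its footnote).
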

The proof of Theorem \ref{theo:min:strong} is presented in the Appendix. Theorem \ref{theo:min:strong} implies that LPV-SSA representations which are minimal w.r.t. the zero initial state have particularly nice properties. Note that
it is perfectly possible for an LPV-SSA representation to be minimal w.r.t. some initial state, and not to be minimal w.r.t. the zero initial state.

A remarkable observation is that, similarly to the linear time-invariant case, 
rank conditions for observability and reachability can be obtained to verify state minimality for LPV-SSA, which is not the case for general LPV-SS representations (see \cite{Toth11_LPVBehav}). To this end, let us recall the definition of the extended reachability and observability matrices for LPV-SSA representations (see, \emph{e.g.}, \cite{TAW12}).
Let $\Sigma$ be an LPV-SSA representation of the form~\eqref{equ:alpvss}-\eqref{equ:affdep} with $D(\cdot)\equiv 0$.
\begin{definition}[Ext. reachability \& observability matrices]
For an initial state $x_{\mathrm{o}}$, the $n$-step extended reachability matrices
$\mathcal{R}_n$  of $\Sigma$ from $x_{\mathrm{o}}$, $n \in \mathbb{N}$, are defined recursively as follows
\begin{subequations}
  \begin{align}
    \mathcal{R}_0 &=\left[
    \begin{array}{cccc}
      x_\mathrm{o} & B_0  & \cdots & B_{n_p}
    \end{array}\right], \\
    \mathcal{R}_{n+1} &=
    \left[
    \begin{array}{cccc} \mathcal{R}_n & A_0 \mathcal{R}_n & \cdots &
      A_{n_\mathrm{p}} \mathcal{R}_n \end{array} \right],
  \end{align}
\end{subequations}
The extended $n$-step observability matrices
$\mathcal{O}_n$ of $\Sigma$, $n \in \mathbb{N}$, are given as
\begin{subequations}
  \begin{align}
    \mathcal{O}_0 &=     \left[\begin{array}{cccc}  C_0^\top &  \cdots & C_{n_\mathrm{p}}^\top \end{array}\right]^\top, \\
    \mathcal{O}_{n+1} &=
    \left[\begin{array}{cccc}  
      \mathcal{O}_n^\top & 
      A_0^\top \mathcal{O}_{n}^\top  &
      \cdots & A_{n_\mathrm{p}}^\top \mathcal{O}^\top_n \end{array}\right]^\top.
  \end{align}
\end{subequations}
\end{definition}
It is not difficult to show that \begin{subequations} \begin{equation} \mathrm{Im} \{ \mathcal{R}_{n_\mathrm{x}-1}\} =\sum_{i=0}^{\infty} \mathrm{Im}\{ \mathcal{R}_{i} \},  \vspace{-1mm}\end{equation} and $\mathcal{R}_\ast:=\mathrm{Im} \{ \mathcal{R}_{n_\mathrm{x}-1}\}$ is the smallest subspace of $\mathbb{R}^{n_\mathrm{x}}$ 
such that
$x_\mathrm{o} \in \mathcal{R}_\ast$,  $\mathrm{Im} B_i \subseteq \mathcal{R}_\ast$, $i \in \mathbb{I}_0^{\QNUM}$ and invariant in the sense: $A_i\mathcal{R}_\ast \subseteq \mathcal{R}_\ast$,
$\forall i \in \mathbb{I}_0^{n_\mathrm{p}}$. 
Similarly, \begin{equation} \mathrm{Ker} \{ \mathcal{O}_{n_\mathrm{x}-1}\} = \bigcap_{i=0}^{\infty} \mathrm{Ker} \{ \mathcal{O}_i \},\end{equation}\end{subequations}  and hence
$\mathcal{O}_\ast := \mathrm{Ker} \{ \mathcal{O}_{n_\mathrm{x}-1}\} $ is the largest subspace of $\mathbb{R}^{n_\mathrm{x}}$
such that $\mathcal{O}_\ast  \subseteq \mathrm{Ker} \{C_i\}$ and 
$A_i \mathcal{O}_\ast \subseteq \mathcal{O}_\ast $, $\forall i \in \mathbb{I}_0^{n_\mathrm{p}}$. %
Note that while the extended reachability matrices are defined from a particular initial state, the extended observability matrices do not depend on the choice of the initial state. 

\begin{theorem}[Rank conditions]
\label{theo:reachobs:rank:cond}
 The LPV-SSA representation $\Sigma$ is span-reachable from $x_\mathrm{o}$, if and only if $\rank{ \left( \mathcal{R}_{n_\mathrm{x}-1} \right) } = n_\mathrm{x}$.
 $\Sigma$ is observable, if and only if $\rank{ \left( \mathcal{O}_{n_\mathrm{x}-1} \right) } = n_\mathrm{x}$.
\end{theorem}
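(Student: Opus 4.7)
The plan is to reduce the theorem to two geometric equivalences and then invoke the algebraic facts about $\mathcal{R}_\ast := \mathrm{Im}(\mathcal{R}_{n_\mathrm{x}-1})$ and $\mathcal{O}_\ast := \mathrm{Ker}(\mathcal{O}_{n_\mathrm{x}-1})$ already noted in the excerpt: $\mathcal{R}_\ast$ is the smallest $A_i$-invariant subspace containing $x_\mathrm{o}$ and $\bigcup_i \mathrm{Im}\,B_i$, and $\mathcal{O}_\ast$ is the largest $A_i$-invariant subspace of $\bigcap_i \mathrm{Ker}\,C_i$. Given these, the theorem follows from: (i) the span of states reachable from $x_\mathrm{o}$ equals $\mathcal{R}_\ast$, and (ii) $\Sigma$ fails to be observable iff $\mathcal{O}_\ast \neq \{0\}$.

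For (i), the easy inclusion is that every reachable state lies in $\mathcal{R}_\ast$. Complete $\mathcal{R}_\ast$ to a direct sum decomposition $\mathbb{R}^{n_\mathrm{x}} = \mathcal{R}_\ast \oplus \mathcal{W}$ and adopt a compatible basis; in that basis each $A_i$ is block upper triangular (by invariance), each $B_i$ has all columns in $\mathcal{R}_\ast$, and $x_\mathrm{o}$ has zero $\mathcal{W}$-component, so the $\mathcal{W}$-component of $x(t)$ satisfies a homogeneous equation with zero initial condition and vanishes identically. The converse inclusion is the substantive one: one must realize, for every generator $A_{\sigma_n}\cdots A_{\sigma_1}x_\mathrm{o}$ or $A_{\sigma_n}\cdots A_{\sigma_1} B_j \mathbf{u}$ of $\mathcal{R}_\ast$, an explicit linear combination of reachable states. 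In DT, choose $u\equiv 0$ and piecewise constant $p(k)=v_k \in \mathbb{P}$; then $x(n)=A(v_{n-1})\cdots A(v_0)x_\mathrm{o}$, and because $A(\cdot)$ is affine and $\mathbb{P}$ contains an affine basis of $\mathbb{R}^{n_\mathrm{p}}$, a straightforward $(n_\mathrm{p}+1)\times(n_\mathrm{p}+1)$ linear inversion at each time slot isolates each pure product $A_{\sigma_n}\cdots A_{\sigma_1}x_\mathrm{o}$; impulsive inputs combined with the same trick yield the $A_\sigma B_j \mathbf{u}$ terms. In CT, constant schedules $p\equiv v$ with $u\equiv 0$ give $x(t)=e^{A(v)t}x_\mathrm{o}$, whose span over $t\ge 0$ equals $\mathrm{span}\{A(v)^k x_\mathrm{o} : k\ge 0\}$ by a standard Vandermonde/Cayley--Hamilton argument; concatenating such flows along piecewise constant schedules produces $A(v_n)\cdots A(v_1) x_\mathrm{o}$, after which the same affine-basis inversion extracts $A_{\sigma_n}\cdots A_{\sigma_1} x_\mathrm{o}$, and the $B_j$ terms follow analogously from constant inputs via variation of constants.

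For (ii), fix $z := x_1 - x_2$. By linearity, $\mathfrak{Y}_{\Sigma,x_1}(u,p) - \mathfrak{Y}_{\Sigma,x_2}(u,p) = C(p(\cdot))\tilde{x}(\cdot)$, where $\tilde{x}$ solves $\xi\tilde{x} = A(p)\tilde{x}$ with $\tilde{x}(0)=z$. If $z \in \mathcal{O}_\ast$, the same block-triangular basis argument (now with $\mathcal{O}_\ast$ as the invariant subspace contained in $\bigcap_i \mathrm{Ker}\,C_i$) shows $\tilde{x}(t)\in \mathcal{O}_\ast$ for all $t$, whence $C(p(t))\tilde{x}(t)=0$ identically. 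Conversely, if the output difference vanishes for every $(u,p)$, then applying the scheduling-extraction procedure of step (i) to both $C(p(t))$ and $A(p(s))$ under piecewise constant schedules yields $C_i A_{\sigma_n}\cdots A_{\sigma_1} z = 0$ for every $i\in\mathbb{I}_0^{n_\mathrm{p}}$ and every finite sequence $\sigma$, which is exactly the statement $z \in \mathcal{O}_\ast$.

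The main obstacle is the reverse inclusion in (i) (and, symmetrically, the converse in (ii)) in CT: one must disentangle products $A_{\sigma_n}\cdots A_{\sigma_1}$ from compositions of matrix exponentials along piecewise constant schedules, so the bookkeeping of time instants and coefficients, while elementary, is the technically densest part. The hypothesis $\mathrm{Aff}\,\mathbb{P}=\mathbb{R}^{n_\mathrm{p}}$ is essential precisely at this step: without it only certain affine combinations of $A_0,\ldots,A_{n_\mathrm{p}}$ could be generated and the isolation argument would fail.
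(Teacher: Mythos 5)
Your proposal is correct, but it takes a genuinely different route from the paper. The paper does not argue directly on $\Sigma$ at all: it first passes to the associated linear switched system $\mathfrak{S}(\Sigma)$ via the correspondence of Theorem~\ref{th:alpvlsrelation} (which preserves span-reachability and observability), and then simply cites the known rank characterizations for switched systems --- \cite[Theorem 4]{Pet12} in discrete time, and \cite[Propositions 33, 34]{Pet06} together with \cite[Proposition 1, Theorem 2]{PetCocv11} in continuous time, which identify $\mathrm{Im}\,\mathcal{R}_{n_\mathrm{x}-1}$ and $\ker\mathcal{O}_{n_\mathrm{x}-1}$ with the reachability and unobservability spaces of $\mathfrak{S}(\Sigma)$. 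You instead give a self-contained direct proof: you identify $\mathrm{Im}\,\mathcal{R}_{n_\mathrm{x}-1}$ with the smallest $A_i$-invariant subspace $\mathcal{R}_\ast$ containing $x_\mathrm{o}$ and the $\mathrm{Im}\,B_i$, prove that the span of reachable states equals $\mathcal{R}_\ast$ (block-triangular argument for one inclusion; affine-basis extraction plus, in CT, differentiation of $e^{A(v_n)t_n}\cdots e^{A(v_1)t_1}x_\mathrm{o}$ with respect to the switching durations at $0$ for the other), and argue dually for observability via the difference trajectory $z=x_1-x_2$. Both arguments are sound. The paper's route is shorter given its machinery and keeps the LPV theory formally anchored to the switched-systems literature, which is the organizing principle of the whole appendix; your route is more elementary, avoids external citations, and makes explicit exactly where the hypothesis $\mathrm{Aff}\,\mathbb{P}=\mathbb{R}^{n_\mathrm{p}}$ enters (the affine reconstruction of $A_0,\dots,A_{n_\mathrm{p}}$ and $C_0,\dots,C_{n_\mathrm{p}}$ from values $A(b_j)$, $C(b_j)$ on an affine basis $\{b_j\}\subset\mathbb{P}$) --- but in substance it re-derives, in the LPV setting, the content of the switched-system results the paper imports. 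The only places needing care in a full write-up are the ones you already flag: justifying that one-sided mixed partial derivatives of the concatenated flows at zero durations remain in the (closed, finite-dimensional) span of reachable states, and the analogous extraction of the $B_j$-terms via variation of constants; neither presents an obstruction.
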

The proof is given in the Appendix.  
This Theorem directly leads to algorithms for reachability, observability and minimality reduction of LPV-SSA models. These algorithms are similar as those for linear switched systems (see, \emph{e.g.}, \cite{Pet12,PetCocv11}).
\begin{Procedure}[Reachability reduction]
\label{LSSreach} Let $\Rank (\mathcal{R}_{n_\mathrm{x}-1}) = r$ and choose a basis $\{b_i\}_{i=1}^{n_\mathrm{x}} \subset \mathbb{R}^{n_\mathrm{x}}$ such that $\mathrm{Span}\{b_1,\ldots,b_{r} \}= \mathrm{Im} \{ \mathcal{R}_{n_\mathrm{x}-1}\} $.  In the new basis, the matrices
$\{A_i,B_i,C_i\}_{i=0}^{n_\mathrm{p}}$ become
\begin{subequations}\label{LSSreach:eq1} 
\begin{align}
\hat A_{i}&=\begin{bmatrix} A_{i}^{\mathrm R} & A^\prime_{i} \\ 0 & A^{\prime\prime}_{i} \end{bmatrix},&
\hat B_{i}&=\begin{bmatrix} B_{i}^{\mathrm R} \\ 0 \end{bmatrix}, \\
 \hat C_{i}&=\begin{bmatrix} C_i^{\mathrm R} & C_{i}^{\prime} \end{bmatrix}, & \hat x_\mathrm{o}&=\begin{bmatrix} x_\mathrm{o}^{\mathrm R} \\ 0 \end{bmatrix},
\end{align} \end{subequations}
where $A^{\mathrm R}_i \in \mathbb{R}^{r \times r}, B_i^{\mathrm R}
\in \mathbb{R}^{r \times n_\mathrm{u}}$, and $C^{\mathrm R}_i \in \mathbb{R}^{n_\mathrm{y} \times r}$. 
Define $\Sigma^{\mathrm R}= (\mathbb{P},\{A_i^{\mathrm R}, B_i^{\mathrm R}, C_i^{\mathrm R}\}_{i=0}^{n_\mathrm{p}})$. 
Then $\Sigma^{\mathrm R}$
is span-reachable from $x^{\mathrm{R}}_0$ and $\Sigma$ and $\Sigma^{\mathrm R}$
 are weakly input-output equivalent w.r.t. $x_{\mathrm o}$ and $x_{\mathrm o}^{\mathrm{R}}$, i.e. $\mathfrak{Y}_{\Sigma,x_{\mathrm o}}=\mathfrak{Y}_{\Sigma^{\mathrm R},x_{\mathrm o}^{\mathrm R}}$.
\end{Procedure}
Intuitively, $\Sigma^{\mathrm R}$ is obtained from $\Sigma$ by
restricting the dynamics and the output function of $\Sigma$ to the
subspace $\mathrm{Im}\{ \mathcal{R}_{n_\mathrm{x}-1}\}$. 
\begin{Procedure}[Observability reduction]
\label{LSSobs}
Let $\rank (\mathcal{O}_{n_\mathrm{x}-1})=o$ and choose a basis $\{b_i\}_{i=1}^{n_\mathrm{x}} \subset \mathbb{R}^{n_\mathrm{x}}$ such that  $\mathrm{Span}\{b_{o+1},\ldots,b_{n_\mathrm{x}} \}= \mathrm{Ker} \{ \mathcal{O}_{n_\mathrm{x}-1}\} $.  In the new basis, the matrices
$\{A_i,B_i,C_i\}_{i=0}^{n_\mathrm{p}}$ become
\begin{subequations}\label{LSSobsv:eq1} 
\begin{align}
\hat A_{i}&=\begin{bmatrix} A_{i}^{\mathrm O} & 0 \\ A^\prime_{i} & A^{\prime\prime}_{i} \end{bmatrix},&
\hat B_{i}&=\begin{bmatrix}  B_{i}^{\mathrm O} \\ B_i^{\prime} \end{bmatrix}, \\
 \hat C_{i}&=\begin{bmatrix} C_i^{\mathrm O} & 0 \end{bmatrix}, & \hat{x}_\mathrm{o}&=\begin{bmatrix} x_{\mathrm{o}}^{\mathrm{O}} 
 \\ x_\mathrm{o}^{\prime} 
 \end{bmatrix},
\end{align} \end{subequations}
where $A^{\mathrm O}_{i} \in \mathbb{R}^{o \times o}, B_i^{\mathrm O}
\in \mathbb{R}^{o \times n_\mathrm{u}}$ and $C_i^{\mathrm O} \in \mathbb{R}^{n_\mathrm{y}
  \times o}$.  Define $\Sigma^{\mathrm O}= (\mathbb{P},\{A_i^{\mathrm O}, B_i^{\mathrm O}, C_i^{\mathrm O}\}_{i=0}^{n_\mathrm{p}})$. Then,  any $x^{\mathrm{O}}_\mathrm{o}\in\mathbb{R}^{o}$ is observable, and $\Sigma$ and $\Sigma^{\mathrm O}$
 are weakly input-output equivalent w.r.t. $x_{\mathrm o}$ and $x_{\mathrm o}^{\mathrm{O}}$, i.e. $\mathfrak{Y}_{\Sigma,x_{\mathrm o}}=\mathfrak{Y}_{\Sigma^{\mathrm O},x_{\mathrm o}^{\mathrm O}}$.
%
%
%
%
\end{Procedure}
Intuitively, $\Sigma^{\mathrm O}$ is obtained from $\Sigma$ by merging
any two states $x_1$, $x_2$ of $\Sigma$, for which
$\mathcal{O}_{n_\mathrm{x}-1}x_1=\mathcal{O}_{n_\mathrm{x}-1}x_2$.

\begin{Procedure}[Minimal representation]
\label{LSSmin}
Given an LPV-SSA representation $\Sigma$ and an initial state $x_\mathrm{o}\in\mathbb{R}^{n_\mathrm{x}}$. Using Procedure \ref{LSSreach}, transform $\Sigma$
 w.r.t. $x_\mathrm{o}$ to a span reachable $\Sigma^{\mathrm R}$. Subsequently, transform $\Sigma^{\mathrm R}$ w.r.t. $x_\mathrm{o}^{\mathrm R}$
to an observable $\Sigma^{\mathrm M}$ with $x_\mathrm{o}^{\mathrm M}$ using Procedure~\ref{LSSobs}.  
Then,
$\Sigma^{\mathrm M}$ is a minimal LPV-SSA w.r.t. $x_\mathrm{o}^{\mathrm M}$ and $\Sigma^{\mathrm M}$ is weakly input-output equivalent to
$\Sigma$ w.r.t. initial states $x_{\mathrm o}^{\mathrm M}$ and $x_{\mathrm o}$.
\end{Procedure}

Procedures \ref{LSSreach} -- \ref{LSSobs} can be combined to yield a Kalman-decomposition as follows.
\begin{Procedure}[Kalman decomposition]
\label{kalman_decomp}
Consider an LPV-SSA $\Sigma$ of the form \eqref{equ:alpvss} and an initial state $x_0 \in \X$. 
 Choose a basis $\{b_i\}_{i=1}^{n_\mathrm{x}} \subset \mathbb{R}^{n_\mathrm{x}}$ such that $\mathrm{Span}\{b_1,\ldots,b_{r} \}\!\!\!=\!\!\! \mathrm{Im} \{ \mathcal{R}_{n_\mathrm{x}-1}\} $ and
$\mathrm{Span}\{b_{r_m+1},\ldots,b_{r} \}\!\!\!=\!\!\!(\mathrm{Im}\{ \mathcal{R}_{n_\mathrm{x}-1}\} \cap \ker \{\mathcal{O}_{n_{\mathrm x}-1}\})$ for some 
$r,r_m \ge  0$. Define $T=\begin{bmatrix} b_1 & b_2 & \ldots  & b_{n_{\mathrm x}} \end{bmatrix}^{-1}$, and let
$\hat{A}_i=TA_iT^{-1}$, $\hat{B}_i=TB_i, \hat{C}_i=C_iT^{-1}$, $i \in \AQ$, $\hat{x}_{\mathrm o}=Tx_{\mathrm o}$. Then 
\begin{equation}
\label{kalman_decomp}
\begin{split}
& \hat{A}_{i} \!\!=\!\!\begin{bmatrix} A_{i}^{\mathrm m} & \!\!\! 0 & \!\!\! A^{\prime\prime}_i \\
                                \!\!\! A^{\prime}_i    & \hat{A}^{\prime}  &\!\!\! A^{\prime\prime\prime}_i  \\
                            0                 & \!\!\! 0            &\!\!\! A^{\prime\prime\prime\prime}_i \!\!\!
             \end{bmatrix}\!\!, ~
 \hat{B}_{i}\!\!=\!\!\begin{bmatrix} B_{i}^{\mathrm m} \\ B_{i}^{\prime} \\  0 \!\! \end{bmatrix}\!\!, ~  
\hat{C}_{i} \!\!=\!\!\begin{bmatrix} (C_i^{\mathrm m})^{\top} \\  0 \\ (C_{i}^{\prime})^{\top} \!\! \end{bmatrix}^{\top}\!\!\!, \\
 & \hat{x}_\mathrm{o}\!\!=\!\!\begin{bmatrix} (x_\mathrm{o}^{\mathrm m})^{\top} & \bar{x}_\mathrm{o}^{\top} & 0 \end{bmatrix}^{\top},
\end{split}
\end{equation}
where $A^{\mathrm m}_i \in \mathbb{R}^{r_m \times r_m}, B_i^{\mathrm m} \in \mathbb{R}^{r_m \times n_\mathrm{u}}$, and $C^{\mathrm m}_i \in \mathbb{R}^{n_\mathrm{y} \times r_m}$,
$x_{\mathrm{o}}^m \in \mathbb{R}^{r_m}$, and  $A_i^{\prime\prime\prime} \in \mathbb{R}^{(n-r) \times (n-r)}$, $A_i^{\prime} \in \mathbb{R}^{(r-r_m) \times r_m}$, $A^{\prime\prime}_i  \in \mathbb{R}^{r_m \times (n-r)}$, $ A^{\prime\prime\prime}_i  \in \mathbb{R}^{(r-r_m) \times (n-r)}$, $\hat{A}^{\prime} \in \mathbb{R}^{(r-r_m) \times (r-r_m)}$, $B_i^{\prime} \in \mathbb{R}^{(r-r_m) \times \NU}$, $C_i^{\prime} \in \mathbb{R}^{\NY \times (n-r)}$.
Clearly, $\hat{\Sigma}=(\mathbb{P}, \{\hat{A}_i, \hat{B}_i, \hat{C} _i,0\}_{i=0}^{n_\mathrm{p}})$ is isomorphic to $\Sigma$ and can be viewed as its Kalman-decomposition of $\Sigma$.
\end{Procedure}
\begin{corollary}
\label{lem:min}
 The LPV-SSA
$\Sigma^{\mathrm m}= (\mathbb{P},\{A_i^{\mathrm m}, B_i^{\mathrm m}, C_i^{\mathrm m},0\}_{i=0}^{n_\mathrm{p}})$ is a minimal realization of $\mathfrak{F}=\mathfrak{Y}_{\Sigma,x_0}$ 
from the intial state  $x^{\mathrm m}_{\mathrm o}$.
\end{corollary}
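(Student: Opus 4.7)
The plan is to recognize Procedure~\ref{kalman_decomp} as the composition of Procedure~\ref{LSSreach} applied to $\Sigma$ and $x_{\mathrm o}$, followed by Procedure~\ref{LSSobs} applied to the resulting span-reachable subsystem, and then to invoke Procedure~\ref{LSSmin}. Since by construction $\mathrm{Span}\{b_1,\ldots,b_r\}=\mathrm{Im}\{\mathcal{R}_{n_\mathrm{x}-1}\}$, the initial segment of the basis $\{b_i\}_{i=1}^{n_\mathrm{x}}$ is exactly of the form required by Procedure~\ref{LSSreach}. Reading off the block decomposition in \eqref{kalman_decomp}, the span-reachable subsystem produced by Procedure~\ref{LSSreach} is $\Sigma^{\mathrm R}=(\mathbb{P},\{A_i^{\mathrm R},B_i^{\mathrm R},C_i^{\mathrm R}\}_{i=0}^{n_\mathrm{p}})$ with
\begin{equation*}
A_i^{\mathrm R}=\begin{bmatrix} A_i^{\mathrm m} & 0 \\ A_i^\prime & \hat A^\prime \end{bmatrix},\quad B_i^{\mathrm R}=\begin{bmatrix} B_i^{\mathrm m} \\ B_i^\prime \end{bmatrix},\quad C_i^{\mathrm R}=\begin{bmatrix} C_i^{\mathrm m} & 0 \end{bmatrix},
\end{equation*}
and initial state $x_{\mathrm o}^{\mathrm R}=[(x_{\mathrm o}^{\mathrm m})^\top\ \bar x_{\mathrm o}^\top]^\top$; by Procedure~\ref{LSSreach}, $\Sigma^{\mathrm R}$ is span-reachable from $x_{\mathrm o}^{\mathrm R}$ and weakly input-output equivalent to $\Sigma$ w.r.t.\ $x_{\mathrm o}^{\mathrm R}$ and $x_{\mathrm o}$, so $\mathfrak{Y}_{\Sigma^{\mathrm R},x_{\mathrm o}^{\mathrm R}}=\mathfrak{F}$.

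The key technical step is to show that $\ker\mathcal{O}^{\mathrm R}_{r-1}=\mathrm{Span}\{e_{r_\mathrm{m}+1},\ldots,e_r\}\subset\mathbb{R}^{r}$, so that Procedure~\ref{LSSobs} applied to $\Sigma^{\mathrm R}$ (with basis $\{e_1,\ldots,e_r\}$) extracts exactly the top-left $r_\mathrm{m}\times r_\mathrm{m}$ block and yields $\Sigma^{\mathrm m}$ with initial state $x_{\mathrm o}^{\mathrm m}$. The inclusion $\mathrm{Span}\{e_{r_\mathrm{m}+1},\ldots,e_r\}\subseteq\ker\mathcal{O}^{\mathrm R}_{r-1}$ follows by a short induction: since $C_i^{\mathrm R}=[C_i^{\mathrm m}\ 0]$ and the upper-right $r_\mathrm{m}\times(r-r_\mathrm{m})$ block of $A_i^{\mathrm R}$ is zero, every monomial $C_{i_0}^{\mathrm R}A_{i_1}^{\mathrm R}\cdots A_{i_k}^{\mathrm R}$ is of the form $[\ast\ 0]$. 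For the reverse inclusion, I would use the defining property $\mathrm{Span}\{b_{r_\mathrm{m}+1},\ldots,b_r\}=\mathrm{Im}\{\mathcal{R}_{n_\mathrm{x}-1}\}\cap\ker\mathcal{O}_{n_\mathrm{x}-1}$: given $v^{\mathrm R}\in\ker\mathcal{O}^{\mathrm R}_{r-1}$, lift it to $v=[(v^{\mathrm R})^\top\ 0]^\top\in\mathbb{R}^{n_\mathrm{x}}$ in the Kalman coordinates. A direct computation using the block structure in \eqref{kalman_decomp} gives $\hat C_{i_0}\hat A_{i_1}\cdots\hat A_{i_k}v=C_{i_0}^{\mathrm R}A_{i_1}^{\mathrm R}\cdots A_{i_k}^{\mathrm R}v^{\mathrm R}$ for all admissible index sequences, and the identity $\ker\mathcal{O}^{\mathrm R}_{k}=\ker\mathcal{O}^{\mathrm R}_{r-1}$ for $k\ge r-1$ (stated in the text preceding Theorem~\ref{theo:reachobs:rank:cond}) shows $v\in\ker\mathcal{O}_{n_\mathrm{x}-1}$. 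Since $v$ also lies in $\mathrm{Im}\{\mathcal{R}_{n_\mathrm{x}-1}\}$ by construction, this forces $v\in\mathrm{Span}\{e_{r_\mathrm{m}+1},\ldots,e_r\}$, hence $v^{\mathrm R}$ has its first $r_\mathrm{m}$ coordinates equal to zero.

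This is the main obstacle; once it is settled, Procedure~\ref{LSSobs} applied to $\Sigma^{\mathrm R}$ produces exactly $\Sigma^{\mathrm m}=(\mathbb{P},\{A_i^{\mathrm m},B_i^{\mathrm m},C_i^{\mathrm m},0\}_{i=0}^{n_\mathrm{p}})$ with initial state $x_{\mathrm o}^{\mathrm m}$, which is observable and weakly input-output equivalent to $\Sigma^{\mathrm R}$ w.r.t.\ $x_{\mathrm o}^{\mathrm m}$ and $x_{\mathrm o}^{\mathrm R}$. Composing, $\mathfrak{Y}_{\Sigma^{\mathrm m},x_{\mathrm o}^{\mathrm m}}=\mathfrak{Y}_{\Sigma^{\mathrm R},x_{\mathrm o}^{\mathrm R}}=\mathfrak{F}$. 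Procedure~\ref{LSSmin} then guarantees that $\Sigma^{\mathrm m}$ is a minimal realization of $\mathfrak{F}$ from $x_{\mathrm o}^{\mathrm m}$, which is the claim of Corollary~\ref{lem:min}.
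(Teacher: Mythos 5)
Your argument is correct, but it is organized differently from the paper's. The paper proves Corollary \ref{lem:min} directly: it computes the extended reachability and observability matrices of the transformed system $\hat{\Sigma}$ by induction on $k$, obtaining the block forms $\hat{\mathcal{R}}_k=\bigl[\begin{smallmatrix}\mathcal{R}^{\mathrm m}_k\\ R_k\\ 0\end{smallmatrix}\bigr]$ and $\hat{\mathcal{O}}_k=\bigl[\begin{smallmatrix}\mathcal{O}^{\mathrm m}_k & 0 & O_k\end{smallmatrix}\bigr]$, deduces $\rank\{\mathcal{R}^{\mathrm m}_{n_\mathrm{x}-1}\}=\rank\{\mathcal{O}^{\mathrm m}_{n_\mathrm{x}-1}\}=r_\mathrm{m}$ and hence span-reachability and observability of $\Sigma^{\mathrm m}$ via Theorem \ref{theo:reachobs:rank:cond}, establishes $\mathfrak{Y}_{\hat{\Sigma},\hat{x}_\mathrm{o}}=\mathfrak{Y}_{\Sigma^{\mathrm m},x_\mathrm{o}^{\mathrm m}}$ by a trajectory argument (the last $n_\mathrm{x}-r$ coordinates stay zero, the output depends only on the first $r_\mathrm{m}$), and concludes minimality from Theorem \ref{theo:min}. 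You instead factor the Kalman decomposition as Procedure \ref{LSSreach} followed by Procedure \ref{LSSobs} and invoke the minimality claim of Procedure \ref{LSSmin}; the substance of your proof is the identification $\ker\mathcal{O}^{\mathrm R}_{r-1}=\mathrm{Span}\{e_{r_\mathrm{m}+1},\ldots,e_r\}$, whose reverse inclusion (lift $v^{\mathrm R}$ to $v=[(v^{\mathrm R})^\top\ 0]^\top$, show it lies in $\mathrm{Im}\{\mathcal{R}_{n_\mathrm{x}-1}\}\cap\ker\{\mathcal{O}_{n_\mathrm{x}-1}\}$) is correct and is in fact a cleaner justification than the paper's terse rank count for $\hat{\mathcal{O}}_{n_\mathrm{x}-1}$. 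The trade-off is that your route leans on the correctness of Procedures \ref{LSSreach}--\ref{LSSmin}, which the paper asserts (by analogy with the switched-systems case) but never proves; in particular, Procedure \ref{LSSmin} silently contains the fact that observability reduction preserves span-reachability, which the paper's direct proof of the corollary verifies explicitly through the block form of $\hat{\mathcal{R}}_k$. If you want your proof to be self-contained at the same level as the paper's, you should either add that preservation argument or replace the appeal to Procedure \ref{LSSmin} by a direct application of Theorem \ref{theo:min} to the system you have shown to be both span-reachable and observable.
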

The proof of Corollary \ref{lem:min} is presented in Appendix.

In order to demonstrate what the corresponding span-reachable and observable representations really describe let fix the scheduling trajectory $p\in\mathcal{P}$. Then, the LPV-SSA representation $\Sigma$ is equivalent with a a \emph{linear time-varying} (LTV)
representation
\begin{subequations}
\begin{align}
 \xi x(t)&=A(t)x(t)+B(t)u(t), \\
 y(t) &= C(t)x(t)+D(t)u(t), 
\end{align}
\end{subequations}
where $A(t):=A(p(t)),\ldots,D(t):=D(p(t))$. Let us introduce the following definitions:
%
\begin{definition}[Regularity certificate]
 Let $\Sigma$ be an LPV-SSA representation of the form \eqref{equ:alpvss}. It satisfies the regularity certificate if
 \begin{enumerate}
 \item
    $\mathcal{P}$ is convex with non-empty interior;
\item
    in DT, the matrix $A(\bar p)$ is invertible for all $\bar p \in \mathbb{P}$.
\end{enumerate}
\end{definition}

\begin{theorem}[Implication of observability]
\label{min:compare:col1}
 Let $\Sigma$ be an observable LPV-SSA representation of the form \eqref{equ:alpvss} such that $\Sigma$ satisfies the regularity certificate.
 There is at least one scheduling trajectory $p_\mathrm{o} \in \mathcal{P}$ and $t_\mathrm{o} > 0$ such that for any two states $x_1,x_2$ of $\Sigma$, $\mathfrak{Y}_{\Sigma,x_1}=\mathfrak{Y}_{\Sigma,x_2}$ if and only if 
 \[\mathfrak{Y}_{\Sigma,x_1}(0,p_\mathrm{o})(\tau) = \mathfrak{Y}_{\Sigma,x_2}(0,p_\mathrm{o})(\tau), \quad \forall \tau \in [0,t_\mathrm{o}].  \]
 In CT, $p_\mathrm{o}$ can be chosen to be analytic.
\end{theorem}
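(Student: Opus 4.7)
The plan is to reduce the theorem to showing that, for some $p_\mathrm{o}$ and some $t_\mathrm{o} > 0$, the \emph{linear time-varying} (LTV) system obtained by freezing $p = p_\mathrm{o}$ in $\Sigma$ is observable on $[0,t_\mathrm{o}]$. Indeed, the ``only if'' direction of the equivalence in the statement is trivial; conversely, if the zero-input outputs agree on $[0,t_\mathrm{o}]$, LTV observability forces $x_1 = x_2$, and then observability of $\Sigma$ yields $\mathfrak{Y}_{\Sigma,x_1} = \mathfrak{Y}_{\Sigma,x_2}$ on the whole of $\mathcal{U}\times\mathcal{P}$.

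I would then parametrize $p_\mathrm{o}$ by a finite-dimensional vector $v$: in DT, $v = (\bar p_0,\ldots,\bar p_{n_\mathrm{x}-1}) \in \mathbb{P}^{n_\mathrm{x}}$ with $p_\mathrm{o}(k) = \bar p_k$ and $t_\mathrm{o} = n_\mathrm{x}-1$; in CT, $v = (v_0,\ldots,v_{n_\mathrm{x}-1}) \in \mathbb{R}^{n_\mathrm{p}\cdot n_\mathrm{x}}$ with $p_\mathrm{o}(t) = \sum_k v_k t^k/k!$, which is a (polynomial) analytic trajectory. In both cases, the LTV observability matrix is a polynomial matrix $M(v)$: in DT with block rows $C(\bar p_k)A(\bar p_{k-1})\cdots A(\bar p_0)$ for $0\le k\le n_\mathrm{x}-1$; in CT with blocks $N_k(0) = \partial_t^k[C(p_\mathrm{o}(t))\Phi_{p_\mathrm{o}}(t,0)]|_{t=0}$ computed via the recursion $N_0 = C(p_\mathrm{o})$, $N_{k+1} = \dot N_k + N_k A(p_\mathrm{o})$. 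By analyticity of $p_\mathrm{o}$ in CT, or by standard finite-dimensional linear algebra in DT, LTV observability on a short interval $[0,t_\mathrm{o}]$ containing $0$ is equivalent to $\rank M(v) = n_\mathrm{x}$.

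The technical core is to show that $\{v : \rank M(v) = n_\mathrm{x}\}$ is a non-empty Zariski-open, hence Euclidean-dense, subset of parameter space. The approach is an induction on the block-row index $k$: the ``new'' parameter introduced at step $k$---$\bar p_k$ in DT, or $v_k$ in CT which enters $N_k(0)$ through $d^k C(p_\mathrm{o})/dt^k|_{t=0} = \sum_i v_{k,i}C_i + (\text{terms in } v_0,\ldots,v_{k-1})$---lets the $k$-th block row sweep out an affine subspace with directional part in $\mathrm{span}\{C_1,\ldots,C_{n_\mathrm{p}}\}$. Combining this with the analogous contribution of the $A$-factors for $k\ge 1$, the monomial coefficients of the polynomial $M(v)$ collectively realize every product $C_{i_0}A_{i_1}\cdots A_{i_k}$ appearing as a row of $\mathcal{O}_{n_\mathrm{x}-1}$. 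In the CT case this requires disentangling symmetric combinations; for instance, comparing the coefficients of $v_{0,j}v_{1,i}$ and $v_{0,i}v_{1,j}$ in $N_2(0)$ yields a non-singular $2\times 2$ linear system that isolates $C_iA_j$ and $C_jA_i$ individually. By observability and Theorem \ref{theo:reachobs:rank:cond}, these products span $(\mathbb{R}^{n_\mathrm{x}})^*$, and a standard polynomial-genericity argument then concludes that $\rank M(v) = n_\mathrm{x}$ on a Zariski-open, non-empty set.

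To finish, I invoke the regularity certificate: $\mathbb{P}$ has non-empty Euclidean interior, hence is Zariski-dense in $\mathbb{R}^{n_\mathrm{p}}$, so the Zariski-open good set meets $\mathbb{P}^{n_\mathrm{x}}$ in DT and meets $\{v : v_0 \in \mathrm{int}\,\mathbb{P}\}$ in CT. In DT, the invertibility clause of the regularity certificate is crucial in the inductive argument, since without it the block rows of $M(v)$ can collapse (e.g.\ when $A\equiv 0$) and no $v$ achieves rank $n_\mathrm{x}$. In CT, once $v$ is chosen, any sufficiently small $t_\mathrm{o} > 0$ keeps $p_\mathrm{o}(t) \in \mathbb{P}$ on $[0,t_\mathrm{o}]$, and continuity of $N_k(t)$ in $t$ transfers the full-rank property at $t=0$ to a neighborhood, delivering LTV observability on $[0,t_\mathrm{o}]$. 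The hardest step will be the inductive extraction of individual products $C_{i_0}A_{i_1}\cdots A_{i_k}$ from the polynomial coefficients of $M(v)$, particularly in CT, where the Leibniz-type expansion of derivatives inherently produces symmetrized combinations that must be broken apart by exploiting the higher-order Taylor coefficients.
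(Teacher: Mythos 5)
Your reduction to observability of the frozen LTV system, and the observation that $\{v:\rank(M(v))=n_\mathrm{x}\}$ is Zariski-open, are both fine and match the first step of the paper's proof. The gap is in the \emph{non-emptiness} of that set, which is the entire difficulty. From the fact that the monomial coefficients of the polynomial matrix $M(v)$ exhaust all products $C_{i_0}A_{i_1}\cdots A_{i_k}$, and that these span $(\mathbb{R}^{n_\mathrm{x}})^{*}$ by observability, it does \emph{not} follow that $M(v)$ has full rank for some (hence generic) $v$: the single row $(1,\;v,\;v^{2})$ has coefficient vectors spanning $\mathbb{R}^{3}$ yet rank one for every $v$. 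Your $M(v)$ has only $n_\mathrm{x}n_\mathrm{y}$ scalar rows in DT with $t_\mathrm{o}=n_\mathrm{x}-1$, while the products it encodes number on the order of $(n_\mathrm{p}+1)^{n_\mathrm{x}}n_\mathrm{y}$, so exactly this collapse can occur; equivalently, $\bigcap_{v}\ker M(v)=\ker\mathcal{O}_{n_\mathrm{x}-1}=\{0\}$ does not imply $\ker M(v)=\{0\}$ for some $v$. The horizon $t_\mathrm{o}=n_\mathrm{x}-1$ is also unjustified on its own: a nonzero state can sit inside $\bigcap_{i}\ker C_i$ for several steps before becoming visible, so the kernel of the frozen observability matrix need not shrink with each added block row, and a discriminating schedule may have to be longer than $n_\mathrm{x}$.

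What is actually needed is a universal-input argument that exploits the semigroup structure of the problem, and this is where the regularity certificate genuinely enters. The paper passes to an auxiliary bilinear system whose output is the integrated (CT) or delayed (DT) free response, shows its observability is equivalent to that of $\Sigma$, and then: in CT it invokes Sussmann's universal-input theorem, for which convexity of $\mathbb{P}$ with non-empty interior supplies the hypotheses (connected interior, $\mathbb{P}$ contained in the closure of its interior) and which also delivers analyticity of $p_\mathrm{o}$; in DT it runs a Wang--Sontag dimension-decrement argument: pick $(p^{*},t^{*})$ minimizing the dimension of the subspace of indistinguishable state pairs, and if that subspace exceeds the diagonal, use invertibility of $A(\bar p)$ to conclude that two indistinguishable but distinct initial states are still distinct at time $t^{*}$, then concatenate a distinguishing schedule to strictly decrease the dimension, a contradiction. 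Your proposal would be repaired by replacing the ``coefficients span $\Rightarrow$ generic full rank'' step with such a concatenation argument (accepting a longer, finite but not a priori equal to $n_\mathrm{x}-1$, horizon), or by citing a universal-input theorem directly; as written, the key existence claim is a non sequitur.
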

The proof is given in the Appendix. 
 We will call such a $p_\mathrm{o}$  to be a \emph{revealing scheduling trajectory} on $[0,t_\mathrm{o}]$.
\begin{corollary}[Observability revealing]
\label{min:compare:col11}
If $\Sigma$ is observable and it satisfies the regularity certificate,
then there exists a revealing
 $p_\mathrm{o} \in \mathcal{P}$ and a $t_\mathrm{o} > 0$, such that
 the LTV representation associated with $\Sigma$ and $p_\mathrm{o}$ is completely observable on $[0,t_\mathrm{o}]$.
 \end{corollary}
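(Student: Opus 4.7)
The plan is to derive this corollary as a direct consequence of Theorem \ref{min:compare:col1} combined with the definition of complete observability for LTV systems and the assumed observability of $\Sigma$. The essential point is that observability of the LPV-SSA, in the sense of Definition \ref{def:reachobs}, is formulated as injectivity of the map $x \mapsto \mathfrak{Y}_{\Sigma,x}$ over \emph{all} input/scheduling pairs, whereas complete observability of an LTV system on a finite interval is the injectivity of the zero-input output map for a \emph{fixed} scheduling on that interval. Theorem \ref{min:compare:col1} is precisely the bridge between these two notions.

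First, I would apply Theorem \ref{min:compare:col1} to obtain a scheduling trajectory $p_\mathrm{o} \in \mathcal{P}$ (analytic in the CT case) and a $t_\mathrm{o} > 0$ with the stated revealing property. Then, because $\Sigma$ is observable by assumption, $\mathfrak{Y}_{\Sigma,x_1}=\mathfrak{Y}_{\Sigma,x_2}$ forces $x_1=x_2$. Chaining this with the ``if and only if'' of Theorem \ref{min:compare:col1}, we get
\[
 \mathfrak{Y}_{\Sigma,x_1}(0,p_\mathrm{o})(\tau) = \mathfrak{Y}_{\Sigma,x_2}(0,p_\mathrm{o})(\tau)\ \forall \tau \in [0,t_\mathrm{o}] \ \Longrightarrow\ x_1 = x_2.
\]

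Next, I would rewrite this in LTV terms. Fixing $p=p_\mathrm{o}$, the system \eqref{equ:alpvss} becomes an LTV representation with $A(t)=A(p_\mathrm{o}(t))$, $C(t)=C(p_\mathrm{o}(t))$, and the zero-input output from initial state $x$ is $y(\tau)=C(\tau)\Phi_{p_\mathrm{o}}(\tau,0)x$ (with the appropriate DT/CT convention, as recalled in the remark after Lemma \ref{lem:realiofunction}). Hence the implication above means that the linear map $x \mapsto \{C(\tau)\Phi_{p_\mathrm{o}}(\tau,0)x\}_{\tau \in [0,t_\mathrm{o}]}$ is injective, which is exactly the definition of complete observability of the associated LTV representation on $[0,t_\mathrm{o}]$.

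The main (and only) obstacle is essentially notational: one has to make sure that the LTV notion of complete observability on $[0,t_\mathrm{o}]$ that is being claimed matches the kernel characterization arising from the zero-input output map, and to handle the CT/DT cases uniformly via the fundamental matrix $\Phi_{p_\mathrm{o}}$. Once this identification is in place, the statement is immediate from Theorem \ref{min:compare:col1} together with Definition \ref{def:reachobs}. No further rank or analyticity arguments are needed here, since the analyticity of $p_\mathrm{o}$ (in CT) and the regularity certificate have already been used in the proof of Theorem \ref{min:compare:col1}.
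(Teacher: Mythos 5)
Your proposal is correct and follows exactly the paper's route: the paper's own proof simply says to choose $p_\mathrm{o}$ as in Theorem \ref{min:compare:col1} and invoke the definition of complete observability for LTV systems, which is precisely what you do (with the welcome extra detail of spelling out the injectivity of the zero-input output map $x \mapsto C(\tau)\Phi_{p_\mathrm{o}}(\tau,0)x$). No gaps.
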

 By duality, the following holds true:
\begin{corollary}[Reachability revealing]
\label{min:compare:col2}
If $\Sigma$ is span-reachable from $x_\mathrm{o}=0$ and  $\Sigma$ satisfies the regularity certificate, 
 then there exists  exists a revealing
 $p_\mathrm{r} \in \mathcal{P}$ and a $t_\mathrm{r} > 0$, such that
 the LTV representation associated with $\Sigma$ and $p_\mathrm{r}$ is completely controllable on $[0,t_\mathrm{r}]$.
\end{corollary}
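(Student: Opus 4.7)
The plan is to derive this corollary from Corollary \ref{min:compare:col11} by dualising $\Sigma$. I would first introduce the dual LPV-SSA representation
\[ \Sigma^{*} = (\mathbb{P}, \{A^{*}_i, B^{*}_i, C^{*}_i, 0\}_{i=0}^{\QNUM}), \]
with $B^{*}_i = C_i^{\top}$, $C^{*}_i = B_i^{\top}$, and with $A^{*}_i = -A_i^{\top}$ in CT or $A^{*}_i = A_i^{\top}$ in DT. One checks directly that $\Sigma^{*}$ inherits the regularity certificate from $\Sigma$: convexity of $\mathbb{P}$ with non-empty interior is untouched, and in DT $A^{*}(\bar p)$ is invertible if and only if $A(\bar p)$ is.

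Next I would use Theorem \ref{theo:reachobs:rank:cond} to show that span-reachability of $\Sigma$ from $x_\mathrm{o}=0$ is equivalent to observability of $\Sigma^{*}$. With $x_\mathrm{o}=0$, the extended reachability matrix of $\Sigma$ agrees, up to transposition (and, in CT, a rank-preserving sign pattern coming from $-A_i^{\top}$), with the extended observability matrix of $\Sigma^{*}$, so the full-rank conditions of Theorem \ref{theo:reachobs:rank:cond} coincide. Applying Corollary \ref{min:compare:col11} to $\Sigma^{*}$ then yields a $p_\mathrm{r}\in\mathcal{P}$ (which may be taken analytic in CT) and a $t_\mathrm{r}>0$ such that the LTV representation associated with $\Sigma^{*}$ and $p_\mathrm{r}$ is completely observable on $[0,t_\mathrm{r}]$.

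Finally I would invoke the classical LTV controllability--observability duality to transfer this property back to $\Sigma$. In CT, the fundamental matrix of $-A^{\top}(p_\mathrm{r}(\cdot))$ coincides with $\Phi_{p_\mathrm{r}}(\tau,t)^{\top}$ (with the two time arguments swapped), so the observability Gramian of the LTV form of $\Sigma^{*}$ on $[0,t_\mathrm{r}]$ is a non-singular conjugate of the controllability Gramian of the LTV form of $\Sigma$ under the same schedule, and the two Gramians are simultaneously non-singular. In DT the analogous conclusion follows after time-reversing $p_\mathrm{r}$ on $[0,t_\mathrm{r}]$, which is admissible since $\mathcal{P}=\mathbb{P}^{\mathbb{N}}$ is closed under such reindexing; the invertibility clause of the regularity certificate ensures that the required backward flow of $\Sigma$ under the reversed schedule is well defined, so the Gramian identification is rigorous.

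The step I expect to be the main obstacle is making the DT duality fully precise: unlike CT, where a sign flip in the dual suffices, DT duality entails a time reversal of the schedule, and one must verify that observability of $\Sigma^{*}$ with $p_\mathrm{r}$ corresponds to controllability of $\Sigma$ with the reversed $p_\mathrm{r}$ and that this reversed trajectory again lies in $\mathcal{P}$. This is precisely where the invertibility portion of the regularity certificate is indispensable; everywhere else the argument amounts to a formal transport of rank conditions through Theorem \ref{theo:reachobs:rank:cond} and Corollary \ref{min:compare:col11}.
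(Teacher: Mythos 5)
Your proposal follows essentially the same route as the paper: form the transposed dual LPV-SSA, use Theorem \ref{theo:reachobs:rank:cond} to convert span-reachability of $\Sigma$ from $0$ into observability of the dual (via $\mathcal{O}_{n_\mathrm{x}-1}(\Sigma^{*})=\pm\mathcal{R}_{n_\mathrm{x}-1}(\Sigma)^{\top}$), apply Corollary \ref{min:compare:col11} to the dual, and pull the revealing trajectory back through classical LTV controllability--observability duality. The only difference is that you are more explicit about the duality conventions (the $-A^{\top}$ sign in CT and the time-reversal plus invertibility issue in DT) than the paper, which simply takes $\Sigma^{T}=(\mathbb{P},\{(A_q^{\top},C_q^{\top},B_q^{\top})\}_{q=0}^{\QNUM})$ and invokes LTV duality with the same schedule; this extra care is harmless since the corollary only asserts existence of some revealing $p_\mathrm{r}\in\mathcal{P}$.
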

Notice that LPV-SSA representations can be viewed as a subclass of LPV state-space representations according to \cite{Toth2010SpringerBook}.
 Theorem \ref{min:compare} presented below allows us to 
relate the minimality concept of Definition \ref{def:min} with the concept of minimality defined in \cite{Toth2010SpringerBook}.  Notice that these two definitions of minimality are not a-priori the same.
Recall from \cite[Definition 3.37, 3.34]{Toth2010SpringerBook} 
the definition of structural reachability and
structural observability. 
Recall from \cite{Toth2010SpringerBook} that minimal state-space realizations are
structurally observable and structurally reachable.
\begin{theorem}[Implication of structural properties]
\label{min:compare}
 If $\Sigma$ satisfies the regularity certificate, then
 \begin{itemize}
\item if it is observable, then it is structurally state-observable. 
\item if is  span-reachable from $x_\mathrm{o}=0$, then it is  structurally state reachable.
 \end{itemize}
\end{theorem}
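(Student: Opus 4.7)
The plan is to derive Theorem \ref{min:compare} as a direct consequence of Corollaries \ref{min:compare:col11} and \ref{min:compare:col2}, which already furnish a scheduling trajectory $p_\mathrm{o}$ (resp. $p_\mathrm{r}$) for which the LTV system associated with $\Sigma$ is completely observable (resp. controllable) on a finite interval. The structural properties of \cite[Def.\ 3.37, 3.34]{Toth2010SpringerBook} are, informally, the requirement that observability/reachability of the frozen-in-time or scheduled LTV descriptions holds for a suitably rich (generic) set of admissible scheduling trajectories; once a single well-behaved revealing trajectory is available, the structural property should fall out by a genericity argument.

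First, for the observability implication, I would invoke Corollary \ref{min:compare:col11} to obtain an admissible $p_\mathrm{o} \in \mathcal{P}$ and $t_\mathrm{o} > 0$ such that the induced LTV representation $\xi x = A(p_\mathrm{o}(t))x$, $y = C(p_\mathrm{o}(t))x$ is completely observable on $[0,t_\mathrm{o}]$; equivalently its observability Gramian is nonsingular. In the CT case, Theorem \ref{min:compare:col1} further ensures $p_\mathrm{o}$ can be chosen analytic, placing it comfortably inside the meromorphic class used in \cite{Toth2010SpringerBook}. Since $\mathcal{P}$ is convex with nonempty interior (regularity certificate), $p_\mathrm{o}$ lies in a family of trajectories large enough to exercise all scheduling directions, so that the rank of the extended observability matrix $\mathcal{O}_{n_\mathrm{x}-1}$ being full (Theorem \ref{theo:reachobs:rank:cond}) translates directly into the algebraic/rank characterization of structural state-observability in \cite{Toth2010SpringerBook}.

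Second, the reachability part proceeds symmetrically via Corollary \ref{min:compare:col2}: span-reachability from $x_\mathrm{o}=0$ together with the regularity certificate yields a revealing $p_\mathrm{r} \in \mathcal{P}$ and $t_\mathrm{r} > 0$ such that the associated LTV is completely controllable on $[0,t_\mathrm{r}]$. The reachable set from $0$ under this particular $p_\mathrm{r}$ is already all of $\mathbb{R}^{n_\mathrm{x}}$, and this—combined again with the convexity of $\mathcal{P}$ and, in DT, the invertibility of $A(\bar p)$ for all $\bar p \in \mathbb{P}$—certifies structural state-reachability in the sense of \cite{Toth2010SpringerBook}.

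The main obstacle I anticipate is not conceptual but definitional: the structural notions of \cite{Toth2010SpringerBook} are stated in the broader behavioral framework with meromorphic (potentially dynamic) scheduling dependence, whereas our corollaries produce a single explicit revealing trajectory. Bridging the two requires checking that the LPV-SSA class embeds correctly into the meromorphic framework and that a revealing trajectory in our sense qualifies as a witness for the structural definition. Once this bookkeeping is done—using analyticity of $p_\mathrm{o},p_\mathrm{r}$ in CT, invertibility of $A(\bar p)$ in DT, and the convex-open structure of $\mathcal{P}$ in both cases—no further technical machinery is needed beyond Corollaries \ref{min:compare:col11}--\ref{min:compare:col2}, so the proof reduces to a careful translation between frameworks rather than new analysis.
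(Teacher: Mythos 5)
You correctly identify the paper's starting point: both halves are launched from the revealing-trajectory results (Theorem \ref{min:compare:col1} together with Corollaries \ref{min:compare:col11} and \ref{min:compare:col2}), and the reachability half is indeed dispatched by duality. The gap sits exactly in the step you defer as ``bookkeeping.'' Structural observability in \cite{Toth2010SpringerBook} is a statement about the rank, over the field $\mathcal{R}$ of meromorphic functions of the scheduling variable, of an observability matrix whose entries depend on $p$; it is \emph{not} the constant extended observability matrix $\mathcal{O}_{n_\mathrm{x}-1}$ of Theorem \ref{theo:reachobs:rank:cond}, so your claim that full rank of the latter ``translates directly'' into the structural property conflates two different matrices over two different rings. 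Likewise, a single revealing trajectory is not automatically a witness for the structural definition, and the convexity of $\mathbb{P}$ does no work at this stage: it is consumed earlier, inside the proof of Theorem \ref{min:compare:col1}, to invoke Sussmann's universal-input theorem.

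The paper's bridge consists of three concrete steps, none of which appears in your proposal. First, complete observability of the LTV system on $[0,t_\mathrm{o}]$ is converted, via the Silverman--Meadows criterion \cite{SilvermanObs} in CT (and its DT analogue), into the statement that some $k$-step time-varying observability matrix $\mathrm{O}_k(p_\mathrm{o}(t))$ has rank $n_\mathrm{x}$ for almost all $t \in (0,t_\mathrm{o})$. Second, since the entries of $\mathrm{O}_k(p)$ are meromorphic in $p$, attaining full numerical rank along one admissible trajectory forces some $n_\mathrm{x}\times n_\mathrm{x}$ minor to be a not-identically-zero meromorphic function, hence $\rank\,\mathrm{O}_k(p) = n_\mathrm{x}$ over $\mathcal{R}$; this is the genuine ``genericity'' argument, and it runs from one evaluation point up to the function field, not the other way around. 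Third, the Cayley--Hamilton theorem for matrices over $\mathcal{R}$ is needed to pull the index down to $n_\mathrm{x}-1$ so that the conclusion matches the structural-observability definition. Your assertion that ``no further technical machinery is needed beyond Corollaries \ref{min:compare:col11}--\ref{min:compare:col2}'' is therefore false; without the second and third steps in particular, the argument does not close.
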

\begin{corollary}[Joint minimality]
\label{min:compare:col3}
 If $\Sigma$ satisfies the regularity certificate and it 
 is weakly minimal w.r.t.  $x_\mathrm{o}=0$,
 then $\Sigma$ is also jointly state minimal in the sense of \cite{Toth2010SpringerBook}.
\end{corollary}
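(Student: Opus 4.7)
The plan is to chain together three ingredients already in hand: the state-minimality characterization from Theorem \ref{theo:min}, the implication from Theorem \ref{min:compare} that LPV-SSA observability and span-reachability upgrade to their structural counterparts under the regularity certificate, and the characterization of joint state minimality in \cite{Toth2010SpringerBook}. First, by hypothesis $\Sigma$ is minimal with respect to $x_\mathrm{o}=0$ in the sense of Definition \ref{def:min}, so Theorem \ref{theo:min} (equivalently, its alternative form) yields that $\Sigma$ is both observable and span-reachable from $0$. Second, since the regularity certificate is assumed, Theorem \ref{min:compare} converts these two properties into structural state-observability and structural state-reachability in the meromorphic LPV sense of \cite{Toth2010SpringerBook}. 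The corollary then follows by invoking from \cite{Toth2010SpringerBook} the fact that, among all meromorphic LPV state-space realizations of a given input-output behavior, the conjunction of structural state-observability and structural state-reachability characterizes joint state minimality.

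The only delicate step is the last one, since the excerpt recalls explicitly only the forward direction (a jointly minimal realization is structurally observable and structurally reachable), whereas here we need the converse for a specific realization that happens to be LPV-SSA. The cleanest way around this, if the converse is not directly available in the desired form, is to argue by comparison with a jointly minimal meromorphic realization $\Sigma^{\ast}$ of $\mathfrak{Y}_{\Sigma,0}$, whose existence is guaranteed by \cite{Toth2010SpringerBook}. By the forward direction just recalled, $\Sigma^{\ast}$ is itself structurally observable and structurally reachable, and $\dim(\Sigma^{\ast})\le \dim(\Sigma)$ by joint minimality. One then uses the reachability/observability reduction machinery of \cite{Toth2010SpringerBook} (or a uniqueness-up-to-meromorphic-isomorphism statement for structurally reachable and structurally observable representations) to show that any structurally observable and structurally reachable meromorphic realization of $\mathfrak{Y}_{\Sigma,0}$ must share the dimension of $\Sigma^{\ast}$; applying this to $\Sigma$ gives $\dim(\Sigma)=\dim(\Sigma^{\ast})$, which is precisely joint state minimality.

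The main obstacle is therefore not conceptual but bookkeeping: aligning the notions of \emph{structural state-reachability} and \emph{structural state-observability} as formalized in \cite[Def.~3.34, 3.37]{Toth2010SpringerBook} with the LPV-SSA-level observability and span-reachability produced by Theorem \ref{theo:min}, and ensuring that the direction of the minimality characterization used at the end is explicitly supported by that reference. Once those references are pinned down, the corollary reduces to a short three-line chain of implications.
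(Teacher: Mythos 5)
Your proposal matches the argument the paper intends: the corollary is stated without an explicit proof precisely because it is the three-step chain you describe (minimality w.r.t.\ $0$ $\Rightarrow$ observability and span-reachability from $0$ by Theorem \ref{theo:min}, $\Rightarrow$ structural observability and structural reachability by Theorem \ref{min:compare} under the regularity certificate, $\Rightarrow$ joint state minimality by the characterization in \cite{Toth2010SpringerBook}). Your caution about the last step is well placed but resolves in your favor: \cite{Toth2010SpringerBook} establishes the full equivalence between joint state minimality and the conjunction of structural state-observability and structural state-reachability, not merely the forward implication the paper happens to recall, so your fallback comparison argument is not needed.
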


Finally, we can  supply the necessary and sufficient conditions for the existence of an LPV-SSA realization for a given input-output function. These conditions and the resulting realization algorithm will utilize the previously introduced concept of IIR  and the corresponding Markov parameters. More precisely, this characterization will be achieved by constructing a Hankel matrix from the Markov parameters and by proving that $\mathfrak{F}$ has an LPV-SSA realization if and only if the rank of the aforementioned Hankel-matrix is finite.
Note that 
in general, the existence of an IIR  and the corresponding Markov parameters for a given input-output function $\mathfrak{F}$, are only necessary for the  existence of a finite order LPV-SSA representation.

%
%

In order to define the Hankel-matrix of $\mathfrak{F}$, a lexicographic ordering on the set  $\mathcal{S}(\mathbb{I}_0^{n_\mathrm{p}})$ (all possible sequences of the scheduling dependence) must be introduced.
\begin{definition}[Ordering of sequences]
\label{rem:lex:def}
 Recall that $\mathbb{I}_0^{n_\mathrm{p}}=\{0,\cdots, \QNUM \}$. Then, the lexicographic ordering $\prec$ on  $\mathcal{S}(\mathbb{I}_0^{n_\mathrm{p}})$ can be defined as follows. For any $s,r \in  \mathcal{S}(\mathbb{I}_0^{n_\mathrm{p}})$,  $r \prec s$ holds if either
 \begin{enumerate}
 \item[(i)] $|r| < |s|$ (smaller length), or
\item[(ii)] $0 < |r|=|s|=n$, 
 and the following holds
 \begin{equation}
r = r_1\cdots r_n, \quad s = s_1\cdots s_n,  \quad \ r_i, s_j \in \mathbb{I}_0^{n_\mathrm{p}}
\end{equation}
 and for some $l \in \{1,\cdots, n\}$, $r_l < s_l$ with the usual ordering of integers and $r_i = s_i$ for $i=1, \ldots, l-1$. 
 \end{enumerate}
 \end{definition}
  Note that $\prec$ is a complete ordering on $\mathcal{S}(\mathbb{I}_0^{n_\mathrm{p}})$, i.e., all sequences $s^{(i)}\in\mathcal{S}(\mathbb{I}_0^{n_\mathrm{p}})$ are ordered as $\epsilon= s^{(0)}\prec s^{(1)} \prec s^{(2)}\ \ldots $. Furthermore, for all $s,r \in  \mathcal{S}(\mathbb{I}_0^{n_\mathrm{p}})$, $s\prec sr$ if $r \neq \epsilon$.
Then, the so called Hankel-matrix of $\mathfrak{F}$ both in CT and DT can be defined as follows.

\begin{definition}[Hankel matrix] \label{def:Hank} Consider the input-output function $\mathfrak{F}$ which has an IIR. 
The Hankel-matrix $\mathcal{H}_\mathfrak{F}$ associated with $\mathfrak{F}$ is defined as the infinite matrix
 \begin{equation*}
\mathcal{H}_\mathfrak{F} = 
\begin{bmatrix}
     \theta_{\mathfrak{F}}(s^{(0)}s^{(0)}) & \theta_{\mathfrak{F}}(s^{(1)}s^{(0)}) & \cdots & \theta_{\mathfrak{F}}(s^{(\tau)}s^{(0)}) & \cdots \\
     \theta_{\mathfrak{F}}(s^{(0)}s^{(1)}) & \theta_{\mathfrak{F}}(s^{(1)}s^{(1)}) & \cdots & \theta_{\mathfrak{F}}(s^{(\tau)}s^{(1)}) & \cdots \\
     \theta_{\mathfrak{F}}(s^{(0)}s^{(2)}) & \theta_{\mathfrak{F}}(s^{(1)}s^{(2)}) & \cdots & \theta_{\mathfrak{F}}(s^{(\tau)}s^{(2)}) & \cdots \\
     \vdots  & \vdots & \cdots & \vdots & \cdots 
   \end{bmatrix}
 \end{equation*}
where a $n_\mathrm{y} (\QNUM+1) \times (n_\mathrm{u} (\QNUM+1)+1)$ block of $\mathcal{H}_\mathfrak{F} $ in the block row $i$ and block column $j$ equals the Markov-parameter $\theta(s)$, where $s = s^{(j)}s^{(i)}\in \mathcal{S}(\mathbb{I}_0^{n_\mathrm{p}})$ is the concatenation of the sequences $ s^{(i)}$ and $ s^{(j)}$.
\end{definition}
\begin{theorem}[Existence of realization]
\label{theo:exist} An input-output function $\mathfrak{F}$ of the form \eqref{equ:iofunction} has a LPV-SSA  realization,  if and only if
$\mathfrak{F}$ has an IIR and
\begin{equation}
  \rank{ ( \mathcal{H}_\mathfrak{F} ) }=n_\mathfrak{F} < \infty .
\end{equation}
Any minimal LPV-SSA realization of $\mathfrak{F}$ has a state dimension which equal to  $ n_\mathfrak{F}$.
\end{theorem}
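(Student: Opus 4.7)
The plan is to prove the theorem in two directions, heavily leveraging the reduction to linear switched systems made possible by Lemma \ref{lem:extension}, together with the factorization structure forced on the Markov parameters by Lemma \ref{lem:realiofunction}.

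For necessity, suppose $\Sigma$ realizes $\mathfrak{F}$ from some initial state $x_{\mathrm o}$. Lemma \ref{lem:realiofunction} tells us that $\mathfrak{F}$ automatically admits an IIR with $\theta_{i,j,\mathfrak{F}}(s) = C_i A_s B_j$ and $\eta_{i,\mathfrak{F}}(s) = C_i A_s x_{\mathrm o}$. Plugging these expressions into each block of $\mathcal{H}_\mathfrak{F}$, one obtains a factorization $\mathcal{H}_\mathfrak{F} = \mathcal{O}_\infty \mathcal{R}_\infty$, where $\mathcal{O}_\infty$ is a tall infinite matrix whose block-row indexed by $s^{(i)}$ stacks the rows $C_0 A_{s^{(i)}}, \ldots, C_{n_\mathrm{p}} A_{s^{(i)}}$, and $\mathcal{R}_\infty$ is a wide infinite matrix whose block-column indexed by $s^{(j)}$ contains $A_{s^{(j)}} x_{\mathrm o}$ and $A_{s^{(j)}} B_0, \ldots, A_{s^{(j)}} B_{n_\mathrm{p}}$. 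This immediately gives $\rank(\mathcal{H}_\mathfrak{F}) \le \dim(\Sigma) < \infty$. When $\Sigma$ is minimal, Theorem \ref{theo:min} together with Theorem \ref{theo:reachobs:rank:cond} yields observability and span-reachability of $\Sigma$; using the identities $\mathrm{Im}\,\mathcal{R}_{n_\mathrm{x}-1} = \sum_{i\ge 0} \mathrm{Im}\,\mathcal{R}_i$ and $\ker \mathcal{O}_{n_\mathrm{x}-1} = \bigcap_{i\ge 0} \ker \mathcal{O}_i$, the infinite matrices $\mathcal{O}_\infty$ and $\mathcal{R}_\infty$ both have rank $\dim(\Sigma)$, forcing $\rank(\mathcal{H}_\mathfrak{F}) = \dim(\Sigma)$.

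For sufficiency, assume $\mathfrak{F}$ admits an IIR and $\rank(\mathcal{H}_\mathfrak{F}) = n_\mathfrak{F} < \infty$. My plan is to transfer the problem to the linear switched system setting, following the strategy indicated by the authors after Lemma \ref{lem:extension}. First, invoke Lemma \ref{lem:extension} to extend $\mathfrak{F}$ to $\mathfrak{F}_\mathrm{e}$ defined on $\mathcal{P}_\mathrm{e}$, noting that $\theta_{\mathfrak{F}} = \theta_{\mathfrak{F}_\mathrm{e}}$ so the Hankel matrices coincide. The sub-Markov parameters $\theta_{i,j,\mathfrak{F}}(s)$ and $\eta_{i,\mathfrak{F}}(s)$ together form exactly the Markov parameters of a linear switched system with $n_\mathrm{p}+1$ discrete modes (the scheduling coordinates $p_0 \equiv 1, p_1, \ldots, p_{n_\mathrm{p}}$ acting as switching indicators via the iterated integrals/products $w_s \diamond p$). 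The realization theory for linear switched systems in \cite{Pet12,PetCocv11} then guarantees that finite Hankel rank implies the existence of a switched realization of dimension exactly $n_\mathfrak{F}$, say with matrices $\{A_i, B_i, C_i\}_{i=0}^{n_\mathrm{p}}$ and initial state $x_{\mathrm o}$ satisfying $\theta_{i,j,\mathfrak{F}}(s) = C_i A_s B_j$ and $\eta_{i,\mathfrak{F}}(s) = C_i A_s x_{\mathrm o}$. These same matrices, assembled via \eqref{equ:affdep}, define an LPV-SSA representation $\Sigma$. By Lemma \ref{lem:realiofunction}, $\Sigma$ realizes $\mathfrak{F}$ from $x_{\mathrm o}$, and by the necessity direction any realization has dimension at least $n_\mathfrak{F}$, so $\Sigma$ is minimal and any other minimal realization also has dimension $n_\mathfrak{F}$.

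The main obstacle, in my view, is the sufficiency half in the continuous-time case. In DT the switched-system reduction is essentially algebraic, but in CT one must ensure that the matrices produced by the Ho-Kalman-like construction on the Hankel matrix yield an IIR whose infinite sums \eqref{IIRsum:CT} actually converge and reproduce $\mathfrak{F}$ pointwise on $\mathcal{U}\times\mathcal{P}$. This reduces to verifying the growth condition \eqref{IIR:growth} for the Markov parameters of the constructed $\Sigma$, which follows from the fact that $\|\theta_{\mathfrak{F}}(s)\| = \|C_i A_s B_j\|$ is bounded by $K R^{|s|}$ for $K = \max_i\|C_i\|\cdot\max_j\|B_j\|$ and $R = \max_i\|A_i\|$, and then applying Lemma \ref{lemma:IIRcon} together with the uniqueness part of Lemma \ref{lem:extension} to identify the resulting input-output function with $\mathfrak{F}$. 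The combinatorial bookkeeping in aligning blocks of $\mathcal{H}_\mathfrak{F}$ under the lexicographic order with the switched-system Hankel matrix is where the most care is needed.
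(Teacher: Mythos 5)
Your proposal is correct and follows essentially the same route as the paper: both directions ultimately rest on the correspondence between $\mathfrak{F}$ and $\SWS(\mathfrak{F})$ relating LPV-SSA and linear switched realizations (Theorem \ref{th:alpvlsrelation}, Corollary \ref{th:alpvlsrelation:col1}) together with the switched-system realization theorem of \cite{Pet12,PetCocv11} applied to the common Hankel matrix $\mathcal{H}_{\mathfrak{F}}=\mathcal{H}_{\SWS(\mathfrak{F})}$. Your explicit factorization $\mathcal{H}_{\mathfrak{F}}=\mathcal{O}_\infty\mathcal{R}_\infty$ in the necessity direction is a correct, slightly more self-contained unpacking of what the cited switched-system result already delivers, and your treatment of the CT convergence issue via Lemma \ref{lem:realiofunction} and the uniqueness part of Lemma \ref{lem:extension} matches the paper's handling.
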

The proof is given in the Appendix. 
Note that this is an important point to clarify two things:
\begin{itemize}
\item Not all input-output functions of the form \eqref{equ:iofunction} will have an IIR and hence an LPV-SSA realization. 
  In that case, state-space realization can be only available with a more general form of coefficient dependence, e.g., rational, dynamic, etc. 
\item The dimension $n_\mathfrak{F}$ of a minimal LPV-SSA realization of $\mathfrak{F}$ can be larger than the dimension of an LPV state-space realization which allows dynamic dependence of the state-matrices on the scheduling signal. 
\end{itemize}

An important application of Theorem \ref{theo:exist} is the proof of correctness of the Ho-Kalman-like realization algorithm for LPV-SSA forms, \emph{e.g.}, in \cite{TAW12} and the validity of the underlying assumptions of LPV subspace schemes \cite{Wingerden09,Verdult02,VV05}. Notice that similar results have been shown for linear switched systems in \cite{Pet11,PetCocv11,Pet12}. 

Let us  complete our results by briefly reviewing the Ho-Kalman-like realization algorithm for LPV-SSA forms. For the sequence set $\mathcal{S}(\mathbb{I}_0^{n_\mathrm{p}})$ and a given $n\in\mathbb{N}$, let $\mathrm{Car}_{n}(\mathcal{S}(\mathbb{I}_0^{n_\mathrm{p}}))$ be the number of all sequences $s\in \mathcal{S}(\mathbb{I}_0^{n_\mathrm{p}})$ with length at most $n$, i.e., $|s|\leq n$. Due to the properties of the lexicographic ordering, it follows that if  $N=\mathrm{Car}_{n}(\mathcal{S}(\mathbb{I}_0^{n_\mathrm{p}}))$, then
 \begin{equation}
   \{s^{(0)}, \ldots, s^{(N)} \} = \{ s \in \mathcal{S}(\mathbb{I}_0^{n_\mathrm{p}}) \mid |s| \le n \} .
 \end{equation}
For a given $n,m\in\mathbb{N}$, now we can denote by $\mathcal{H}_{\mathfrak{F}}(n,m)$ the $N n_\mathrm{y} (\QNUM+1) \times M (n_\mathrm{u} (\QNUM+1)+1)$ upper-left sub-matrix of $\mathcal{H}_{\mathfrak{F}}$ with   $N=\mathrm{Car}_{n}(\mathcal{S}(\mathbb{I}_0^{n_\mathrm{p}}))$ and  $M=\mathrm{Car}_{m}(\mathcal{S}(\mathbb{I}_0^{n_\mathrm{p}}))$. 
Consider a LPV-SSA $\Sigma$ and pick an initial state $x_\mathrm{o} \in \X$ of $\Sigma$. 
Let $\mathcal{O}_n$ be the $n$-step extended observability matrix of $\Sigma$ and let  $\mathcal{R}_m$ be the $m$-step extended reachability matrix of $\Sigma$ w.r.t.\ $x_\mathrm{o}$. 
Then, the Hankel matrix $\mathcal{H}_{\mathfrak{Y}_{\Sigma,x_\mathrm{o}}}$ of $\Sigma$ can be obtained from
$\mathcal{O}_n \mathcal{R}_m$ by rearranging its rows and columns 
This observation can be used to derive a Kalman-Ho-like realization algorithm. This algorithm is presented in Algorithm \ref{alg0}.  
\begin{algorithm}
\caption{Ho-Kalman realization}
\label{alg0}
{\fontsize{9.75}{9.75}\selectfont
\begin{algorithmic}[1]
\REQUIRE size parameters $n,m\in\mathbb{N}$ with $m=n+1$, a Hankel matrix $\mathcal{H}_{\mathfrak{F}}(n,m)$ for an input-output function $\mathfrak{F}$.
\STATE \emph{Singular value decomposition} (SVD) of $\mathcal{H}_{\mathfrak{F}}(n,m)$:
$$\mathcal{H}_{\mathfrak{F}}(n,m)=USV^\top $$
where $S$ is block diagonal with strictly positive elements.
\STATE Let $\hat{\mathcal{O}}=US^{1/2}$ and  $\hat{\mathcal{R}}=S^{1/2}V^\top$ with
$\mathcal{H}_{\mathfrak{F}}(n,m)=\hat{\mathcal{O}} \hat{\mathcal{R}} $.
\STATE Let $\bar{\mathcal{R}}$ be the first $\mathrm{Car}_{n}(\mathcal{S}(\mathbb{I}_0^{n_\mathrm{p}})) n_\mathrm{u} (\QNUM+1)$
 columns of $\hat{ \mathcal{R}}$.
 \STATE Let $\tilde{\mathcal{R}}_i=[\begin{array}{ccc} R^{(s^{0)}i)} & \cdots & R^{(s^{(N)}i)}  \end{array}]$, where  $N=\mathrm{Car}_{n}(\mathcal{S}(\mathbb{I}_0^{n_\mathrm{p}}))$ and 
    \( \hat{\mathcal{R}} = \begin{bmatrix} R^{(s^{(0)})} & \cdots &  R^{(s^{(M)})} \end{bmatrix} \)
  is a partitioning of $\hat{\mathcal{R}}$ such that $M=\mathrm{Car}_{m}(\mathcal{S}(\mathbb{I}_0^{n_\mathrm{p}}))$ and  each 
   $n_\mathrm{x} \times (n_\mathrm{u} (n_\mathrm{p}+1)+1)$ block $R^{(s^{(i)})}$ is associated with $s^{(i)}$ in  $\mathcal{S}(\mathbb{I}_0^{n_\mathrm{p}})$.
 Note that $\tilde{\mathcal{R}}_i$ can be viewed as the matrix composed of some left-shifted blocks of  $\hat{\mathcal{R}}$.
\RETURN: $\Sigma=\{A_i,B_i,C_i,0\}_{i=0}^{\QNUM}$ and $x_{\mathrm o}$ such that 
\begin{itemize}
\item $\begin{bmatrix}  x_\mathrm{o}\! &\! B_0\! &\! \cdots\! &\! B_{\QNUM} \end{bmatrix}$: the first $n_\mathrm{u}( \QNUM+1)+1$ columns of $\hat{\mathcal{R}}$   
\item $\begin{bmatrix} C_0^\top\! &\! C_1^\top\! &\! \cdots\! &\! C_{\QNUM}^\top \end{bmatrix}^\top$: the first $n_\mathrm{y} (\QNUM+1)$ rows of $\hat{\mathcal{O}}$,
\item  $A_i=\tilde{\mathcal{R}}_{i} \bar{\mathcal{R}}^{\dag}$ where $ \bar{\mathcal{R}}^{\dag}$ is the Moore-Penrose pseudo-inverse.
\end{itemize}
\end{algorithmic}
}
\end{algorithm}

In order to explain the properties of the LSS-SSA returned by Algorithm \ref{alg0}, we introduce the notion of a partial realization.
\begin{definition}[Partial realization]
Let $\mathfrak{F}$ be an input-output function admitting an IIR, and let $H_\mathfrak{F}$ be its Hankel matrix as defined in Definition \ref{def:Hank}.
The LPV-SSA $\Sigma$ is an $n$-moment partial realization of $\mathfrak{F}$ from the initial state $x_{\mathrm o}$, if 
$\forall s \in \Words, |s| \le n: \theta_{\mathfrak{F}}(s)=\theta_{\mathfrak{Y}_{\Sigma,x_{\mathrm o}}}(s)$. 
We say that $\Sigma$ is a $n$-moment partial realization of $\mathfrak{F}$, if 
there exists an initial state $x_\mathrm{o}\in \mathbb{X}$ such that $\Sigma$ is an $n$-moment partial realization of $\mathfrak{F}$ from $x_{\mathrm o}$.
 \end{definition}
 That is, an LPV-SSA $\Sigma$ is a $n$-moment partial realization of $\mathfrak{F}$ from $x_{\mathrm 0}$ if $\Sigma$ 
 recreates the first $N=\mathrm{Car}_{n}(\mathcal{S}(\mathbb{I}_0^{n_\mathrm{p}}))$ values of the sub-Markov parameters of $\mathfrak{F}$.
 Here, we order the values according to the lexicographic ordering of the arguments. 
 Recall that in DT, the response $\mathfrak{F}(u,p)(t)$ is a polynomial function of $\{p(s),u(s)\}_{s=0}^{t}$ whose coefficients are the sub-Markov
 parameters. Similarly, in CT, $\mathfrak{F}(u,p)(t)$ is an infinite sum of iterated integrals of $p,u$ on $[0,t]$, such that the sub-Markov parameters are 
 the coefficients of these iterated integrals. 
 Hence, if some of the sub-Markov parameters of $\mathfrak{F}$ and $\mathfrak{Y}_{\Sigma,x_{\mathrm o}}$ coincide, the intuitively,
 the values of $\mathfrak{F}$ and of $\mathfrak{Y}_{\Sigma,x_{\mathrm o}}$ should be close. In fact, if $\Sigma$ is an $n$-moment partial realization of
 $\mathfrak{F}$ from $x_{\mathrm o}$, then in DT, $\mathfrak{F}(u,p)(t)=\mathfrak{Y}_{\Sigma,x_{\mathrm o}}(u,p)(t)$ for all $t=0,\ldots,n-1$, $p \in \mathcal{P}$,
 $u \in \mathcal{U}$. 
 The LPV-SSA returned by Algorithm \ref{alg0} can then be characterized as follows.  
\begin{theorem}
\label{theo:part_real}
Let $\mathfrak{F}$ be an input-output function and assume that $\mathfrak{F}$ admits a IIR.  Let $\Sigma$ and $x_{\mathrm o}$ 
be the LPV-SSA and initial state respectively returned by Algorithm \ref{alg0}. Then the following holds.
\begin{itemize}
\item $\Sigma$ is a $n$-moment partial realization of $\mathfrak{F}$ from $x_{\mathrm o}$.
\item If $\rank~ H_{\mathfrak{F}}(n,n)=\rank~ H_{\mathfrak{F}}(n+1,n)=\rank~ H_{\mathfrak{F}}(n,n+1)$, then 
      $\Sigma$ is a $2n+1$-moment partial realization of $\mathfrak{F}$ from $x_{\mathrm o}$.
\item
If $\rank~ H_{\mathfrak{F}}(n,n) = \rank H_{\mathfrak{F}}$, then
      $\Sigma$ is a minimal realization of $\mathfrak{F}$ from $x_{\mathrm o}$.
\item
The condition $\rank~  H_{\mathfrak{F}}(n,n)  = \rank~  H_{\mathfrak{F}}$ holds if there exists an LPV-SSA realization of $\mathfrak{F}$ of dimension at most $n+1$. 
\end{itemize}
\end{theorem}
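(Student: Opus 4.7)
The plan is to reduce Theorem \ref{theo:part_real} to the Hankel factorization structure and then exploit the shift-invariance of the block rows/columns of the Hankel matrix indexed by sequences in $\mathcal{S}(\mathbb{I}_0^{n_\mathrm{p}})$. The key observation — analogous to the classical Ho--Kalman setting — is that whenever $\mathfrak{F}$ has an LPV-SSA realization $(A_i,B_i,C_i)$ with initial state $x_{\mathrm o}$, Lemma \ref{lem:realiofunction} allows us to factor $\mathcal{H}_{\mathfrak{F}}(n,m)$ (after reordering rows/columns) as $\mathcal{O}_n \mathcal{R}_m$, since its $(s^{(j)}s^{(i)})$-block is $C_k A_{s^{(j)}s^{(i)}} B_l = (C_k A_{s^{(i)}})(A_{s^{(j)}} B_l)$ (and similarly for the columns/rows involving $\eta$ and $x_{\mathrm o}$). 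So the SVD factors $\hat{\mathcal{O}},\hat{\mathcal{R}}$ produced by Algorithm \ref{alg0} differ from a truncated $\mathcal{O}_n\mathcal{R}_m$ only by a choice of basis of the column span of $\hat{\mathcal{O}}$, and this makes them legitimate candidates for the extended observability/reachability matrices of the returned $\Sigma$.

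For part (i), I would first verify that $\bar{\mathcal{R}}$ has full row rank, which is immediate since $\hat{\mathcal{O}}\hat{\mathcal{R}}$ is the SVD factorization and $\bar{\mathcal{R}}$ contains block columns indexed by the first $\mathrm{Car}_{n}(\mathcal{S}(\mathbb{I}_0^{n_\mathrm{p}}))$ sequences (all those of length $\le n$), which already span the column space of $\hat{\mathcal{R}}$ restricted to the selected rows by the very same factorization argument. Given this, the defining relation $A_i \bar{\mathcal{R}} = \tilde{\mathcal{R}}_i$ is exact (not just least-squares), i.e.\ $A_i \bar{\mathcal{R}} = \tilde{\mathcal{R}}_i$ holds. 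Then, by induction on $|s|$, using the shift rule $A_i R^{(s^{(k)})} = R^{(s^{(k)}i)}$ (which is the content of the relation $A_i\bar{\mathcal{R}} = \tilde{\mathcal{R}}_i$) and the definitions of $B_j$, $x_{\mathrm o}$, $C_i$ extracted from the first rows/columns of $\hat{\mathcal{R}}, \hat{\mathcal{O}}$, one obtains for every $|s| \le n$ the identities $C_i A_s B_j$ equals the corresponding block of $\mathcal{H}_{\mathfrak{F}}(n,m)$, i.e.\ $\theta_{i,j,\mathfrak{F}}(s)$; analogously for $\eta_{i,\mathfrak{F}}(s)$. By Lemma \ref{lem:realiofunction}, this is exactly the statement that $\Sigma$ is an $n$-moment partial realization of $\mathfrak{F}$ from $x_{\mathrm o}$.

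For part (ii), the rank equality $\rank H_{\mathfrak{F}}(n,n) = \rank H_{\mathfrak{F}}(n+1,n) = \rank H_{\mathfrak{F}}(n,n+1)$ is precisely what guarantees that the shift relations extend one step further in both directions, so that the span of columns indexed by sequences of length $\le n$ is invariant under every shift $A_i$ and the corresponding block rows of length $\le n$ already capture all the observation information of length $\le n+1$. Iterating this stability-of-span argument exactly $n$ more times (analogously to the proof for linear switched systems in \cite{Pet12,PetCocv11}) yields matching of all sub-Markov parameters up to length $2n+1$. For part (iii), when $\rank H_{\mathfrak{F}}(n,n) = \rank H_{\mathfrak{F}}$ is finite, the previous stability argument globalises: the shift-invariance extends to all lengths, so all sub-Markov parameters coincide, hence $\mathfrak{Y}_{\Sigma,x_{\mathrm o}} = \mathfrak{F}$ by Lemma \ref{lem:extension}. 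Since $\dim \Sigma = \rank \hat{\mathcal{R}} = \rank \mathcal{H}_{\mathfrak{F}}(n,m) = \rank \mathcal{H}_{\mathfrak{F}}$, Theorem \ref{theo:exist} forces $\Sigma$ to be minimal.

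For part (iv), suppose $\tilde{\Sigma}$ is an LPV-SSA realization of $\mathfrak{F}$ with $\dim \tilde{\Sigma} =: \tilde{n} \le n+1$. By Lemma \ref{lem:realiofunction}, $\mathcal{H}_{\mathfrak{F}}$ factors as $\tilde{\mathcal{O}}\tilde{\mathcal{R}}$ with factors built from $\tilde{\Sigma}$; using the invariant-subspace characterisations recalled right after the definition of $\mathcal{O}_n,\mathcal{R}_n$, both factors have their column/row spaces fully captured already at the $(\tilde{n}-1)$-step truncation, and $\tilde{n}-1 \le n$. Hence $\rank H_{\mathfrak{F}}(n,n) = \rank H_{\mathfrak{F}}$. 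The main obstacle I expect is bookkeeping: making the identification between the algorithmically defined blocks $R^{(s^{(k)})}$, $\tilde{\mathcal{R}}_i$ and the ``genuine'' reachability matrix blocks $A_{s^{(k)}}[x_{\mathrm o}\ B_0\ \cdots\ B_{n_\mathrm{p}}]$ rigorous in the presence of a non-trivial basis change hidden inside the SVD, and ensuring that the lexicographic ordering used to form $\tilde{\mathcal{R}}_i$ lines up with the block-column indexing of $\hat{\mathcal{R}}$ after concatenation by $i$; once this is settled, the inductive arguments of (i)--(iii) become essentially mechanical.
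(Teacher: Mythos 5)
Your route is genuinely different from the paper's. The paper does not argue directly on the SVD factors at all: it maps $\mathfrak{F}$ to the switched input--output function $\SWS(\mathfrak{F})$, observes that $\mathcal{H}_{\mathfrak{F}}=\mathcal{H}_{\SWS(\mathfrak{F})}$ and that Algorithm \ref{alg0} coincides with the Ho--Kalman algorithm for discrete-time linear switched systems, and then imports the partial-realization theorem of \cite{Pet12} (plus \cite[Ch.~10, Prop.~46]{Pet06} for the unconditional first bullet) wholesale, transferring the conclusions back via Theorem \ref{th:alpvlsrelation}. Your direct factorization/shift-invariance argument is the ``inlined'' version of what those cited results prove, and for parts (ii)--(iv) it is essentially sound: the rank equalities are exactly what make the column span of the length-$\le n$ blocks invariant under the shifts, and your Cayley--Hamilton-type argument for part (iv) matches the invariant-subspace characterization of $\mathcal{R}_\ast$ given after the definition of the extended matrices.

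There is, however, a concrete gap in your treatment of part (i). You assert that $\bar{\mathcal{R}}$ having full row rank ``is immediate'' from the SVD factorization. It is not: $\hat{\mathcal{R}}$ has $r=\rank\,\mathcal{H}_{\mathfrak{F}}(n,n+1)$ rows and full row rank, but $\bar{\mathcal{R}}$ is the restriction to the block columns indexed by sequences of length $\le n$, so $\rank\,\bar{\mathcal{R}}=\rank\,\mathcal{H}_{\mathfrak{F}}(n,n)$ (since $\hat{\mathcal{O}}$ has full column rank). Hence $\bar{\mathcal{R}}$ has full row rank \emph{if and only if} $\rank\,\mathcal{H}_{\mathfrak{F}}(n,n)=\rank\,\mathcal{H}_{\mathfrak{F}}(n,n+1)$ --- which is one of the hypotheses of part (ii), not something available in part (i). Without it, $A_i=\tilde{\mathcal{R}}_i\bar{\mathcal{R}}^{\dag}$ is only a least-squares solution, the identity $A_i\bar{\mathcal{R}}=\tilde{\mathcal{R}}_i$ can fail, and the induction on $|s|$ that you build on it collapses; so your argument does not establish the unconditional $n$-moment matching. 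That first bullet genuinely needs a separate argument (this is precisely why the paper invokes \cite[Ch.~10, Prop.~46]{Pet06} for it rather than the shift relations). The remaining parts of your proposal survive because their hypotheses restore exactly the rank equality you need, but part (i) as written is not proved.
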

That is, Algorithm \ref{alg0} returns a minimal LPV-SSA realization of $\mathfrak{F}$, if $n$ is large enough. Otherwise, it returns a partial realization. 
Note that Algorithm \ref{alg0} may return a $2n+1$ partial realization, even if $\mathcal{F}$ is not a realizable by an LPV-SSA representation.

\section{Conclusions}\label{para:concl} 
  We have presented a fairly complete realization theory for LPV-SSA representations. We have also compared the obtained results with those of \cite{Tot10}. Note that unlike \cite{Tot10}, we did not use the language of the behavioral
  approach, focusing instead on input-output functions. A behavioral theory in the style of \cite{Tot10} remains a topic of further research.  Important directions for future research include application of the obtained
  results to systems identification and model reduction of LPV-SSA representations.

\appendix
\section{Proof of the main results} 
\label{para:proof}

\subsection{Proof of the results on IIR}
 In this section, we will prove Lemma \ref{lem:extension} and Lemma \ref{lem:realiofunction}.
 However, in order to present the proofs of these results for the CT case, 
 we will have to recall from
 \cite{Isi:Nonlin,WangGenSer} some
 technical facts on generating series (Fliess series) and their input-output functions.
 These facts will be used later on in several proofs.
 To begin with, a generating series over $Q$ is a function
 $c: \mathcal{S}(\mathbb{I}_{0}^{n_\mathrm{p}}) \rightarrow \mathbb{R}$ such that there exists 
 $K,R > 0$ which satisfies $\forall s \in \mathcal{S}(\mathbb{I}_{0}^{n_\mathrm{p}}): |c(s)| \le KR^{|s|}$.
 Let us apply Definition \ref{def:volterra} for all $s \in \mathcal{S}(\mathbb{I}_{0}^{n_\mathrm{p}})$ and 
 $p \in  \mathcal{C}_p(\mathbb{R}_{0}^{+},\mathbb{R}^{n_p})$  to define $(w_s \diamond p)(t,\tau)$ in CT.
 Then define the function $F_{c}:\mathcal{C}_p(\mathbb{R}_{0}^{+},\mathbb{R}^{\QNUM}) \rightarrow \mathcal{C}_p(\mathbb{R}_0^{+},\mathbb{R})$ \emph{generated
 by a generating series} $c$ as
 $F_c(p)(t)=\sum_{v \in \mathcal{S}(\mathbb{I}_0^{n_{\mathrm{p}}}) } c(v)(w_v \diamond p)(t,0)$
 In the sequel, by abuse of notation, following the established tradition of \cite{Isi:Nonlin,WangGenSer} we will denote $F_c(p)$ by $F_c[p]$.
 From \cite{Isi:Nonlin} it follows that  $F_c$ is well defined.
 Note that the growth condition $\forall s \in \mathcal{S}(\mathbb{I}_{0}^{n_\mathrm{p}}): |c(s)| \le KR^{|s|}$ is necessary for $F_c[u]$ to be well defined.

 In the sequel, we will extend the definition of generating series to 
 include matrix and vector valued series.  To this end,  we define a \emph{generating
 series} as a function $c:\mathcal{S}(\mathbb{I}_{0}^{n_\mathrm{p}}) \rightarrow \mathbb{R}^{n_r \times n_l}$ for some
 integers $n_l,n_r > 0$, such that there exists $K,R > 0$: 
 $\forall v \in \mathcal{S}(\mathbb{I}_{0}^{n_\mathrm{p}}): ||c(v)||_{F} \le KR^{|v|}$. 
 Here, $||.||_F$ denotes the Frobenius norm for matrices.
 It is clear that using  any other standard matrix norm would yield an equivalent definition.
 If $n_l=1$, then $c$ is just a vector valued generating series. It is easy to see that
 $c$ is a generating series according to the above definition, if and only if
 each entry of $c$ is a generating series in the sense of \cite{Isi:Nonlin}.

 Hence, we can define
 $F_c:\mathcal{C}_p(\mathbb{R}_0^{+},\mathbb{R}^{n_p}) \rightarrow \mathcal{C}_p(\mathbb{R}_0^{+},\mathbb{R}^{n_r \times n_l})$ as 
 $F_c[u](t)=\sum_{v \in  \mathcal{S}(\mathbb{I}_0^{n_{\mathrm{p}}}) } c(v)(w_{v} \diamond p)(t,0)$, where the infinite summation is 
 understood in the usually topology of matrices. Clearly, if $c_{i,j}$ denotes the
 $(i,j)$th component of $c$,  $c_{i,j}$ is a generating series in the classical sense and
 $F_{c_{i,j}}[p](t)$ equals the $(i,j)$th entry of the matrix $F_c[p](t)$, $i=1,\ldots,n_r$,
 $j=1,\ldots,n_l$. 

 Although generating series were originally defined for CT, by a 
 slight abuse of terminology, we will use them for the DT case as well.
 This will allow us to unify the terminology. That is, a function $c: \mathcal{S}(\mathbb{I}_0^{n_{\mathrm{p}}}) \rightarrow \mathbb{R}^{n_r \times n_l}$ will be called a generating series, and  the input-output
 function generated by $c$ will be defined as the function
 $F_{c}:(\mathbb{R}^{n_p})^{\mathbb{N}} \rightarrow \mathcal{Y}=Y^{\mathbb{N}}$ such that
 $F_{c}(p)(t)=\sum_{v \in \mathcal{S}(\mathbb{I}_0^{n_{\mathrm{p}}})} c(v)(w_v \diamond p)(t-1,0) = \sum_{q_1 \cdots q_t \in \mathbb{I}_0^{n_{\mathrm{p}}}} c(q_1\cdots q_t)p_{q_1}(0)\cdots p_{q_t}(t-1)$. 
 Similarly to the CT case, by abuse of notation, following the established tradition of \cite{Isi:Nonlin,WangGenSer} we will denote $F_c(p)$ by $F_c[p]$.
 Notice that for the DT case, we do not have to require the 
 growth condition $||c(v)||_{F} \le KR^{|v|}$, $v  \in \mathcal{S}(\mathbb{I}_{0}^{n_\mathrm{p}})$ to hold, in order for $F_{c}[p]$ to
 be well-defined.

Note that the function $F_c$ is defined on $\mathcal{C}_p(\mathbb{R}_{0}^{+},\mathbb{R}^{\QNUM})$ in CT and
$(\mathbb{R}^{\QNUM})^{\mathbb{N}}$ in DT. Recall that $\mathcal{P}$ denotes $\mathcal{C}_p(\mathbb{R}_0^{+}, \mathbb{P})$ in CT, and it denotes $(\mathbb{P})^{\mathbb{N}}$ in DT.  Hence, in general, $\mathcal{P}$ is a proper subset of the domain of definition $F_c$. However, 
if $\mathbb{P}$ contains an affine basis, then the restriction of $F_c$ to $\mathcal{P}$ determines $c$ uniquely.
 \begin{lemma}
 \label{conv_ser:uniq}
   In CT and DT the following holds.
  Assume that $\mathbb{P} \subseteq \mathbb{R}^{\QNUM}$ contains an affine basis of $\mathbb{R}^{\QNUM}$. Then for any
  two generating series $c_1,c_2$,
  \[ (\forall p \in \mathcal{P}: F_{c_1}[p]=F_{c_2}[p]) \implies c_1=c_2. \]
 \end{lemma}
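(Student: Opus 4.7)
\emph{Plan of proof.} Write $c = c_1 - c_2$; by the linearity of $c \mapsto F_c[p]$ in the generating series the lemma reduces to showing that $F_c[p] = 0$ for all $p \in \mathcal{P}$ implies $c \equiv 0$. I would prove by induction on $n$ that $c(s) = 0$ for every $s \in \mathcal{S}(\mathbb{I}_0^{n_\mathrm{p}})$ with $|s| = n$. The main tool is the elementary \emph{multi-affine vanishing lemma}: if $P(\mathbf{x}^{(1)}, \ldots, \mathbf{x}^{(N)})$ is a polynomial in vector variables $\mathbf{x}^{(k)} \in \mathbb{R}^{n_\mathrm{p}}$ that is affine in each $\mathbf{x}^{(k)}$ separately, then vanishing of $P$ on $\mathbb{P}^N$ forces vanishing on $(\mathbb{R}^{n_\mathrm{p}})^N$. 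This follows by fixing all but one variable and using that an affine map commutes with affine combinations, together with $\mathrm{Aff}\,\mathbb{P} = \mathbb{R}^{n_\mathrm{p}}$.

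\emph{DT case.} For each fixed $t \in \mathbb{N}$, the defining formula gives
\[
F_c[p](t) = \sum_{|s|=t} c(s)\, p_{s_1}(0) p_{s_2}(1) \cdots p_{s_t}(t-1),
\]
which is a polynomial in the independent vector variables $p(0), \ldots, p(t-1) \in \mathbb{R}^{n_\mathrm{p}}$; since $p_0 \equiv 1$ and $p_j$ ($j \ge 1$) enters linearly, it is affine in each $p(k)$. Applying the multi-affine vanishing lemma yields identical vanishing on $(\mathbb{R}^{n_\mathrm{p}})^t$, and the monomials $p_{s_1}(0) \cdots p_{s_t}(t-1)$ for distinct $s$ are linearly independent, so $c(s) = 0$ for every $|s| = t$. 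Iterating over $t$ completes DT.

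\emph{CT case.} The same multi-affine argument applies after an extraction step. Fix $n \ge 1$, pick $\mathbf{p}^{(1)}, \ldots, \mathbf{p}^{(n)} \in \mathbb{P}$ and $h_1, \ldots, h_n > 0$, and let $p \in \mathcal{P}$ be piecewise constant, equal to $\mathbf{p}^{(k)}$ on the $k$th interval of length $h_k$. Setting $T := \sum_k h_k$ and computing the nested integral directly gives
\[
(w_s \diamond p)(T,0) = \!\!\sum_{1 \le k_1 \le \cdots \le k_{|s|} \le n}\!\! \Bigl(\prod_{k=1}^{n} \tfrac{h_k^{m_k}}{m_k!}\Bigr) \mathbf{p}^{(k_1)}_{s_1} \cdots \mathbf{p}^{(k_{|s|})}_{s_{|s|}},
\]
with $m_k := \#\{j : k_j = k\}$. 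The growth bound on $c$ makes $F_c[p](T) = \sum_{s} c(s) (w_s \diamond p)(T,0)$ absolutely convergent and jointly real-analytic in $(h_1, \ldots, h_n)$ in a neighbourhood of the origin, so all Taylor coefficients at $0$ must vanish. Reading off the coefficient of $h_1 h_2 \cdots h_n$: this demands $m_k = 1$ for every $k$, which forces $|s| = n$ and $(k_1, \ldots, k_n) = (1, 2, \ldots, n)$. Hence
\[
\sum_{|s|=n} c(s)\, \mathbf{p}^{(1)}_{s_1} \mathbf{p}^{(2)}_{s_2} \cdots \mathbf{p}^{(n)}_{s_n} = 0 \qquad \forall\, \mathbf{p}^{(k)} \in \mathbb{P}.
\]
The left-hand side is affine in each $\mathbf{p}^{(k)}$, so the multi-affine vanishing lemma again yields $c(s) = 0$ for every $|s| = n$, and induction on $n$ finishes CT.

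\emph{Main obstacle.} The delicate step is the CT re-organisation of the infinite sum into a power series in $(h_1, \ldots, h_n)$ and the term-by-term extraction of the $h_1 h_2 \cdots h_n$ coefficient. This rests on absolute and uniform convergence on a neighbourhood of the origin, which follows from $\|c(s)\|_\mathrm{F} \le K R^{|s|}$ by the same majorant estimate used in Lemma~\ref{lemma:IIRcon}; once that is in hand, the combinatorial bookkeeping for which $(|s|, (k_1,\ldots,k_{|s|}))$ produce the monomial $h_1 h_2 \cdots h_n$ is routine.
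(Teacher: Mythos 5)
Your proof is correct and follows essentially the same route as the paper's: reduce the claim to the vanishing of the multi-affine forms $\sum_{|s|=k} c(s)\,\mathbf{p}^{(1)}_{s_1}\cdots\mathbf{p}^{(k)}_{s_k}$ on $\mathbb{P}^k$, extend to $(\mathbb{R}^{\QNUM})^k$ via the affine-basis/affine-combination argument, and in CT obtain these forms from piecewise-constant scheduling signals by isolating the term multilinear in the switching durations (your coefficient of $h_1\cdots h_n$ is exactly the mixed partial $\partial^k/\partial t_1\cdots\partial t_k$ at zero used in the paper, which cites \cite{SonWangObs} for that identity). The only difference is that you derive the iterated-integral expansion and the analyticity/coefficient-extraction step explicitly rather than by citation, which makes the CT case self-contained but does not change the argument.
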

 Note that for $\mathbb{P}=\mathbb{R}^{n_p}$ and CT, the statement of Lemma \ref{conv_ser:uniq}
 is a well-known, see \cite{SonWangObs,Son79b}.
  \begin{proof}
    For $i=1,2$ and integer $k > 0$ define the map $G_{i,k}$ on $\mathbb{R}^{\QNUM k}$ by
  \[ G_{i,k}(p_1,\ldots,p_k)=\sum_{q_1\cdots q_k \in \mathbb{I}_0^{n_{\mathrm{p}}}} c_i(q_1\cdots q_k)p_{1,q_1}\cdots p_{k,q_k}, \]
  where $p_{l,0}=1$ and $p_{l}=(p_{l,1},\ldots,p_{l,n_p})^T \in \mathbb{R}^{\QNUM}$,   $l=1,\ldots,k$.
  We will show that if $F_{c_1}[p]=F_{c_2}[p]$ for all $p \in \mathcal{P}$, then $G_{1,k}(p_1,\ldots,p_k)=G_{2,k}(p_1,\ldots,p_k)$ for all
  $p_1,\ldots,p_k \in \mathbb{P}$, and for all $k > 0$. 

  For DT, notice that $F_{c_i}[p](k)=G_{i,k}(p(0),\ldots,p(k-1))$ for all $p \in \mathcal{P}$, $k > 0$, hence in this case,
  clearly $\forall p \in \mathcal{P}: F_{c_1}[p]=F_{c_2}[p]$ implies $G_{1,k}(p_1,\ldots,p_k)=G_{2,k}(p_1,\ldots,p_k)$ for all
  $p_1,\ldots,p_k \in \mathbb{P}$, and for all $k > 0$.


  For CT, 
  consider a piecewise-constant $p \in \mathcal{P}$,
 i.e. assume that there exists $0 < t_1, \cdots, t_k \in \mathbb{R}$,
 such that $p(s)=p_i \in \mathbb{P}$, $s \in [\sum_{j=1}^{i-1} t_i, \sum_{j=1}^{i} t_i)$, $i=1,\ldots,k$. 
 From \cite[Lemma 2.1]{SonWangObs} and Lemma \ref{lemma:gen_series} it then follows that
 $F_{c_i}[t_1+\cdots+t_k]$, $i=1,2$  are analytic functions of $t_1,\ldots,t_{k}$, and
 \begin{equation}
\label{lem:extension:eq1}
 \begin{split}
    &  \frac{\partial^{k}}{\partial t_1,\ldots,\partial t_k}
     F_{c_i}[p](t_1+\cdots + t_k)|_{t_1=\cdots=t_k=0}= \\
    & =G_{i,k}(p_1,\ldots,p_k)
  \end{split}
  \end{equation}
  If $\forall p \in \mathcal{P}: F_{c_1}[p]=F_{c_2}[p]$, then  $\frac{\partial^{k}}{\partial t_1,\ldots,\partial t_k} F_{c_1}[p](t_1+\cdots + t_k)|_{t_1=\cdots=t_k=t_{k+1}=0} = \frac{\partial^{k}}{\partial t_1,\ldots,\partial t_{k}} F_{c_2}[p](t_1+\cdots + t_k)|_{t_1=\cdots=t_k=0}$,  for any piecewise-constant $p \in \mathcal{P}$, and hence by \eqref{lem:extension:eq1},
  $G_{1,k}(p_1,\ldots,p_k)=G_{2,k}(p_1,\ldots,p_k)$ for all $p_1,\ldots,p_k \in \mathbb{P}$, 

  To conclude the proof, we show that $G_{1,k}(p_1,\ldots,p_k)=G_{2,k}(p_1,\ldots,p_k)$ for all
  $p_1,\ldots,p_k \in \mathbb{P}$, and for all $k > 0$ implies that $c_1=c_2$. 
  Notice that $c_i(q_1\cdots q_k)=G_{i,k}(e_{q_1}\cdots e_{q_k})$ for all $q_1,\ldots,q_k \in \mathbb{I}_0^{\QNUM}$, where $e_0=0$ and
  $e_i$ is the $i$th standard basis vector of $\mathbb{R}^{\QNUM}$.
  Consider an affine basis 
  $\mathbb{B}=\{b_0,\ldots,b_{\QNUM}\} \subseteq \mathbb{P}$ of $\mathbb{R}^{n_p}$.
  Then $e_i=\sum_{j=0}^{\QNUM} \lambda_{i,j}b_j$ for some $\lambda_{i,j} \in \mathbb{R}$, $j \in \mathbb{I}_0^{\QNUM}$ such
  that $\sum_{j=0}^{\QNUM} \lambda_{i,j}=1$ for all $i \in \mathbb{I}_0^{\QNUM}$. Hence,
  $G_{i,k}(e_{q_1},\ldots,e_{q_k})=\sum_{l_1=0}^{\QNUM} \cdots \sum_{l_k=0}^{\QNUM} \lambda_{q_1,l_1} \cdots \lambda_{q_k,l_k} G_{i,k}(b_{l_1},\ldots,b_{l_k})$ for $i=1,2$ and all $q_1,\ldots,q_k \in \mathbb{I}_0^{\QNUM}$.  Since
  for all $q_1,\ldots,q_k \in \mathbb{I}_0^{\QNUM}$, $G_{1,k}(b_{q_1},\ldots,b_{q_k})=G_{2,k}(b_{q_1},\ldots,b_{q_k})$, as
  $b_{q_1},\ldots,b_{q_k} \in \mathbb{P}$, it then follows that
  $c_1(q_1\cdots q_k)=G_{1,k}(e_{q_1},\ldots, e_{q_k})=G_{2,k}(e_{q_1},\ldots,e_{q_k})=c_2(q_1\cdots q_k)$.
  Since $q_1,\ldots,q_k \in \mathbb{I}_0^{\QNUM}$ and $k > 0$ are arbitrary, the claim of the lemma follows.

  \end{proof}
 
 Let $\mathfrak{F}$ be an input-output function which admits a IIR, and recall from Definition \ref{def:grimarkov} the definition of the
 functions $\eta_{i,\mathfrak{F}}:\mathcal{S}(\mathbb{I}_{0}^{n_\mathrm{p}}) \ni v \mapsto \eta_{i,\mathfrak{F}}(v) \in \mathbb{R}^{n_{\mathrm{p}}}$,
 $\theta_{i,j,\mathfrak{F}}: \mathcal{S}(\mathbb{I}_{0}^{n_\mathrm{p}}) \ni v \mapsto \theta_{i,j,\mathfrak{F}}(v) \in \mathbb{R}^{n_\mathrm{y} \times n_\mathrm{u}}$, $i,j \in \mathbb{I}_0^{n_{\mathrm{p}}}$. 
 These functions can be viewed as generating series
 and hence the corresponding functions $F_{\theta_{i,j,\mathfrak{F}}}$
 $F_{\eta_{i,\mathfrak{F}}}$ are well defined, and their domain contains $\mathcal{P}$.
 \begin{proof}[Proof of Lemma \ref{lemma:IIRcon}]
  From the discussion above it follows that
   $\sum_{ s \in \mathcal{S}(\mathbb{I}_{0}^{n_\mathrm{p}})} \eta_{i,\mathfrak{F}}(s)\cdot  (w_s\diamond p)(t,0)=F_{\eta_{i,\mathfrak{F}}}[p](t)$ and 
   $\sum_{s \in \mathcal{S}(\mathbb{I}_{0}^{n_\mathrm{p}})} \theta_{i,j,\mathfrak{F}}(s) p_{j}(\delta) \cdot (w_{s}\diamond p)(t,\delta)=F_{\theta_{q,r,\mathfrak{F}}}[q ^\tau (p)](t-\tau)$, and that the growth condition 
 \eqref{IIR:growth} implies that these infinite sums are absolutely convergent.
 \end{proof}
 The proof of Lemma \ref{lemma:IIRcon} in fact can be generalized to yield the following.
 \begin{lemma}
 \label{lemma:gen_series}
  If $\mathfrak{F}$ admits a IIR, then  for all $p \in \mathcal{P}$, for all $t, \tau \in \mathbb{T}$, $\tau \le t$,  
   \[ \begin{split} 
    & (g_{\mathfrak{F}} \diamond p)(t) = \sum_{i \in \mathbb{I}_0^{n_\mathrm{p}} } p_i(t)F_{\eta_{i,\mathfrak{F}}}[p](t) \\
    & (h_{\mathfrak{F}}  \circ p)(\tau,t) =  \\
    &   \left\{\begin{array}{ll} 
         \sum_{q,r \in \mathbb{I}_0^{\QNUM}} p_{r}(\tau)p_{q}(t)F_{\theta_{q,r,\mathfrak{F}}}[q ^\tau (p)](t-\tau), & \mbox{ CT } \\
     \sum_{q,r \in \mathbb{I}_0^{\QNUM}} p_{r}(\tau)p_{q}(t)F_{\theta_{q,r,\mathfrak{F}}}[q^{\tau+1}(p)](t-\tau-1) & \mbox{ DT}
     \end{array}\right.
   \end{split}
  \]
  Recall that for any $\tau \in \mathbb{T}$, $(q^{\tau}p)(t)=p(\delta+\tau)$ for all $t \in \mathbb{T}$. 
 \end{lemma}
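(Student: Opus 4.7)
The plan is to reduce the claimed identities to the defining formulas \eqref{IIRsum:CT}--\eqref{IIRsum:DT} of the IIR by rearranging the infinite sums and factoring out the coefficients $p_i(t)$ and $p_j(\tau)$, which do not depend on the summation variable $s$. The two auxiliary facts needed are: (i) a shift identity for the sub-Markov dependencies, namely
\begin{equation*}
(w_s \diamond p)(t,\tau) = (w_s \diamond q^\tau p)(t-\tau,0), \qquad s \in \mathcal{S}(\mathbb{I}_0^{n_\mathrm{p}}),
\end{equation*}
and (ii) absolute convergence of the resulting series, which is inherited from Lemma \ref{lemma:IIRcon}.

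First I would establish the shift identity. In DT it is a direct verification: if $|s|=n=t-\tau+1$, then by Definition \ref{def:volterra}, $(w_s \diamond p)(t,\tau)=p_{s_1}(\tau)\cdots p_{s_n}(t)$, while $(w_s \diamond q^\tau p)(t-\tau,0)=(q^\tau p)_{s_1}(0)\cdots (q^\tau p)_{s_n}(t-\tau)=p_{s_1}(\tau)\cdots p_{s_n}(t)$; if $|s|\ne t-\tau+1$ both sides vanish. In CT I would proceed by induction on $|s|$: the base case $s=\epsilon$ is trivial since both sides equal $1$, and the inductive step follows by the change of variable $\delta' = \delta - \tau$ in the outer integral of the recursive formula, which turns $\int_\tau^t p_{s_n}(\delta)(w_{s_1\cdots s_{n-1}}\diamond p)(\delta,\tau)d\delta$ into $\int_0^{t-\tau} (q^\tau p)_{s_n}(\delta')(w_{s_1\cdots s_{n-1}}\diamond q^\tau p)(\delta',0)d\delta'$, which by the induction hypothesis equals $(w_s\diamond q^\tau p)(t-\tau,0)$.

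Next I would substitute this identity into the defining formulas of $g_\mathfrak{F}\diamond p$ and $h_\mathfrak{F}\diamond p$. For $g_\mathfrak{F}\diamond p$, the sub-Markov dependency $(w_s\diamond p)(t,0)$ is already of the required form (since $\tau=0$), so one merely swaps the summations over $i\in\mathbb{I}_0^{n_\mathrm{p}}$ and $s\in \mathcal{S}(\mathbb{I}_0^{n_\mathrm{p}})$ and pulls the $s$-independent factor $p_i(t)$ outside the inner sum. The inner sum is then, by the definition of a generating series given before Lemma \ref{conv_ser:uniq}, exactly $F_{\eta_{i,\mathfrak{F}}}[p](t)$ in CT and the analogous DT expression. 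For $h_\mathfrak{F}\diamond p$, I would similarly pull $p_i(t)p_j(\tau)$ outside the sum over $s$; applying the shift identity with $\tau$ (CT) or $\tau+1$ (DT) converts $\sum_s \theta_{i,j,\mathfrak{F}}(s)(w_s\diamond p)(t,\tau)$ into $F_{\theta_{i,j,\mathfrak{F}}}[q^\tau p](t-\tau)$, and $\sum_s \theta_{i,j,\mathfrak{F}}(s)(w_s\diamond p)(t-1,\tau+1)$ into $F_{\theta_{i,j,\mathfrak{F}}}[q^{\tau+1}p](t-\tau-1)$, respectively. Renaming $(i,j)\mapsto(q,r)$ yields the claimed formulas.

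The main (very mild) obstacle is justifying the rearrangement of the doubly infinite sums in the CT case. The growth condition \eqref{IIR:growth} applies uniformly to every block $\theta_{i,j,\mathfrak{F}}$ and $\eta_{i,\mathfrak{F}}$ of $\theta_\mathfrak{F}$, so Lemma \ref{lemma:IIRcon} (applied coordinate-wise to these blocks) guarantees absolute convergence of the inner series for each fixed $(i,j)$; the outer sum over $(i,j)\in\mathbb{I}_0^{n_\mathrm{p}}\times\mathbb{I}_0^{n_\mathrm{p}}$ is finite, so Fubini for series is applicable and all interchanges used above are legitimate. In DT the sums are finite once $t$ is fixed, so no convergence issue arises.
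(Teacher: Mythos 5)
Your proposal is correct and follows essentially the same route as the paper, which states Lemma \ref{lemma:gen_series} as a generalization of the (one-line) proof of Lemma \ref{lemma:IIRcon} without spelling out the details. The shift identity $(w_s \diamond p)(t,\tau) = (w_s \diamond q^\tau p)(t-\tau,0)$ that you prove by induction, together with the Fubini-type rearrangement justified by the growth condition \eqref{IIR:growth}, is exactly the content the paper leaves implicit, and your index bookkeeping for the DT convention $F_c[p](t)=\sum_v c(v)(w_v\diamond p)(t-1,0)$ checks out.
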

 \begin{proof}[Proof of Lemma \ref{lem:extension}]
  It is easy to see that if 
$\theta_{\mathfrak{F}}=\theta_{\hat{\mathfrak{F}}}$, then
  $\hat{\mathfrak{F}}=\mathfrak{F}$.
  Therefore, we concentrate on proving that $\hat{\mathfrak{F}}=\mathfrak{F}$ implies 
  $\theta_{\mathfrak{F}}=\theta_{\hat{\mathfrak{F}}}$.

  If $\mathfrak{F}=\hat{\mathfrak{F}}$, then $g_{\mathfrak{F}} \diamond p=\mathfrak{F}(0,p)=\hat{\mathfrak{F}}(0,p)=g_{\hat{\mathfrak{F}}} \diamond p$ for all $p \in \mathcal{P}$
   Using this and \eqref{equ:convol} it then follows that $\mathfrak{F}=\hat{\mathfrak{F}}$ implies that 
  for all $u \in \mathcal{U}$,  $p \in \mathcal{P}$, and  $t \in \mathbb{T}$, 
  $\int_0^t (h_{\mathfrak{F}} \diamond p)(\delta,t)u(\delta)d\delta = \int_0^t (h_{\mathfrak{\hat{F}}} \diamond p)(\delta,t)u(\delta)d\delta$   for CT, and
  $\sum_{\delta=0}^{t-1}(h_{\mathfrak{F}} \diamond p)(\delta,t)u(\delta) = \sum_{\delta=0}^{t-1}(h_{\hat{\mathfrak{F}}} \diamond p)(\delta,t)u(\delta)$ for  DT. 

   For DT, one can choose $u$ such that $u(\delta)=e_j$ for some $\delta \in [0,t-1]$ and $u(s)=0$ for all $s \ne \delta \in [0,t-1]$, 
  $j=1,\ldots,n_\mathrm{u}$. Using this remark for all $\delta=0,1,\ldots, t-1$ successively,
  it follows that 
  $\sum_{s=0}^{t-1} (h_{\mathfrak{F}} \diamond p)(s,t)u(s) =  \sum_{s=0}^{t-1} (h_{\mathfrak{F}} \diamond p)(s,t)u(s)$
  implies that $(h_{\mathfrak{F}} \diamond p)(\delta,t)=(h_{\hat{\mathfrak{F}}} \diamond p)(\delta,t)$ for all $\delta \in [0,t]$.

  For CT,  from \cite[Theorem 9.3,Chapter 11]{LangRealAnalysisBook}
  it follows that $\int_0^t (h_{\mathfrak{F}} \diamond p)(\delta,t)u(\delta)d\delta = \int_0^t (h_{\mathfrak{\hat{F}}} \diamond p)(\delta,t)u(\delta)d\delta$ for all $u \in \mathcal{U}$ implies that
$(h_{\mathfrak{F}} \diamond p)(\delta,t)=(h_{\mathfrak{F}} \diamond p)(\delta,t)$ for almost all  
  $\delta \in [0,t]$ and all $t \in \mathbb{R}_{+}$.  Note that 
  by \cite[Lemma 2.2]{WangGenSer} $F_{\theta_{i,j,\mathfrak{F}}}$, $F_{\theta_{i,j,\hat{\mathfrak{F}}}}$ are continuous functions.
  Hence, if $p$ is continous at $0$ from the right, then
  by Lemma \ref{lemma:gen_series},  $(h_{\mathfrak{F}} \diamond p)(\delta,t)$, $(h_{\mathfrak{F}} \diamond p)(\delta,t)$
  are continous at $\delta=0$ from the right, and therefore $(h_{\mathfrak{F}} \diamond p)(\delta,t)=(h_{\mathfrak{F}} \diamond p)(\delta,t)$ for almost all  $\delta \in [0,t]$ implies $(h_{\mathfrak{F}} \diamond p)(0,t)=(h_{\mathfrak{F}} \diamond p)(0,t)$. 

  That is, if $\mathfrak{F}=\hat{\mathfrak{F}}$, then,
  \begin{equation}
  \label{lem:extension:pf:eq2} 
    \begin{split}
      & \forall p \in \mathcal{P}:
       g_{\mathfrak{F}} \diamond p=g_{\hat{\mathfrak{F}}} \diamond p \\
   & \forall p \in \mathcal{P}_c, \forall t \in \mathbb{T}:  
  (h_{\mathfrak{F}} \diamond p)(0,t)=(h_{\hat{\mathfrak{F}}} \diamond p)(0,t),
    \end{split}
  \end{equation}
  where in DT $\mathcal{P}_c=\mathcal{P}$ and in CT $\mathcal{P}_c$ is the set of all $p \in \mathcal{P}$ which are contiuous at $0$ from the right. 
  For a fixed $p \in \mathcal{P}$, $t \in \mathbb{T}$ define the maps
  $G_{p,t}:\mathbb{R}^{\QNUM} \rightarrow \mathbb{R}^{\NY}$
  $H_{p,t}:\mathbb{R}^{\QNUM}  \times \mathbb{R}^{\QNUM} \rightarrow \mathbb{R}^{\NY \times \NU}$ by
  \[ 
     \begin{split} 
     & G_{p,t}(x)=\sum_{i =0}^{\QNUM} x_i (F_{\eta_{i,\mathfrak{F}}}[p](t) - F_{\eta_{i,\hat{\mathfrak{F}}}}[p](t)) \\
     & H_{p,t}(x,\bar{x})=\sum_{q,r=0}^{\QNUM} x_r\hat{x}_q(F_{\theta_{q,r,\mathfrak{F}}}[p](t)-F_{\theta_{q,r,\hat{\mathfrak{F}}}}[p](t))
     \end{split}
  \]
 for $x=(x_1,\ldots,x_{\QNUM})^T$, $\bar{x}=(\bar{x}_1,\ldots,\bar{x}_{\QNUM})^T$, and $x_0=\bar{x}_0=1$.
  We will show that \eqref{lem:extension:pf:eq2} implies that 
  for any $p \in \mathcal{P}$, $t \in \mathbb{T}$,
  \begin{equation}
  \label{lem:extension:pf:eq1}
   \forall b,\hat{b} \in  \mathbb{P}: G_{p,t}(b)=0,\quad H_{p,t}(b,\hat{b})=0 
  \end{equation}
   Assume that \eqref{lem:extension:pf:eq1} holds for all $b,\hat{b} \in \mathbb{P}$ and for any
  $p \in \mathcal{P}$. Let $v_0,\ldots,v_{\QNUM}$ be elements of $\mathbb{P}$ which form an affine basis of
  $\mathbb{R}^{\QNUM}$. 
  Then for any $x \in \mathbb{R}^{\QNUM}$, $\bar{x} \in \mathbb{R}^{\QNUM}$
  there exist $\lambda_{j},\mu_j \in \mathbb{R}$, $j\in \mathbb{I}_0^{\QNUM}$, such that
  $\sum_{j=0}^{\QNUM} \lambda_{j}=1$, $\sum_{j=0}^{\QNUM} \mu_j=1$ and 
  $x=\sum_{j=0}^{\QNUM} \lambda_{j}v_j$, $\bar{x}=\sum_{j=0}^{\QNUM} \mu_j v_j$. 
  Since $v_0,\ldots,v_{\QNUM}$ belong to $\mathbb{P}$, then by \eqref{lem:extension:pf:eq1},  $G_{p,t}(v_{j_1})=0,H_{p,t}(v_{j_1},v_{j_2})=0$, for all $j_1,j_2 \in \mathbb{I}_0^{\QNUM}$.
  Hence, by a direct calculation it follows that 
  $G_{p,t}(x)=G_{p,t}(\sum_{j=0}^{\QNUM} \lambda_{j} v_j)=\sum_{j=0}^{\QNUM} \lambda_{j} G_{p,t}(v_j)=0$ and
  $H_{p,t}(x,\bar{x})=H_{p,t}(\sum_{j=0}^{\QNUM} \lambda_{j} v_j,\sum_{j=0}^{\QNUM} \mu_{j} v_j)=\sum_{j_1,j_2=0}^{\QNUM} \lambda_{j_1} \mu_{j_2} H_{p,t}(v_{j_1},v_{j_2})=0$. Since $x, \bar{x}$ are arbitrary, it then follows that $H_{p,t}=0$, $G_{p,t}=0$, and the latter
  implies that $F_{\eta_{i,\mathfrak{F}}}[p](t) = F_{\eta_{i,\hat{\mathfrak{F}}}}[p](t))$, 
  $F_{\theta_{i,k,\mathfrak{F}}}[p](t)=F_{\theta_{i,k,\hat{\mathfrak{F}}}}[p](t))$ for all $i,j \in \mathbb{I}_0^{\QNUM}$. 
  Indeed, $G_{p,t}(0)=F_{\eta_{0,\mathfrak{F}}}[p](t) - F_{\eta_{0,\hat{\mathfrak{F}}}}[p](t))=0$, 
  $H_{p,t}(0)=(F_{\theta_{0,0,\mathfrak{F}}}[p](t)-F_{\theta_{0,0,\hat{\mathfrak{F}}}}[p](t))=0$, and 
  $\dfrac{dG_{p,t}(x)}{dx_i}=F_{\eta_{i,\mathfrak{F}}}[p](t) - F_{\eta_{i,\hat{\mathfrak{F}}}}[p](t))=0$, 
  $\dfrac{dH_{p,t}(x)}{dx_j}|_{x=0}=(F_{\theta_{0,i,\mathfrak{F}}}[p](t)-F_{\theta_{0,i,\hat{\mathfrak{F}}}}[p](t))=0$,
  $\dfrac{dH_{p,t}(x)}{dx_i dx_k}=(F_{\theta_{i,k,\mathfrak{F}}}[p](t)-F_{\theta_{i,k,\hat{\mathfrak{F}}}}[p](t))=0$,  for all $i,k=1,\ldots,\QNUM$.
  Since $p \in \mathcal{P}$ and $t \in \mathbb{T}$ are arbitrary, by Lemma \ref{conv_ser:uniq} this implies that
  $\eta_{i,\mathfrak{F}}= \eta_{i,\hat{\mathfrak{F}}}$,
  $\theta_{i,k,\mathfrak{F}}=\theta_{i,k,\hat{\mathfrak{F}}}$ for all $i,k \in \mathbb{I}_{0}^{\QNUM}$, i.e.
  $\theta_{\mathfrak{F}}=\theta_{\hat{\mathfrak{F}}}$

We finish the proof by proving that \eqref{lem:extension:pf:eq2} implies  \eqref{lem:extension:pf:eq1}.
%
In the DT case, consider  any $p \in \mathcal{P}$ and $t \in \mathbb{T}$.
Fix any $b \in \mathbb{P}$  and define $\hat{p} \in \mathcal{P}$ by $\hat{p}(t)=b$ and $p(s)=\hat{p}(s)$ for $s=0,\ldots,t-1$.
Notice that by the definition $F_{c}[p](t)=F_{c}[\hat{p}](t)$ for any convergent series $c$. Hence, 
from Lemma \ref{lemma:gen_series} it then follows that
$(g_{\mathfrak{F}} \diamond \hat{p})(t)=(g_{\hat{\mathfrak{F}}} \diamond \hat{p})(t)$ implies 
$G_{p,t}(b)=(g_{\mathfrak{F}} \diamond \hat{p})(t)-(g_{\hat{\mathfrak{F}}} \diamond \hat{p})(t)=0$ for all $b \in \mathbb{P}$.
In order to show that $\forall b,\hat{b} \in \mathbb{P}: H_{p,t}(b,\hat{b})=0$, for any $b,\hat{b} \in \mathbb{P}$ define $\hat{p}  \in \mathcal{P}$ as
$\hat{p}(0)=\hat{b}$, $\hat{p}(t+1)=b$ and $\hat{p}(s)=p(s)$ for all $s=1,\ldots, t$.
Notice that for any convergent series $c$, $F_{c}[p](t)=F_{c}[q_1(\hat{p})](t)$. 
Hence, from Lemma \ref{lemma:gen_series} and
$(h_{\mathfrak{F}} \diamond \hat{p})(0,t+1)=(h_{\hat{\mathfrak{F}}} \diamond \hat{p})(0,t+1)$ and
$H_{p,t}(b,\hat{b})=(h_{\mathfrak{F}} \diamond \hat{p})(0,t+1)-(h_{\hat{\mathfrak{F}}} \diamond \hat{p})(0,t+1)$ it follows that $\forall b,\hat{b} \in \mathbb{P}: H_{p,t}(b,\hat{b})=0$.

For the CT case, 
for any $p \in \mathcal{P}$ and any $b,\hat{b} \in \mathbb{P}$, 
define $\hat{p}_n \in \mathcal{P}$ such that for all $n  \in \mathbb{N}$, $\hat{p}_s(s)=\hat{b}$, if $s \in [0,\frac{1}{n})$, $\hat{p}_n(s)=p(s)$, if
$s \in [\frac{1}{n},t-\frac{1}{n})$ and $\hat{p}_n(s)=b$ if $s \in [t-\frac{1}{n},+\infty)$.
From Lemma \ref{lemma:gen_series} it follows that $H_{\hat{p}_n,t}(b)= (h_{\mathfrak{F}} \diamond \hat{p}_n)(0,t)-(h_{\hat{\mathfrak{F}}} \diamond \hat{p}_n)(0,t)$  and $G_{\hat{p}_n,t}(b)=(g_{\mathfrak{F}} \diamond \hat{p}_n)(t)-(g_{\hat{\mathfrak{F}}} \diamond \hat{p}_n)(t)$.
Notice  that $\hat{p}_n$ is continuous at zero from the right. 
Hence, $(g_{\mathfrak{F}} \diamond \hat{p}_n)(t)=(g_{\hat{\mathfrak{F}}} \diamond \hat{p}_n)(t)$ and  $(h_{\mathfrak{F}} \diamond \hat{p_n})(0,t)=(h_{\hat{\mathfrak{F}}} \diamond \hat{p}_n)(0,t)$. Hence, $H_{\hat{p}_n,t}(b,\hat{b})=0$ and $G_{\hat{p}_n,t}(b)=0$.
It is also easy to see that $\lim_{n \rightarrow \infty} \int_0^{t} \| \hat{p}_n(s)-p(s)\| ds=0$,
i.e. the restriction $\hat{p}_n|_{[0,t]}$ converges to $p|_{[0,t]}$ in the $L^1([0,t],\mathbb{R}^{\QNUM})$. 
From \cite[Lemma 2.2]{WangGenSer} it follows that $\lim_{n \rightarrow \infty} F_c[\hat{p}_n](t)=F_c[p](t)$ for any convergent series $c$. 
Therefore, $H_{p,t}(b,\hat{b})=\lim_{n \rightarrow \infty} H_{\hat{p}_n,t}(b,\hat{b})$ and 
$G_{p,t}=\lim_{n \rightarrow \infty} G_{\hat{p}_n,t}(b)=0$. 
From this and $H_{\hat{p}_n,t}(b,\hat{b})=0$ and $G_{\hat{p}_n,t}(b)=0$, \eqref{lem:extension:pf:eq1} follows.

  Finally, for every $p \in \mathcal{P}_e$, $t \in \mathbb{T}$, define $(g_{\mathfrak{F}_{e}} \diamond p)(t)=\sum_{i \in \mathbb{I}_0^{\QNUM}} p_i(t)F_{\eta_{i,\mathfrak{F}}}[p](t)$ and
  $(h_{\mathfrak{F}_{e}} \diamond p)(\tau,t)=\sum_{q,r \in \mathbb{I}_0^{n_\mathrm{p}}} p_r(\tau)p_q(t) F_{\theta_{q,r,\mathfrak{F}}}[q^{\tau}(p)](t-\tau)$ for the CT case, and
  $(h_{\mathfrak{F}_{e}} \diamond p)(\tau,t)=\sum_{q,r \in \mathbb{I}_0^{n_\mathrm{p}}} p_r(\tau)p_q(t) F_{\theta_{q,r,\mathfrak{F}}}[q^{\tau+1}(p)](t-\tau)$ for the DT case.
  Note that $F_{\eta_{q,\mathfrak{F}}}[p](t)$ and $F_{\theta_{q,r,\mathfrak{F}}}[q^{\delta} p](t-\tau)$, $\delta=\tau,\tau+1$ are defined for all $p \in \mathcal{P}_{e}$,
  both in DT and CT. 
  Thus, if  for any $(u,p) \in \mathcal{U} \times \mathcal{P}_e$ and $t \in \mathbb{T}$, we define
  \[ \mathfrak{F}_e(u,p)(t) = (g_{\mathfrak{F}_{e}} \diamond p)(t) + \left\{\begin{array}{rl}
                  \int_0^{t} (h_{\mathfrak{F}_{e}} \diamond p)(\delta,t)u(\delta) d\delta & \mbox{ CT} \\
                   \sum_{\delta=0}^{t-1} (h_{\mathfrak{F}_{e}} \diamond p)(\delta,t)u(\delta) & \mbox{ DT} 
                  \end{array}\right. ,
\]
then $\mathfrak{F}_e$ satisfies the conditions of the lemma.
 \end{proof}
 \begin{proof}[Proof of Lemma \ref{lem:realiofunction}]
  We start by analyzing the input-output function $\mathfrak{Y}_{\Sigma,x_{\mathrm{o}}}$ of $\Sigma$. 
 To this end, for any $(u,p) \in \mathcal{U} \times \mathcal{P}$ and for any
  $t \in \mathbb{T}, 0 \le \tau \le t$, define
  \[ 
     \begin{split}
     & (h_{\Sigma} \diamond p)(t,\tau)= 
        \left\{ \begin{array}{rl}
                  C(p(t))\Phi(t,\tau)B(p(\tau)) & \mbox{ in CT} \\
                  C(p(t))\Phi(t-1,\tau+1)B(p(\tau)) & \mbox{ in DT} 
        \end{array}\right.,  \\
       & (g_{\Sigma} \diamond p)(t)=C(p(t))\Phi(t,0)x_{\mathrm{o}}
    \end{split}
  \]
 where $\Phi(t,\tau)$ is the fundamental matrix of $A(p(t))$, i.e. $\xi \Phi(t,\tau)=A(p(t))\Phi(t,\tau)$, $\Phi(\tau,\tau)=I_{n_x}$.
 For DT, we set $\Phi(t,\tau)=0$ for $\tau > t$. 
It is then easy to see that
 \begin{equation}
 \label{lem:realiofunction:eq1} 
  \begin{split} 
   & \mathfrak{Y}_{\Sigma,x_{\mathrm o}}(u,p)(t)= (g_{\Sigma} \diamond p)(t)+ \\
   & + \left\{\begin{array}{rl} 
      \sum_{\delta=0}^{t-1} (h_{\Sigma} \diamond p)(t,\delta)u(\delta), &  \mbox{ in DT } \\
      \int_{0}^{t} (h_{\Sigma} \diamond p)(t,\delta)u(\delta)d\delta &  \mbox{ in CT } 
   \end{array}\right.
  \end{split}
 \end{equation}
 Consider the bilinear system
 \begin{equation}
 \label{lem:realiofunction:eq2} 
 \begin{split}
      & \xi \eta(\delta)= A_0\eta(\delta)+ \sum_{i=1}^{\QNUM} (A_i\eta(\delta))w_i(\delta) \\
      & y(\delta)=C(p(t))\eta(\delta).
 \end{split}
 \end{equation}
  Set the initial state $\eta(0)$ of \eqref{lem:realiofunction:eq2} to be the 
  $i$th column of $B(p(\tau))$.
  Notice that the $i$th column of $(h_{\Sigma}  \diamond p)(t,\tau)$ is the output of
  \eqref{lem:realiofunction:eq2} a time $t-\tau$ for $w=\sigma_{\tau}p$ in CT, and it is the output
  of \eqref{lem:realiofunction:eq2} at time $t-\tau-1$  for $w=\sigma_{\tau+1}(p)$ in DT.  
  Similarly, if we set  $\eta(0)=x_{\mathrm o}$, then
  $(g_{\Sigma} \diamond p)(t)$ is the output of \eqref{lem:realiofunction:eq2} for $w=p$. 
  From \cite{Son79b,Isi95,isi:tac} it then follows that
  \[  
     \begin{split} 
       & (h_{\Sigma} \diamond p)(\tau,t)= \\
       & \left\{ \begin{array}{ll}
           \sum_{s \in \Words} c(s)(w_{s} \diamond p)(\tau,t), & \mbox{CT} \\
           \sum_{s \in \Words} c(s)(w_{s} \diamond p)(\tau+1,t-1), & \mbox{DT} \\
         \end{array}\right. \\
     & (g_{\Sigma} \diamond p)(t)=\left\{\begin{array}{ll}
               \sum_{s \in \Words} c_0(s)(w_s \diamond p)(0,t), & \mbox{CT} \\ 
               \sum_{s \in \Words} c_0(s)(w_s \diamond p)(0,t-1), & \mbox{ DT} \\ 
       \end{array}\right. 
    \end{split}
  \]
  where $c:\Words \rightarrow \mathbb{R}^{n_y \times n_u}$,
  $c_0:\Words \rightarrow \mathbb{R}^{n_y}$ and
  \begin{align*}
    & c(s)=\sum_{r,q \in \AQ} p_r(t)p_q(\tau) C_{r}A_{s}B_{q} \\
    &  c_0(s)=\sum_{q\in \AQ} p_q(t)C_{q}A_s x_{\mathrm o} 
 \end{align*}
 for all $s \in \Words$, 
  Let us define $\theta_{\mathfrak{Y}_{\Sigma,x_{\mathrm o}}}: \Words \rightarrow \mathbb{R}^{(\QNUM+1) n_\mathrm{y} \times (n_\mathrm{u}(\QNUM+1)+1)}$ 
  as in \eqref{lem:realiofunction:eq0}, i.e. for all $i,j \in \AQ$, $s \in \Words$,
  $\theta_{i,j,\mathfrak{Y}_{\Sigma,x_{\mathrm{o}}}}(s)=C_iA_{s}B_j$, $\eta_{j,\mathfrak{Y}_{\Sigma,x_{\mathrm o}}}=C_jA_s x_{\mathrm{o}}$  and
  $\theta_{\mathfrak{Y}_{\Sigma,x_{\mathrm o}}}(s)$ equals
  \[
   \begin{bmatrix}
            \eta_{0,\mathfrak{Y}_{\Sigma,x_{\mathrm o}}}(s) & \theta_{0,0,\mathfrak{Y}_{\Sigma,x_{\mathrm o}}}(s) & \cdots & \theta_{0,\QNUM,\mathfrak{Y}_{\Sigma,x_{\mathrm o}}}(s) \\ 
            \eta_{1,\mathfrak{Y}_{\Sigma,x_{\mathrm o}}}(s) & \theta_{1,0,\mathfrak{Y}_{\Sigma,x_{\mathrm o}}}(s) & \cdots & \theta_{1,\QNUM,\mathfrak{Y}_{\Sigma,x_{\mathrm o}}}(s) \\ 
            \vdots       & \vdots       & \cdots & \vdots  \\ 
            \eta_{\QNUM,\mathfrak{Y}_{\Sigma,x_{\mathrm o}}}(s) & \theta_{\QNUM,0,\mathfrak{Y}_{\Sigma,x_{\mathrm o}}}(s) & \cdots & \theta_{\QNUM,\QNUM,\mathfrak{Y}_{\Sigma,x_{\mathrm o}}}(s) \\ 
           \end{bmatrix}.
  \]
  Then for all $p \in \mathcal{P}$, 
  define the functions $(h_{\mathfrak{Y}_{\Sigma,x_{\mathrm o}}} \diamond p), (g_{\mathfrak{Y}_{\Sigma,x_{\mathrm o}}} \diamond p)$ as follows: 
  for all $t \in \mathbb{T}$, $0 \le \tau \le t$, define
  \begin{align*}
    & (h_{\mathfrak{Y}_{\Sigma,x_{\mathrm o}}} \diamond p)(\tau,t)=(h_{\Sigma} \diamond p)(\tau,t) \\
   & (g_{\mathfrak{Y}_{\Sigma,x_{\mathrm o}}} \diamond p)(t)=  (g_{\Sigma} \diamond p)(t)
  \end{align*}
  It then follows that $(h_{\mathfrak{Y}_{\Sigma,x_{\mathrm o}}} \diamond p)(\tau,t)$, $(g_{\mathfrak{Y}_{\Sigma,x_{\mathrm o}}} \diamond p)(t)$ and $\mathfrak{F}=\mathfrak{Y}_{\Sigma,x_{\mathrm o}}$
  satisfy \eqref{equ:convol}.
  Notice that, if define $\alpha=\max\{ ||C_q||_F \mid q \in \AQ\} \cup \{||x_{\mathrm o}|| ||B_q||_F \mid q \in \AQ\}$ and $K=\alpha^{2}\sqrt{\QNUM(\QNUM+1)}$, 
  $R=\max_{q \in \AQ} ||A_q||_F$, then 
  $||\theta_{\mathfrak{Y}_{\Sigma,x_{\mathrm o}}}(s)||_F \le KR^{|s|}$
  for all $s \in \Words$.
  Hence, $\mathfrak{Y}_{\Sigma,x_{\mathrm o}}$ has a IIR and 
  $\theta_{\mathfrak{Y}_{\Sigma,x_{\mathrm o}}}$ is the function of  
  sub-Markov parameters.
  
  Assume that $\Sigma$ is a realization of $\mathfrak{F}$. Then $\mathfrak{Y}_{\Sigma,x_{\mathrm o}}=\mathfrak{F}$ for some initial state $x_{\mathrm o}$ of $\Sigma$.
 From Lemma 
  \ref{lem:extension}, $\theta_{\mathfrak{Y}_{\Sigma,x_{\mathrm o}}}=\theta_{\mathfrak{F}}$ and hence $\theta_{\mathfrak{F}}$ satisfies \eqref{lem:realiofunction:eq0}.
  Conversely, assume that $\theta_{\mathfrak{F}}$ 
  satisfies \eqref{lem:realiofunction:eq0}. Then
  $\theta_{\mathfrak{F}}=\theta_{\mathfrak{Y}_{\Sigma,x_{\mathrm o}}}$ and thus by 
  Lemma \ref{lem:extension} $\mathfrak{F}_{\Sigma,x_{\mathrm o}}=\mathfrak{F}$, i.e.
  $\Sigma$ is a realization of $\mathfrak{F}$.
 \end{proof}

\subsection{Relationship between LPV-SSAs and linear switched state-space representations}
\label{para:proof:switch}
\label{para:LSLPV}

 In this section we  state the relationship between the LPV-SSAs and \emph{linear switched state-space representations} (\emph{abbreviated by LSS-SS}). 
This relationship will allow us to prove the results on realization theory of LPV-SSAs. To this end,  we introduce the following notation.
\begin{notation}
 Denote $\mathbb{P}_{sw} = \{ e_0, e_1, \cdots, e_{D}\}$, where $e_0$ is the zero vector\footnote{i.e. all entries of $e_0$ are zero} in $\mathbb{R}^{n_{\mathrm p}}$,  and
 let $\mathcal{P}_{sw}$ either $\mathcal{C}_{p}(\mathbb{R}_{+},\mathbb{P}_{sw})$ (cont. time) or
 $\mathbb{P}_{sw}^{\mathbb{N}}$ (discrete. time).
\end{notation}
 An LSS-SS is just an  LPV-SSA 
for which the space of scheduling variables equals $\mathbb{P}_{sw}$. Then, potential input-output functions of LSS-SSs are functions of the form 
 \begin{equation*}
   \mathfrak{F} : \mathcal{U} \times \mathcal{P}_{sw}  \mapsto \mathcal{Y}
 \end{equation*}
such that $\mathfrak{F}$ admits an IIR. LSS-SSs and their input-output functions in the sense of \cite{Pet12,PetCocv11} correspond to LSS-SSs and their input-output functions in the above sense, if each scheduling variable $e_q$ is identified 
with the discrete mode $q \in \AQ$ 
(here $e_0=0$). 
We refer the reader to \cite{PetCocv11,Pet12} for the notion of realization, minimality, observability, span-reachability and isomorphism for LSS-SSs. 
Alternatively, all these notions are special cases of the corresponding concepts for LPV-SSAs,
if LSS-SSs are identified as a subclass of LPV-SSAs. 
The discussion above prompts us to define the following concept.
\begin{definition}\label{def:switchediofunction}
For each function $\mathfrak{F} : \mathcal{U} \times \mathcal{P} \mapsto \mathcal{Y}$ admitting an IIR, the associated switched input-output function
 $\SWS(\mathfrak{F}) : \mathcal{U} \times \mathcal{P}_{sw} \mapsto \mathcal{Y}$ 
 is defined as follows: if $\mathfrak{F}_{\mathrm e}$ is the extension of $\mathfrak{F}$ to 
$\mathcal{U} \times \mathcal{P}_{\mathrm e}$ as described in Lemma~\ref{lem:extension}, then
$\SWS(\mathfrak{F})$  is the restriction of $\mathfrak{F}_{\mathrm e}$ to $\mathcal{U} \times \mathcal{P}_{sw} \subseteq \mathcal{U} \times \mathcal{P}_e$.
  \end{definition}
Lemma \ref{lem:extension} yields the following corollary.
\begin{corollary}
\label{switchediofunction:col1}
 $\theta_{\mathfrak{F}} = \theta_{\mathfrak{F}_{\mathrm e}} = \theta_{\SWS(\mathfrak{F})}$
\end{corollary}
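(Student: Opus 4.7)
The plan is to reduce the whole corollary to Lemma \ref{lem:extension}. The first equality, $\theta_{\mathfrak{F}}=\theta_{\mathfrak{F}_{\mathrm e}}$, is already contained in the statement of Lemma \ref{lem:extension} (the extension $\mathfrak{F}_{\mathrm e}$ is constructed so that it admits an IIR with the same sub-Markov parameter function as $\mathfrak{F}$). Hence only $\theta_{\mathfrak{F}_{\mathrm e}}=\theta_{\SWS(\mathfrak{F})}$ needs to be argued.

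For that second equality, the first step is to verify that $\mathbb{P}_{sw}=\{e_0,e_1,\ldots,e_{n_\mathrm{p}}\}$, with $e_0=0$ and $e_1,\ldots,e_{n_\mathrm{p}}$ the standard basis vectors of $\mathbb{R}^{n_\mathrm{p}}$, is itself an affine basis of $\mathbb{R}^{n_\mathrm{p}}$: these $n_\mathrm{p}+1$ points are affinely independent and their affine hull is all of $\mathbb{R}^{n_\mathrm{p}}$. Consequently, Lemma \ref{lem:extension} is applicable to input-output functions on $\mathcal{U}\times\mathcal{P}_{sw}$, and in particular its uniqueness-of-sub-Markov-parameters conclusion holds.

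Next, I would exhibit $\theta_{\mathfrak{F}_{\mathrm e}}$ as a sub-Markov parameter function for $\SWS(\mathfrak{F})$. Since $\mathcal{P}_{sw}\subseteq \mathcal{P}_{\mathrm e}$, the convolution representation \eqref{equ:convol} together with the expressions \eqref{IIRsum:CT}--\eqref{IIRsum:DT} satisfied by $\mathfrak{F}_{\mathrm e}$ for all $p\in\mathcal{P}_{\mathrm e}$ restrict without modification to $p\in\mathcal{P}_{sw}$ and represent $\SWS(\mathfrak{F})=\mathfrak{F}_{\mathrm e}|_{\mathcal{U}\times\mathcal{P}_{sw}}$. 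The growth bound \eqref{IIR:growth} is a property of the function $\theta_{\mathfrak{F}_{\mathrm e}}$ itself and so transfers unchanged. Therefore $\SWS(\mathfrak{F})$ admits an IIR whose sub-Markov parameter function is $\theta_{\mathfrak{F}_{\mathrm e}}$.

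Finally, invoking the uniqueness part of Lemma \ref{lem:extension} applied to $\SWS(\mathfrak{F})$ with scheduling space $\mathbb{P}_{sw}$ (this is where the affine-basis observation above is needed), the sub-Markov parameter function of $\SWS(\mathfrak{F})$ is uniquely determined by $\SWS(\mathfrak{F})$. Combined with the previous step this forces $\theta_{\SWS(\mathfrak{F})}=\theta_{\mathfrak{F}_{\mathrm e}}$. There is no real obstacle in this proof; the only point that could be overlooked is the verification that $\mathbb{P}_{sw}$ meets the affine-basis hypothesis required to re-use Lemma \ref{lem:extension} on the switched scheduling space.
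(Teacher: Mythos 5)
Your proposal is correct and follows essentially the same route as the paper's own proof: establish the first equality directly from Lemma \ref{lem:extension}, observe that restricting the IIR of $\mathfrak{F}_{\mathrm e}$ to $\mathcal{P}_{sw}$ exhibits $\theta_{\mathfrak{F}_{\mathrm e}}$ as a valid sub-Markov parameter function for $\SWS(\mathfrak{F})$, and then invoke the uniqueness part of Lemma \ref{lem:extension}, which applies because the affine span of $\mathbb{P}_{sw}$ is all of $\mathbb{R}^{n_\mathrm{p}}$. Your explicit verification that $\mathbb{P}_{sw}$ is an affine basis is exactly the point the paper also flags, so there is nothing to add.
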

\begin{proof}[Proof of Corollary \ref{switchediofunction:col1}]
 Lemma \ref{lem:extension} already implies that $\theta_{\mathfrak{F}}=\theta_{\mathfrak{F}_{\mathrm e}}$.
 It is then left to show that $\theta_{\mathfrak{F}_{\mathrm e}}=\theta_{\SWS(\mathfrak{F})}$.
 Note that with $\theta_{\SWS(\mathfrak{F})}=\theta_{\mathfrak{F}_{\mathrm e}}$ $\SWS(\mathfrak{F})$ 
 satisfies the conditions of having a IIR. 
 Notice that the  affine span of
 the elements of $\mathbb{P}_{sw}$ yields the whole space $\mathbb{R}^{\QNUM}$.
 Hence from Lemma \ref{lem:extension} it follows that $\theta_{\SWS(\mathfrak{F})}$ is uniquely
 determined by $\SWS(\mathfrak{F})$, i.e. $\theta_{\SWS(\mathfrak{F})}=\theta_{\mathfrak{F}_{\mathrm e}}$ is the only choice of 
 $\theta_{\SWS(\mathfrak{F})}$ with which $\SWS(\mathfrak{F})$ admits an IIR.
\end{proof}
\begin{corollary}
\label{switchediofunction:col11}
The function $\SWS$ is injective, i.e. $\mathfrak{F}_1=\mathfrak{F}_2 \iff \SWS(\mathfrak{F}_1)=\SWS(\mathfrak{F}_2)$.
\end{corollary}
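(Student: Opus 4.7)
My plan is to observe that this corollary is a direct consequence of Lemma \ref{lem:extension} together with Corollary \ref{switchediofunction:col1}, so virtually no new work is required. The forward implication is immediate: since $\SWS$ is defined as a function from input-output maps (admitting an IIR) to switched input-output maps, $\mathfrak{F}_1 = \mathfrak{F}_2$ trivially forces $\SWS(\mathfrak{F}_1) = \SWS(\mathfrak{F}_2)$.

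For the reverse implication, I would chain together the two previous results. Assume $\SWS(\mathfrak{F}_1) = \SWS(\mathfrak{F}_2)$. By Corollary \ref{switchediofunction:col1}, we have
\[
\theta_{\mathfrak{F}_1} \;=\; \theta_{\SWS(\mathfrak{F}_1)} \;=\; \theta_{\SWS(\mathfrak{F}_2)} \;=\; \theta_{\mathfrak{F}_2},
\]
so the sub-Markov parameters of $\mathfrak{F}_1$ and $\mathfrak{F}_2$ coincide. Since both $\mathfrak{F}_1$ and $\mathfrak{F}_2$ admit an IIR (this is part of the definition of the domain of $\SWS$), Lemma \ref{lem:extension} — which asserts that input-output functions admitting an IIR are in one-to-one correspondence with their sub-Markov parameters — yields $\mathfrak{F}_1 = \mathfrak{F}_2$.

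There is essentially no obstacle here. The only thing worth double-checking is that the domain of $\SWS$ consists precisely of input-output functions admitting an IIR (so that Lemma \ref{lem:extension} applies to both $\mathfrak{F}_1$ and $\mathfrak{F}_2$), and that the extension $\mathfrak{F}_{\mathrm e}$ used in the definition of $\SWS$ is itself an IIR-admitting function so that Corollary \ref{switchediofunction:col1} can be invoked — both facts are already established earlier in the paper. The corollary could therefore be stated with a one-line proof: it is an immediate consequence of Lemma \ref{lem:extension} and Corollary \ref{switchediofunction:col1}.
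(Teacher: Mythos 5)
Your proposal is correct and follows exactly the paper's own one-line argument: the paper chains $\mathfrak{F}_1=\mathfrak{F}_2 \iff \theta_{\mathfrak{F}_1}=\theta_{\mathfrak{F}_2} \iff \theta_{\SWS(\mathfrak{F}_1)}=\theta_{\SWS(\mathfrak{F}_2)} \iff \SWS(\mathfrak{F}_1)=\SWS(\mathfrak{F}_2)$ using Lemma \ref{lem:extension} and Corollary \ref{switchediofunction:col1}, which is precisely your reverse-direction argument combined with the trivial forward direction.
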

\begin{proof}
 By Lemma \ref{lem:extension} and
 Corollary \ref{switchediofunction:col1},
 $\mathfrak{F}_1 = \mathfrak{F}_2 \iff \theta_{\mathfrak{F}_1}=\theta_{\mathfrak{F}_2} \iff \theta_{\SWS(\mathfrak{F}_1)}=\theta_{\SWS(\mathfrak{F}_2)} \iff \SWS(\mathfrak{F}_1)=\SWS(\mathfrak{F}_2)$.
\end{proof}

The correspondence between LPV-SSAs and LSS-SSs can now be stated.
\begin{definition}[LSS-SS associated with LPV-SSA]
 Let $\Sigma$ be an LPV-SSAs of the form~\eqref{equ:alpvss}-\eqref{equ:affdep}. 
Then, the LSS-SS $\mathfrak{S}(\Sigma)$ associated with $\Sigma$ is the following LSS-SS:
 \begin{equation}
   \mathfrak{S}(\Sigma) = \left(\mathbb{P}_{sw}, \{(A_i, B_i, C_i,0) \}_{q=0}^{\QNUM} \right) .
 \end{equation}
\end{definition}
The following theorem collects the main properties related to the correspondence between LSS-SSs and LPV-SSAs.
\begin{theorem}\label{th:alpvlsrelation}
  Let $\mathfrak{F}$ be an input-output function of the form \eqref{equ:iofunction} and assume that $\mathfrak{F}$ admits an IIR. 
  Let $\Sigma$ be an LPV-SSA of the form \eqref{equ:alpvss}.
  \begin{enumerate}
  \item
  \label{thm:alpvsrelation:part0}
    For every initial state $x \in \mathbb{X}$ of $\Sigma$, $\SWS(\mathfrak{Y}_{\Sigma,x})=\mathfrak{Y}_{\mathfrak{S}(\Sigma),x}$. 
  \item 
  \label{thm:alpvsrelation:part1}
   $\Sigma$ is a realization of the input-output function $\mathfrak{F}$ from the initial state $x_{\mathrm o}$ 
  if and only if $\mathfrak{S}(\Sigma)$ is a realization of $\SWS(\mathfrak{F})$ from the initial state $x_{\mathrm o}$. (see Def.~\ref{def:switchediofunction}).
  \item 
  \label{thm:alpvsrelation:part2}
    $\dim \mathfrak{S}(\Sigma)=\dim \Sigma$.
  \item 
  \label{thm:alpvsrelation:part3}
    Two  LPV-SSAs  $\Sigma_1$ and $\Sigma_2$ are isomorphic 
  if and only if $\mathfrak{S}(\Sigma_1)$ is isomorphic to $\mathfrak{S}(\Sigma_2)$.
  \item 
  \label{thm:alpvsrelation:part4}
   $\Sigma$ is  span-reachable from $x_{\mathrm o}$ if and only if $\mathfrak{S}(\Sigma)$  is span-reachable from $x_{\mathrm o}$. 
   $\Sigma$ is observable if and only if $\mathfrak{S}(\Sigma)$  is observable.
  \end{enumerate} 
\end{theorem}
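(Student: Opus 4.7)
The overall strategy is to exploit the fact that $\mathfrak{S}(\Sigma)$ and $\Sigma$ share the same system matrices $\{A_i,B_i,C_i\}_{i=0}^{\QNUM}$ and differ only in the scheduling domain. The key point is that the affine-hull hypothesis $\mathrm{Aff}\,\mathbb{P}=\mathbb{R}^{\QNUM}$ required by Lemma \ref{lem:extension} is satisfied not only by $\mathbb{P}$ (by assumption on $\Sigma$) but also by $\mathbb{P}_{sw}=\{0,e_1,\ldots,e_{\QNUM}\}$, since the latter is an affine basis of $\mathbb{R}^{\QNUM}$. Hence, all the IIR-based machinery (Lemma \ref{lem:realiofunction}, Lemma \ref{lem:extension}, Corollary \ref{switchediofunction:col1}, Corollary \ref{switchediofunction:col11}) applies both to $\Sigma$ and to $\mathfrak{S}(\Sigma)$.

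I would begin with part \ref{thm:alpvsrelation:part0}, from which the rest cascade. By Lemma \ref{lem:realiofunction}, $\mathfrak{Y}_{\Sigma,x}$ admits an IIR with sub-Markov parameters $\eta_{i,\mathfrak{Y}_{\Sigma,x}}(s)=C_iA_sx$ and $\theta_{i,j,\mathfrak{Y}_{\Sigma,x}}(s)=C_iA_sB_j$. Applying the same lemma to the LSS-SS $\mathfrak{S}(\Sigma)$, viewed as an LPV-SSA over $\mathbb{P}_{sw}$, yields identical formulas for $\theta_{\mathfrak{Y}_{\mathfrak{S}(\Sigma),x}}$ because the matrices coincide. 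Combined with Corollary \ref{switchediofunction:col1}, $\theta_{\SWS(\mathfrak{Y}_{\Sigma,x})}=\theta_{\mathfrak{Y}_{\Sigma,x}}=\theta_{\mathfrak{Y}_{\mathfrak{S}(\Sigma),x}}$. Since both $\SWS(\mathfrak{Y}_{\Sigma,x})$ and $\mathfrak{Y}_{\mathfrak{S}(\Sigma),x}$ are functions on $\mathcal{U}\times\mathcal{P}_{sw}$ admitting an IIR with the same Markov parameters, the uniqueness statement of Lemma \ref{lem:extension} forces their equality. Part \ref{thm:alpvsrelation:part1} then follows by chaining part \ref{thm:alpvsrelation:part0} with the injectivity of $\SWS$ (Corollary \ref{switchediofunction:col11}): $\mathfrak{Y}_{\Sigma,x_{\mathrm o}}=\mathfrak{F}$ iff $\SWS(\mathfrak{Y}_{\Sigma,x_{\mathrm o}})=\SWS(\mathfrak{F})$ iff $\mathfrak{Y}_{\mathfrak{S}(\Sigma),x_{\mathrm o}}=\SWS(\mathfrak{F})$.

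Parts \ref{thm:alpvsrelation:part2} and \ref{thm:alpvsrelation:part3} are essentially definitional. The state dimension is preserved because $\mathfrak{S}(\Sigma)$ uses the same state space $\mathbb{R}^{\NX}$ as $\Sigma$. For isomorphism, Definition \ref{isomorphism} is a purely matrix-level condition on $\{A_i,B_i,C_i,D_i\}_{i=0}^{\QNUM}$, with no reference to the scheduling domain. Since $\mathfrak{S}(\Sigma_1)$ and $\mathfrak{S}(\Sigma_2)$ carry the same matrix data as $\Sigma_1$ and $\Sigma_2$, a nonsingular $T$ satisfies the isomorphism equations for the pair $(\Sigma_1,\Sigma_2)$ if and only if it satisfies them for $(\mathfrak{S}(\Sigma_1),\mathfrak{S}(\Sigma_2))$.

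Part \ref{thm:alpvsrelation:part4} is the main obstacle, since the span-reachable set is defined through trajectories with scheduling signals in $\mathcal{P}$ versus $\mathcal{P}_{sw}$, and these differ. My plan is to show that in both systems the reachable subspace equals $\mathcal{R}_\ast=\mathrm{Im}\,\mathcal{R}_{\NX-1}$ and that the observability test reduces to $\mathcal{O}_\ast=\mathrm{Ker}\,\mathcal{O}_{\NX-1}=\{0\}$, both of which depend only on $x_{\mathrm o}$ and $\{A_i,B_i,C_i\}$. Observability is the easier direction: by Lemma \ref{lem:realiofunction} and Lemma \ref{lem:extension}, $\mathfrak{Y}_{\Sigma,x_1}=\mathfrak{Y}_{\Sigma,x_2}$ holds iff $C_iA_s(x_1-x_2)=0$ for every $i\in\AQ$ and $s\in\Words$, which is precisely $x_1-x_2\in\mathcal{O}_\ast$; the identical reduction applies to $\mathfrak{S}(\Sigma)$, so observability is equivalent on both sides. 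For span-reachability, containment of the reachable set in $\mathcal{R}_\ast$ is automatic from $A_i$-invariance of $\mathcal{R}_\ast$ and $x_{\mathrm o}\in\mathcal{R}_\ast$, $\mathrm{Im}\,B_i\subseteq\mathcal{R}_\ast$; the reverse inclusion is the delicate step, and I would establish it by exhibiting piecewise-constant scheduling trajectories with values drawn from an affine basis of the scheduling set (available inside both $\mathbb{P}$ and $\mathbb{P}_{sw}$) and short input impulses, and then extracting a basis of $\mathcal{R}_\ast$ from the resulting state-values via derivatives/differences with respect to switching times, in the spirit of the analyticity argument used in the proof of Lemma \ref{conv_ser:uniq}. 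This shows the reachable subspace coincides with $\mathcal{R}_\ast$ for both $\Sigma$ and $\mathfrak{S}(\Sigma)$, completing part \ref{thm:alpvsrelation:part4}.
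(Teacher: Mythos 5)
Parts \ref{thm:alpvsrelation:part0}--\ref{thm:alpvsrelation:part3} of your proposal match the paper's proof essentially verbatim: equality of sub-Markov parameters via Lemma \ref{lem:realiofunction} plus uniqueness from Lemma \ref{lem:extension} for part \ref{thm:alpvsrelation:part0}, injectivity of $\SWS$ for part \ref{thm:alpvsrelation:part1}, and the observation that state space and system matrices are shared for parts \ref{thm:alpvsrelation:part2}--\ref{thm:alpvsrelation:part3}. Your observability argument in part \ref{thm:alpvsrelation:part4} is also sound and is a minor variant of the paper's: since $\theta_{i,j,\mathfrak{Y}_{\Sigma,x}}$ does not depend on $x$ and $\eta_{i,\mathfrak{Y}_{\Sigma,x}}(s)=C_iA_sx$ is linear in $x$, indistinguishability of $x_1,x_2$ reduces in both systems to the same matrix condition $C_iA_s(x_1-x_2)=0$; the paper phrases this instead through $\SWS(\mathfrak{Y}_{\Sigma,x})=\mathfrak{Y}_{\mathfrak{S}(\Sigma),x}$ and Corollary \ref{switchediofunction:col11}, which is equivalent.

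The one genuine gap is your treatment of span-reachability. You propose to prove that the reachable subspace of each system equals $\mathrm{Im}\,\mathcal{R}_{\NX-1}$, and you concede that the inclusion $\mathrm{Im}\,\mathcal{R}_{\NX-1}\subseteq\mathrm{Span}\{\mathfrak{X}_{\Sigma,x_{\mathrm o}}(u,p)(t)\}$ is ``the delicate step,'' to be handled by piecewise-constant schedules, impulse inputs, and derivatives with respect to switching times. That step is exactly the hard half of Theorem \ref{theo:reachobs:rank:cond}, which the paper does not prove elementarily but imports from the switched-systems literature; in CT it requires the full analyticity-of-switching-times machinery (and in addition the input contribution must be extracted separately from the free response), none of which your sketch supplies. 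The paper sidesteps all of this: span-reachability from $x_{\mathrm o}$ is equivalent to the statement that the only $\nu\in\X$ with $\nu^{\top}\mathfrak{X}_{\Sigma,x_{\mathrm o}}\equiv 0$ is $\nu=0$; for each fixed $\nu$ the scalar function $\mathfrak{F}_\nu=\nu^{\top}\mathfrak{X}_{\Sigma,x_{\mathrm o}}$ is itself the input-output function of the auxiliary LPV-SSA $\Sigma_\nu=(\mathbb{P},\{A_i,B_i,\nu^{\top}\delta_{i0}\}_{i=0}^{\QNUM})$ from $x_{\mathrm o}$, so by part \ref{thm:alpvsrelation:part0} applied to $\Sigma_\nu$ and the IIR uniqueness already established, $\mathfrak{F}_\nu\equiv 0$ if and only if $\nu^{\top}\mathfrak{X}_{\mathfrak{S}(\Sigma),x_{\mathrm o}}\equiv 0$. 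This reduces the reachability equivalence to machinery you have already built, requires no trajectory analysis, and never needs the rank characterization at all. You should either adopt this reduction or fully write out the CT derivative argument; as it stands the span-reachability claim is asserted but not proved.
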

\begin{proof}
  \textbf{Proof of Part \ref{thm:alpvsrelation:part0}.}
   From Lemma \ref{lem:realiofunction} it follows that
   $\forall i,j \in \AQ, \forall s \in \Words$, $\eta_{i,\SWS(\mathfrak{Y}_{\Sigma,x})}(s) = \eta_{i,\mathfrak{Y}_{\Sigma,x}}(s)=   C_iA_sx = \eta_{i,\mathfrak{Y}_{\mathfrak{S}(\Sigma),x}}$, 
  and $\theta_{i,j,\SWS(\mathfrak{Y}_{\Sigma,x})}(s) = \theta_{i,j,\mathfrak{Y}_{\Sigma,x}}(s)=   C_iA_sB_j = \theta_{i,j,\mathfrak{Y}_{\mathfrak{S}(\Sigma),x}}$. That it,
   $\theta_{\SWS(\mathfrak{Y}_{\Sigma,x})}=\theta_{\mathfrak{Y}_{\mathfrak{S}(\Sigma),x}}$. 
  Since $\SWS(\mathfrak{Y}_{\Sigma,x})$, $\mathfrak{Y}_{\mathfrak{S}(\Sigma),x}$ are both realizable by $\Sigma$, they admit an IIR, by Lemma \ref{lem:extension}
   $\theta_{\SWS(\mathfrak{Y}_{\Sigma,x})}=\theta_{\mathfrak{Y}_{\mathfrak{S}(\Sigma),x}}$ implies
  $\SWS(\mathfrak{Y}_{\Sigma,x})=\mathfrak{Y}_{\mathfrak{S}(\Sigma),x}$. 
    
  \textbf{Proof of Part \ref{thm:alpvsrelation:part1}.}
   Notice that $\Sigma$ is a realization of $\mathfrak{F}$ from the initial state $x_{\mathrm o}$ 
   if and only if $\mathfrak{Y}_{\Sigma,x_{\mathrm o}}=\mathfrak{F}$. By Corollary \ref{switchediofunction:col11},
   $\mathfrak{Y}_{\Sigma,x_{\mathrm o}}=\mathfrak{F}$ is equivalent to $\SWS(\mathfrak{Y}_{\Sigma,x_{\mathrm o}})=\SWS(\mathfrak{F})$. From Part \ref{thm:alpvsrelation:part0} it follows that
   $\SWS(\mathfrak{Y}_{\Sigma,x_{\mathrm o}})=\SWS(\mathfrak{F})$ is equivalent to 
   $\mathfrak{Y}_{\mathfrak{S}(\Sigma),x_{\mathrm o}}=\SWS(\mathfrak{F})$, and the latter is equivalent to $\mathfrak{S}(\Sigma)$ being a realization of $\SWS(\mathfrak{F})$.


  \textbf{Proof of Part \ref{thm:alpvsrelation:part2}.}
     Follows by noticing that the state-spaces of $\Sigma$ and $\mathfrak{S}(\Sigma)$ are the same.

  \textbf{Proof of Part \ref{thm:alpvsrelation:part3}.}
     Follows by noticing that the system matrices of $\Sigma$ and $\mathfrak{S}(\Sigma)$ are the same.

  \textbf{Proof of Part \ref{thm:alpvsrelation:part4}.}
   First we show that $\Sigma$ is span-reachable from $x_{\mathrm o}$
   if and only if $\SWS(\Sigma)$ is span-reachable from $x_{\mathrm o}$.
   To this end, consider the function input-to-state function
   $\mathfrak{X}_{\Sigma,x_{\mathrm o}}:\mathcal{U} \times \mathcal{P} \rightarrow \mathcal{X}$
   of $\Sigma$. 

   It is easy to see that span reachability of $\Sigma$ is equivalent to
   $\forall \nu \in \X: (\nu^T\mathfrak{X}_{\Sigma,x_{\mathrm o}} = 0 \iff \nu = 0)$. 
   For every $\nu \in \X$, consider the function $\mathfrak{F}_{\nu}(u,p)=\nu^T\mathrm{X}_{\Sigma,x_{\mathrm o}}(u,p)$.
   It is clear that the LPV-SSA $\Sigma_{\nu}$, 
   $\Sigma_{\nu}=(\mathbb{P},\{A_i,B_i,\nu\}_{i=0}^{\QNUM})$,
   is a realization of $\mathfrak{F}_{\nu}$ from the initial state $x_{\mathrm o}$.
   It is easy to see that
   $\mathfrak{F}_{\nu}=0$ if and only if $\theta_{\mathfrak{F}_{\nu}}=\theta_{\SWS(\mathfrak{F}_{\nu})}=0$ and hence
   $\SWS(\mathfrak{F}_{\nu})=0 \iff \mathfrak{F}_{\nu}=0$. But from Part \ref{thm:alpvsrelation:part0}  it follows that
   $\SWS(\mathfrak{F}_{\nu})$ equals the function $\nu^T\mathfrak{X}_{\SWS(\Sigma),x_{\mathrm o}}$.
   Hence,
   $\forall \nu \in \X: (\nu^T\mathfrak{X}_{\Sigma,x_{\mathrm o}} = 0 \iff \nu = 0)$ is equivalent to
   $\forall \nu \in \X: (\nu^T \mathfrak{X}_{\SWS(\Sigma),x_{\mathrm o}} = 0 \iff \nu = 0)$.
   The latter is equivalent to span-reachability of $\SWS(\Sigma)$ from $x_{\mathrm o}$.

   Next, we show that $\Sigma$ is observable if and only if
   $\SWS(\Sigma)$ is observable. To this end, notice
   that 
  for any state $x \in \X$
   $\SWS(\mathfrak{Y}_{\Sigma,x})=\mathfrak{Y}_{\SWS(\Sigma),x}$.
   Hence, from Corollary \ref{switchediofunction:col1} it follows that
   $\mathfrak{Y}_{\Sigma,x_1}=\mathfrak{Y}_{\Sigma,x_2}$ if and only if
   $\mathfrak{Y}_{\SWS(\Sigma),x_1}=\mathfrak{Y}_{\SWS(\Sigma),x_2}$.
   From this it follows that observability of $\Sigma$ and $\SWS(\Sigma)$ are
   equivalent.
\end{proof}
 We have just presented a transformation from LPV-SSAs to LSS-SSs. Next, we present the reverse transformation, mapping LSS-SSs to LPV-SSAs. 
 To this end, let $\mathbb{P} \subseteq \mathbb{R}^{\QNUM}$ be a space of
 scheduling parameters such that the affine span of elements of $\mathbb{P}$ equals $\mathbb{R}^{\QNUM}$.
 \begin{definition}[LPV-SSA associated with LSS-SS]
 Let $\mathcal{H}=(\mathbb{P}_{sw}, (A_i,B_i,C_i,0)_{i=0}^{\QNUM})$ be a LSS-SS.
 Define the LPV-SSA associated with $\mathcal{H}$ as 
 $\LPV(\mathcal{H})=(\mathbb{P},(A_i,B_i,C_i,0)_{i=0}^{\QNUM})$.
 \end{definition}
 It is easy to see that $\SWS(\LPV(\mathcal{H}))=\mathcal{H}$, from which, using
 Theorem \ref{th:alpvlsrelation}, we can deduce the following.
 \begin{corollary}
 \label{th:alpvlsrelation:col1}
  Let $\mathfrak{F}$ be an input-output function of the form \eqref{equ:iofunction} admitting an IIR and let $\mathcal{H}$ be an LSS-SS.
  Then the following hold.
  \begin{itemize}
  \item
 $\mathcal{H}$ is a realization of $\SWS(\mathfrak{F})$ from the initial state $x_{\mathrm o}$,
 if and only if $\LPV(\mathcal{H})$ is a realization of $\mathfrak{F}$ from the initial state $x_{\mathrm o}$.  
 \item
 $\mathcal{H}$ is span-reachable from $x_{\mathrm o}$ (observable), if and only if
 $\LPV(\mathcal{H})$ is span-reachable from $x_{\mathrm o}$ (respectively observable).
 \end{itemize}
 \end{corollary}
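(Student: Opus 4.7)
The plan is to derive the corollary as an essentially immediate consequence of Theorem~\ref{th:alpvlsrelation} by exploiting the identity $\SWS(\LPV(\mathcal{H}))=\mathcal{H}$. First I would verify this identity: by definition, $\LPV(\mathcal{H})$ is the LPV-SSA with scheduling space $\mathbb{P}$ and the same tuple of matrices $\{(A_i,B_i,C_i,0)\}_{i=0}^{\QNUM}$ as $\mathcal{H}$, and $\SWS$ applied to an LPV-SSA just retains the system matrices and replaces the scheduling space by $\mathbb{P}_{sw}$. Since $\mathcal{H}$ already has scheduling space $\mathbb{P}_{sw}$, we recover $\mathcal{H}$ exactly.

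Next, for the first bullet, I would apply Theorem~\ref{th:alpvlsrelation}, Part~\ref{thm:alpvsrelation:part1}, with $\Sigma$ taken to be $\LPV(\mathcal{H})$. This yields that $\LPV(\mathcal{H})$ is a realization of $\mathfrak{F}$ from $x_{\mathrm o}$ if and only if $\mathfrak{S}(\LPV(\mathcal{H}))$ is a realization of $\SWS(\mathfrak{F})$ from $x_{\mathrm o}$; substituting the identity $\SWS(\LPV(\mathcal{H}))=\mathcal{H}$ gives the claim. (Here one should double-check that Theorem~\ref{th:alpvlsrelation}'s hypothesis is satisfied, namely that $\mathfrak{F}$ admits an IIR, which is given.)

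For the second bullet, I would analogously apply Theorem~\ref{th:alpvlsrelation}, Part~\ref{thm:alpvsrelation:part4}, to $\Sigma=\LPV(\mathcal{H})$. This gives that $\LPV(\mathcal{H})$ is span-reachable from $x_{\mathrm o}$ (resp.\ observable) if and only if $\mathfrak{S}(\LPV(\mathcal{H}))$ is span-reachable from $x_{\mathrm o}$ (resp.\ observable). Using $\SWS(\LPV(\mathcal{H}))=\mathcal{H}$ again, this is exactly the required equivalence.

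There is no real obstacle here; the only delicate point is being explicit about the identification $\SWS(\LPV(\mathcal{H}))=\mathcal{H}$, since $\SWS$ is defined slightly asymmetrically on LSS-SSs (as a restriction of scheduling space) versus on input-output functions (via the unique extension from Lemma~\ref{lem:extension}). Once this bookkeeping is in place, the proof is a direct two-line invocation of Theorem~\ref{th:alpvlsrelation}, so the writeup can remain short.
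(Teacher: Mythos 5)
Your proposal is correct and follows exactly the paper's own route: the paper derives this corollary by noting that $\SWS(\LPV(\mathcal{H}))=\mathcal{H}$ and then invoking Theorem~\ref{th:alpvlsrelation} (Parts~\ref{thm:alpvsrelation:part1} and~\ref{thm:alpvsrelation:part4}), which is precisely your argument. Your added care about the identification $\SWS(\LPV(\mathcal{H}))=\mathcal{H}$ is a reasonable elaboration of what the paper dismisses as ``easy to see.''
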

 We can derive the following corollary of Theorem \ref{th:alpvlsrelation:col1} and Corollary \ref{th:alpvlsrelation:col1}:
 \begin{corollary}[Minimality of LPV-SSA and LSS-SS]
 \label{th:alpvlsrelation:min}
  An LPV-SSA $\Sigma$ is minimal w.r.t. an initial state $x_{\mathrm o}$ if and only if the LSS-SS $\mathfrak{S}(\Sigma)$ is minimal w.r.t $x_{\mathrm o}$.
 \end{corollary}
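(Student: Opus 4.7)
My plan is to prove this as a direct corollary of the transfer machinery already set up in Theorem \ref{th:alpvlsrelation} and Corollary \ref{th:alpvlsrelation:col1}, without reproving anything about reachability/observability. The key observation is that the correspondences $\mathfrak{S}(\cdot)$ and $\LPV(\cdot)$ are dimension-preserving (Part \ref{thm:alpvsrelation:part2} of Theorem \ref{th:alpvlsrelation}) and send realizations to realizations in a bijective way at the level of input-output equivalents, with $\SWS(\mathfrak{Y}_{\Sigma,x_{\mathrm o}}) = \mathfrak{Y}_{\mathfrak{S}(\Sigma),x_{\mathrm o}}$ identifying the relevant input-output functions. Thus any attempted ``smaller'' realization on one side can be transported to the other side at equal dimension, contradicting minimality.

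More concretely, for the direction ($\Rightarrow$), assume $\Sigma$ is minimal w.r.t.\ $x_{\mathrm o}$ and let $\mathcal{H}$ be any LSS-SS realization of $\mathfrak{Y}_{\mathfrak{S}(\Sigma),x_{\mathrm o}} = \SWS(\mathfrak{Y}_{\Sigma,x_{\mathrm o}})$, say from an initial state $x'$. By Corollary \ref{th:alpvlsrelation:col1}, $\LPV(\mathcal{H})$ is then an LPV-SSA realization of $\mathfrak{Y}_{\Sigma,x_{\mathrm o}}$ from $x'$, and $\dim \LPV(\mathcal{H}) = \dim \mathcal{H}$. Minimality of $\Sigma$ gives $\dim \Sigma \le \dim \LPV(\mathcal{H}) = \dim \mathcal{H}$, and since $\dim \mathfrak{S}(\Sigma) = \dim \Sigma$, we conclude $\dim \mathfrak{S}(\Sigma) \le \dim \mathcal{H}$, so $\mathfrak{S}(\Sigma)$ is minimal w.r.t.\ $x_{\mathrm o}$.

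For the direction ($\Leftarrow$), assume $\mathfrak{S}(\Sigma)$ is minimal w.r.t.\ $x_{\mathrm o}$ and let $\Sigma'$ be any LPV-SSA realization of $\mathfrak{Y}_{\Sigma,x_{\mathrm o}}$ from some $x'$. Part \ref{thm:alpvsrelation:part1} of Theorem \ref{th:alpvlsrelation} then yields that $\mathfrak{S}(\Sigma')$ is a realization of $\SWS(\mathfrak{Y}_{\Sigma,x_{\mathrm o}}) = \mathfrak{Y}_{\mathfrak{S}(\Sigma),x_{\mathrm o}}$ from $x'$, with $\dim \mathfrak{S}(\Sigma') = \dim \Sigma'$. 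Minimality of $\mathfrak{S}(\Sigma)$ gives $\dim \mathfrak{S}(\Sigma) \le \dim \mathfrak{S}(\Sigma')$, i.e.\ $\dim \Sigma \le \dim \Sigma'$, so $\Sigma$ is minimal w.r.t.\ $x_{\mathrm o}$.

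I do not expect a genuine obstacle here: the entire content of the proof is bookkeeping around the identities $\SWS(\mathfrak{Y}_{\Sigma,x}) = \mathfrak{Y}_{\mathfrak{S}(\Sigma),x}$ and the dimension preservation of $\mathfrak{S}$ and $\LPV$. The only point that needs minor care is making sure that the witness initial state $x'$ supplied by the definition of ``realization'' on one side can legitimately be reused on the other side, which is exactly what Part \ref{thm:alpvsrelation:part1} of Theorem \ref{th:alpvlsrelation} and Corollary \ref{th:alpvlsrelation:col1} guarantee. Conceptually, this corollary just records that the heavy lifting (the equivalence of span-reachability, observability, and realization across $\mathfrak{S}$ and $\LPV$) has already promoted minimality in the LSS world to minimality in the LPV-SSA world.
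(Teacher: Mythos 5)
Your proposal is correct and is essentially identical to the paper's own proof: both directions transport a competing realization across the $\mathfrak{S}(\cdot)$/$\LPV(\cdot)$ correspondence using Theorem \ref{th:alpvlsrelation} and Corollary \ref{th:alpvlsrelation:col1}, invoke $\SWS(\mathfrak{Y}_{\Sigma,x})=\mathfrak{Y}_{\mathfrak{S}(\Sigma),x}$, and conclude via dimension preservation. No gaps.
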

 \begin{proof}[Proof of Corollary \ref{th:alpvlsrelation:min}]
   Assume that $\Sigma$ is minimal w.r.t. an initial state $x_{\mathrm o}$, i.e. it is a minimal realization of $\mathfrak{F}=\mathfrak{Y}_{\Sigma,x_{\mathrm o}}$. 
   From Theorem \ref{th:alpvlsrelation} it then follows that $\mathfrak{S}(\Sigma)$ is a realization of $\SWS(\mathfrak{F})$ from the
   initial state $x_{\mathrm o}$.  Assume that $\mathcal{H}^{'}$ is an LSS-SSA and $\mathcal{H}^{'}$ is a realization of $\SWS(\mathfrak{F})$. It then
   follows that $\Sigma^{'}=\LPV(\mathcal{H}^{'})$ is a realization of $\mathfrak{F}$. Since $\Sigma$ is a minimal realization of $\mathfrak{F}$,
   it then follows that $\dim \mathfrak{S}(\Sigma) = \dim \Sigma \le \dim \Sigma^{'} = \dim \mathcal{H}^{'}$. 
   Conversely, assume that  $\mathfrak{S}(\Sigma)$ is minimal w.r.t $x_{\mathrm o}$, i.e. assume that $\mathfrak{S}(\Sigma)$ is a minimal
   realization of $\mathfrak{Y}_{\mathfrak{S}(\Sigma),x_{\mathrm o}}$.  Assume that $\Sigma^{'}$ is an LPV-SSA such that $\Sigma^{'}$ is a realization of
   $\mathcal{F}=\mathfrak{Y}_{\Sigma,x_{\mathrm o}}$. From Theorem \ref{th:alpvlsrelation} it follows that $\mathfrak{S}(\Sigma^{'})$ is a realization of
   $\SWS(\mathcal{F})$. Note that $\mathfrak{Y}_{\mathfrak{S}(\Sigma),x_{\mathrm o}}=\SWS(\mathfrak{F})$, and hence, by minimality of $\mathfrak{S}(\Sigma)$
   w.r.t. $x_{\mathrm o}$, $\dim \Sigma=\dim \mathfrak{S}(\Sigma) \le \dim \mathfrak{S}(\Sigma^{'}) = \dim \Sigma^{'}$. That is,
   $\Sigma$ is indeed a minimal realization of $\mathfrak{F}=\mathfrak{Y}_{\Sigma,x_{\mathrm o}}$. 
 \end{proof}

\subsection{Proofs of the results on Kalman-style realization theory for LPV-SSAs}
\label{para:proof:kalman}
Based on the relationship between LSS-SSs and LPV-SSAs explained 
in the previous section, we can use realization theory of LSS-SSs
\cite{Pet11,PetCocv11,Pet12}
to prove the results of Section \ref{para:kalman}.  
\begin{proof}[Proof of Theorem \ref{theo:min}]
 From Corollary \ref{th:alpvlsrelation:min} it follows that 
 $\Sigma$ is is minimal w.r.t $x_{\mathrm o}$, if and only if
 $\mathfrak{S}(\Sigma)$ is minimal w.r.t $x_{\mathrm o}$. 
 In turn, by \cite[Theorem 3]{Pet12} (DT) or \cite[Theorem 3]{PetCocv11} (CT),
 $\mathfrak{S}(\Sigma)$ is a minimal w.r.t. $x_{\mathrm o}$,  if and only if
 $\mathfrak{S}(\Sigma)$ is observable and span-reachable from $x_{\mathrm o}$.
 From Theorem \ref{th:alpvlsrelation} it follows that
 $\mathfrak{S}(\Sigma)$ is observable and span-reachable from $x_{\mathrm o}$ if and only if
 $\Sigma$ is span-reachable from $x_{\mathrm o}$ and observable.
 Hence, 
 $\Sigma$ is is minimal w.r.t $x_{\mathrm o}$ if and only if 
 $\Sigma$ is span-reachable from $x_{\mathrm o}$ and observable.

 Assume that $\Sigma$ and $\Sigma^{'}$ are minimal w.r.t. $x_{\mathrm o}$ and $x_{\mathrm o}^{'}$, and 
 $\Sigma$ and $\Sigma^{'}$ are weakly equivalent w.r.t. $x_{\mathrm o}$ and $x_{\mathrm o}^{'}$, i.e. $\mathfrak{Y}_{\Sigma,x_{\mathrm o}}=\mathfrak{Y}_{\Sigma^{'},x^{'}_{\mathrm o}}$.  It then follows that $\mathfrak{S}(\Sigma)$ is minimal w.r.t.  $x_{\mathrm o}$ and 
 $\mathfrak{S}(\Sigma^{'})$ is minimal w.r.t. $x_{\mathrm o}^{'}$. Moreover, $\mathfrak{Y}_{\mathfrak{S}(\Sigma),x_{\mathrm o}}=\mathfrak{Y}_{\mathfrak{S}(\Sigma^{'}),x^{'}_{\mathrm o}}$.
 From \cite[Theorem 3]{Pet12}, \cite[Theorem 3]{PetCocv11} it then
 follows $\mathfrak{S}(\Sigma)$ to $\mathfrak{S}(\Sigma^{'})$ are isomorphic, and 
 hence by Theorem \ref{th:alpvlsrelation}, $\Sigma$ and $\Sigma^{'}$ are isomorphic. 
\end{proof}
 Note that if   $T$ is an isomorphism from $\Sigma$ to $\Sigma^{'}$, and $\Sigma$ and $\Sigma^{'}$ are minimal and weakly equivalent w.r.t.
 $x_{\mathrm o}$ and $x^{'}_{\mathrm o}$ respectively,  then   $Tx_{\mathrm o}=x^{'}_{\mathrm o}$. 
 Indeed, it is not difficult to see that $\mathfrak{Y}_{\Sigma,x_{\mathrm o}}=\mathfrak{Y}_{\Sigma^{'},Tx_{\mathrm{o}}}$.
 Since $\mathfrak{Y}_{\Sigma,x_{\mathrm o}}=\mathfrak{Y}_{\Sigma^{'},x^{'}_{\mathrm o}}$, it then follows that
 $\mathfrak{Y}_{\Sigma^{'},Tx_{\mathrm{o}}}=\mathfrak{Y}_{\Sigma^{'},x^{'}_{\mathrm o}}$, and by observability of $\Sigma^{'}$ this implies that
 $Tx_{\mathrm o}=x^{'}_{\mathrm o}$. 

\begin{proof}[Proof of Theorem \ref{theo:min:strong}]
  It is enough to prove that $\Sigma$ is strongly minimal if and only if it is minimal w.r.t. $0$. The rest then follows from
  Theorem \ref{theo:min:strong}. By definition, if $\Sigma$ is strongly minimal, then it is minimal w.r.t. any initial state, including $0$. 
  Conversely, assume that $\Sigma$ is minimal w.r.t. $0$. Let $x_{\mathrm o}$ be any state of $\Sigma$ and let
  $\mathfrak{F}=\mathfrak{Y}_{\Sigma,x_{\mathrm o}}$. We will show that $\Sigma$ is a minimal realization of $\mathfrak{F}$. To this end, consider
  any LPV-SSA $\Sigma^{'}$ such that $\Sigma^{'}$ is a realization of $\mathfrak{F}$ from some initial state $x_{\mathrm o}^{'}$.  We will show that
  $\dim \Sigma \le \dim \Sigma^{'}$. 
  To this end, notice that 
  $\mathfrak{Y}_{\Sigma^{'}x^{'}_{\mathrm o}}=\mathfrak{F}=\mathfrak{Y}_{\Sigma,x_{\mathrm o}}$, and that for all $u \in \mathcal{U}$, $p \in \mathcal{P}$,
  \( \mathfrak{Y}_{\Sigma^{'},0}(u,p)=\mathfrak{Y}_{\Sigma^{'},x_{\mathrm o}^{'}}(u,p)-\mathfrak{Y}_{\Sigma^{'},x^{'}_{\mathrm o}}(0,p)$ and
   \( \mathfrak{Y}_{\Sigma,x_{\mathrm o}}(u,p)-\mathfrak{Y}_{\Sigma,x_{\mathrm o}}(0,p)= \mathfrak{Y}_{\Sigma,0}(u,p) \).
  From this remark it follows that if $\mathfrak{Y}_{\Sigma^{'}x^{'}_{\mathrm o}}=\mathfrak{Y}_{\Sigma,x_{\mathrm o}}$, then 
  $\mathfrak{Y}_{\Sigma,0}=\mathfrak{Y}_{\Sigma^{'},0}$. Since $\Sigma$ is minimal w.r.t. $0$, it then implies that $\dim \Sigma \le \dim \Sigma^{'}$. 
\end{proof}
\begin{proof}[Proof of Theorem \ref{theo:reachobs:rank:cond}]
For DT, from \cite[Theorem 4]{Pet12} it follows
that $\Rank \mathcal{R}_{\NX-1}=\NX$ if equivalent to $\mathfrak{S}(\Sigma)$ being 
span-reachable from $x_0$, and $\Rank \mathcal{O}_{\NX-1}=\NX$ is equivalent to
observability of $\SWS(\Sigma)$, as in the terminology of
\cite[Theorem 4]{Pet12}, 
$\mathrm{IM} \mathcal{R}_{\NX-1}$ is the image of the span-reachability matrix of $(\mathfrak{S}(\Sigma),x_0)$, and
$\ker \mathcal{O}_{\NX-1}$ is the  kernel observability matrix of $(\mathfrak{S}(\Sigma),x_0)$.
Note that in \cite{Pet12} the definition of a linear switched system included the
initial state.

For CT,
from \cite[Proposition 33]{Pet06}, when applied to
 the rational representation associated with $(\mathfrak{S}(\Sigma),\mu)$, $\mu:\{f\} \ni f \mapsto x_{\mathrm o}$,   it follows that 
$\mathrm{Im} \mathcal{R}_{\NX-1}$ equals $WR(x_{\mathrm o})=\SPAN\{\hat{A}_{q_1}\cdots \hat{A}_{q_k}x \mid q_1,\ldots, q_k \in Q, k \ge 0, \mbox{ and } x=x_0 \mbox{ or } \exists q \in Q: x \in \mathrm{Im} \hat{B}_q\}$, 
where $\hat{A}_0=A_0, \hat{B}_0=B_0$, and $\hat{A}_i=A_i-A_0$, $\hat{B}_i=B_i-B_0$ for all $i \in \AQ$, $i > 0$.
Here, if $k=0$, then $\hat{A}_{q_1}\cdots \hat{A}_{q_k}$ is interpreted as the identity matrix.
But from \cite[Proposition 1]{PetCocv11} it follows that
$\mathfrak{S}(\Sigma)$ is span-reachable from $x_0$ if and only if $WR(x_{\mathrm o})=\X$. Hence,
by Theorem \ref{th:alpvlsrelation}, $\Sigma$ is span-reachable from $x_{\mathrm o}$ if and only if
$\mathrm{Im} \mathcal{R}_{\NX-1}=WR(x_{\mathrm o})=\X$. The latter condition is equivalent to
$\Rank \mathcal{R}_{\NX-1}=\NX$.
Similarly, from \cite[Proposition 34]{Pet06}, applied to 
 the rational representation associated with $(\mathfrak{S}(\Sigma),\mu)$, $\mu:\{f\} \ni f \mapsto 0$,   it follows that 
$\ker \mathcal{O}_{\NX-1}$ equals $O=\bigcap_{q \in Q} \ker C_q \cap \bigcap_{k=1}^{\infty} \bigcap_{q_1\cdots q_k \in Q} \ker \hat{C}_q\hat{A}_{q_k}\cdots \hat{A}_{q_1}$,
where $\hat{A}_0=A_0, \hat{C}_0=C_0$, and $\hat{A}_i=A_i-A_0$, $\hat{C}_i=C_i-C_0$ for all $i \in \AQ$, $i > 0$.

From \cite[Theorem 2]{PetCocv11} it follows that $\mathfrak{S}(\Sigma)$ is observable if and only if
$O=\{0\}$.  
Hence, by Theorem \ref{th:alpvlsrelation}, $\Sigma$ is observable if and only if
$\ker \mathcal{O}_{\NX-1}=O=\{0\}$. The latter condition is equivalent to
$\Rank \mathcal{O}_{\NX-1}=\NX$.
\end{proof}
\begin{proof}[Proof of Corollary \ref{lem:min}]
 Define the spaces $V_1=\mathrm{Span}\{b_1,\ldots,b_{r_m}\}$, $V_2=\mathrm{Span}\{b_{r_m+1},\ldots,b_r\}$ and $V_3=\mathrm{Span}\{b_{r+1},\ldots,b_{n_{\mathrm x}}\}$. It then follows
 that $V_1+V_2$ is $A_i$-invariant, and $V_2$ is $A_i$ invariant for all $i \in \AQ$, since $\mathrm{Im} \{\mathcal{R}_{n_{\mathrm x}-1}\}$ and $\ker \{\mathcal{O}_{n_{\mathrm x}-1}\}$ 
 are 
 $A_i$ invariant subspaces and $V_1+V_2=\mathrm{Im} \{\mathcal{R}_{n_{\mathrm x}-1}\}$, and $V_2=\mathrm{Im} \{\mathcal{R}_{n_{\mathrm x}-1}\} \cap \ker  \{\mathcal{O}_{n_{\mathrm x}-1}\}$.
 Moreover, $\mathrm{Im} B_i \subseteq V_1+V_2=\mathrm{Im} \{\mathcal{R}_{n_{\mathrm x}-1}\}$, $i \in \AQ$,  $x_0 \in V_1+V_2=\mathrm{Im} \{\mathcal{R}_{n_{\mathrm x}-1}\}$, and 
 $V_2 \subseteq \ker \{ \mathcal{O}_{n_{\mathrm x}-1}\} \subseteq \ker C_i$, $i \in \AQ$. From this, it follows that $\hat{A}_i,\hat{B}_i,\hat{C}_i$, $i \in \AQ$ and $\hat{x}_0$
 satisfy \eqref{kalman_decomp}. Let $\hat{\mathcal{R}}_k$  be the $k$th step extended reachability matrix of $\hat{\Sigma}$ for $x_{\mathrm o}$, and let $\hat{\mathcal{O}}_k$
 be the $k$th step extended observability matrix of $\hat{\Sigma}$. Similarly,
 let  $\mathcal{R}_k^{\mathrm m}$  be the $k$th step extended reachability matrix of $\Sigma^{\mathrm m}$ for $x_{\mathrm o}^{\mathrm m}$ and 
 let $\mathcal{O}_k^{\mathrm m}$ be the $k$th step extended  observability matrices of $\Sigma^{\mathrm m}$. By induction on $k$, it 
 follows that 
 \[ 
   \begin{split}
    & \hat{\mathcal{R}}_k=\begin{bmatrix} \mathcal{R}_k^{\mathrm m} \\ R_k \\ 0 \end{bmatrix}, ~  \hat{\mathcal{O}}_k=\begin{bmatrix} \mathcal{O}_k^{\mathrm m} & 0 & O_k \end{bmatrix}, 
   \end{split}
 \]
 where $R_k$ is a suitable matrix with $r-r_m$ rows, and $O_k$ is a suitable matrix with $n-r$ columns. Since $\rank\{ \hat{\mathcal{R}}_{n_{\mathrm x}-1}\}=r$ and 
 $\mathcal{R}_{n_{\mathrm x}-1}^{\mathrm m}$ has $r_m$ rows, it then follows that $\rank \{ \mathcal{R}_{n_{\mathrm x}-1}^{\mathrm m}\}=r_m$ from which 
 by Theorem \ref{theo:reachobs:rank:cond} and $\rank~ \{\mathcal{R}_{n_{\mathrm x}-1}\}=\rank \{\mathcal{R}_{r_m-1}\}$ it follows that $\Sigma^{\mathrm m}$ is span-reachable from $x^{\mathrm m}_{\mathrm o}$
 Similarly, since $\rank \{\hat{\mathcal{O}}_{n_{\mathrm x}-1}\}=r$ and 
 $\mathcal{O}_{n_{\mathrm x}-1}^{\mathrm m}$ has $r_m$ colums, it then follows that $\rank \{\mathcal{O}_{n_{\mathrm x}-1}^{\mathrm m}\}=r_m$ from which 
 by Theorem \ref{theo:reachobs:rank:cond} and $\rank \{\mathcal{O}_{n_{\mathrm x}-1}\}=\rank \{ \mathcal{O}_{r_m-1}\}$ it follows that $\Sigma^{\mathrm m}$ is observable.
 Finally, for any $u \in \mathcal{U}$, $p \in \mathcal{P}$, let $\hat{x}$ and $y$ be such that $(\hat{x},y,u,p)$ is a solution of $\hat{\Sigma}$ and $\hat{x}(0)=\hat{x}_{\mathrm o}$.
 Let $z_1$ be the function formed by the first $r_m$ cooordinates of $\hat{x}$, let $z_2$ be formed by the coordinates of $\hat{x}$ ranging from $r_m+1$ to $r$ and let $z_3$
 be formed by the last $n_{\mathrm x}-r$ coordinates of $\hat{x}$. It then follows that
 $\xi z_3(t)=(A^{\prime\prime\prime\prime}_0+\sum_{i=1}^{n_{\mathrm p}}  A^{\prime\prime\prime\prime}_i p_i(t))z_3(t)$, and as $z_3(0)=0$, it then follows that $z_3(t)=0$. 
 Therefore $\xi z_1(t)=(A^{\mathrm m}_0+\sum_{i=1}^{n_{\mathrm p}}  A^{\mathrm m}_i p_i(t))z_1(t) + (B^{\mathrm m}_0+\sum_{i=1}^{n_{\mathrm p}}  B^{\mathrm m}_i p_i(t))u(t)$.
 Moreover, notice that $y(t)=C^{\mathrm m}_iz_1(t)$. Hence, $(z_3,y,u,p)$ is a solution of $\Sigma^{\mathrm m}$ with $z_3(0)=x_{\mathrm o}^{\mathrm m}$. That is,
 $\mathfrak{Y}_{\hat{\Sigma},\hat{x}_{\mathrm o}}=\mathfrak{Y}_{\Sigma^{\mathrm m},x_{\mathrm o}^{\mathrm m}}$. But $\hat{\Sigma}$ and $\Sigma$ are isomorphic, with $T$
 being the isomorphism, and $Tx_{\mathrm o}=\hat{x}_{\mathrm o}$. 
 Hence, $\mathfrak{Y}_{\hat{\Sigma},\hat{x}_{\mathrm o}}=\mathfrak{Y}_{\Sigma, x_{\mathrm o}}=\mathfrak{Y}_{\Sigma^{\mathrm m},x_{\mathrm o}^{\mathrm m}}$.  
 That is, if $\Sigma$ is a realization of $\mathfrak{F}$ from $x_{\mathrm o}$, then $\Sigma^{\mathrm m}$ is a realization of $\mathfrak{F}$. Since according to the discussion above,
 $\Sigma^{\mathrm m}$ is observable and span-reachable from $x_{\mathrm o}^{\mathrm m}$, by Theorem \ref{theo:min} it is a minimal realization of $\mathfrak{F}$. 
\end{proof}
\begin{proof}[Proof of Theorem \ref{min:compare:col1}]
 Notice that $\mathfrak{Y}_{\Sigma,x_1}=\mathfrak{Y}_{\Sigma,x_2}$ is equivalent to
 $\forall p \in \mathcal{P}: \mathfrak{Y}_{\Sigma,x_1}(0,p)=\mathfrak{Y}_{\Sigma,x_2}(0,p)$,
 since $\mathfrak{Y}_{\Sigma,x_i}(u,p)=\mathfrak{Y}_{\Sigma,x_i}(0,p)+\mathfrak{Y}_{\Sigma,0}(u,p)$ for all
 $i=1,2$, $u \in \mathcal{U}$, $p \in \mathcal{P}$.
 Hence, it is enough to show that there exists $t_{\mathrm o} > 0$, $p_o \in \mathcal{P}$,
 such that for all $x_1,x_2 \in \X$, $(\forall p \in \mathcal{P}: \mathfrak{Y}_{\Sigma,x_1}(0,p)=\mathfrak{Y}_{\Sigma,x_2}(0,p)) \iff (\forall \tau \in [0,t_{\mathrm o}]: \mathfrak{Y}_{\Sigma,x_1}(0,p_o)(\tau)=\mathfrak{Y}_{\Sigma,x_2}(0,p_o)(\tau)$, and that for the CT case, $p_o$
 can be chosen to be analytic.
 
 Define for any initial state $x_{\mathrm o}$ of $\Sigma$ the function $s_{\Sigma,x_{\mathrm o}}: \mathcal{P} \rightarrow \mathcal{Y}$ by
  $s_{\Sigma,x_{\mathrm o}}(p)(t)=\int_0^t \mathfrak{Y}_{\Sigma,x_{\mathrm o}}(0,p)(s)ds$, 
 in CT, and $s_{\Sigma,x_{\mathrm o}}(p)(0)=0$ and $s_{\Sigma,x_{\mathrm o}}(p)(t+1)=\mathfrak{Y}_{\Sigma,x_{\mathrm o}}(0,p)(t)$
 in DT, for all $t \in \mathbb{T}$.
 It then follows that $s=s_{\Sigma,x_0}(p)$ 
 is the output of the bilinear system
 \begin{equation}
 \label{eq:bilin:aux}
    \begin{split}
    & \delta x(t) = \sum_{q=1}^{\QNUM} A_qx(t)p_q(t), \quad  \delta z(t)=\sum_{q=1}^{\QNUM} C_qx(t)p_q(t) \\
    & s(t)=z(t) 
   \end{split}
 \end{equation}
  from the initial state $(x^T(0),z(0))^T=(x^T_{\mathrm o},0^T)^T$.

  For any $p \in \mathcal{P}$,
 denote by $s((x_{\mathrm o},z_{\mathrm o}),p)$ the output trajectory of \eqref{eq:bilin:aux} 
  generated from the initial state $(x^T_{\mathrm o},z^T_{\mathrm o})^T$, $x_{\mathrm o} \in \X$, $z_{\mathrm o} \in \mathbb{R}^{\NY}$. 
  We will call \eqref{eq:bilin:aux} observable, if 
  for each pair of distinct states $(x_1,z_1) \ne (x_2,z_2)$, there exists
  $p \in \mathcal{P}$ such that $s((x_1,z_1),p) \ne s((x_2,z_2),p)$.
  Notice that \eqref{eq:bilin:aux} is observable if and only if $\Sigma$ is observable.
  Indeed, 
  $\delta s((x_{\mathrm o},z_{\mathrm o}),p)=\mathfrak{Y}_{\Sigma,x_{\mathrm o}}(p,0)$ and $s((x_{\mathrm o},z_{\mathrm o}),p)(0)=z_0$.
  Hence, if $\Sigma$ is observable and there exists $(x_1,z_1) \ne (x_2,z_2)$ 
  such that $s((x_1,z_1),p) = s((x_2,z_2),p)$ for every $p \in \mathcal{P}$, then
  $z_1=z_2$ and $\mathfrak{Y}_{\Sigma,x_1}(p,0)=\mathfrak{Y}_{\Sigma,x_2}(0,p)$ for all 
  $p \in \mathcal{P}$. The latter implies that $x_1=x_2$ by observability of $\Sigma$.
  Conversely, if \eqref{eq:bilin:aux} is observable, but there exists 
  $x_1 \ne x_2$ such that $\mathfrak{Y}_{\Sigma,x_1}(p,0) = \mathfrak{Y}_{\Sigma,x_2}(0,p)$ for all
  $p \in \mathcal{P}$, then 
  $s((x_1,0),p)=s((x_2,0),p)$ for all 
  $p \in \mathcal{P}$. The latter contradicts to observability of \eqref{eq:bilin:aux}.

 We argue that there exists a $t_{\mathrm o} > 0$, $p_{\mathrm o} \in \mathcal{P}$, such that 
 for any state $(x_i,z_i)$ of \eqref{eq:bilin:aux}, 
  $i=1,2$   $s((x_1,z_1),p_o)=s((x_2,z_2),p_o)$ on $[0,t_{\mathrm o}]$ implies 
  $\forall p \in \mathcal{P}: s((x_1,z_1),p) = s((x_2,z_2),p)$, and in the
 CT case $p_o$ is analytic. If such a $p_{\mathrm o}$ exists, then 
 for any two states $x_1,x_2$  of $\Sigma$, 
 $\mathfrak{Y}_{\Sigma,x_1}(0,p_{\mathrm o}) = \mathfrak{Y}_{\Sigma,x_2}(0,p_{\mathrm o})$ on $[0,t_{\mathrm o}]$ implies 
 $\forall p \in \mathcal{P}: \mathfrak{Y}_{\Sigma,x_1}(0,p)=\mathfrak{Y}_{\Sigma,x_2}(p,0)$.
Indeed, $\mathfrak{Y}_{\Sigma,x_1}(p_{\mathrm o},0) = \mathfrak{Y}_{\Sigma,x_2}(p_{\mathrm o},0)$ on $[0,t_{\mathrm o}]$ implies
 $s((x_1,0),p_{\mathrm o})=s((x_2,0),p_{\mathrm o})$ on $[0,t_{\mathrm o}]$ and hence
$s((x_1,0),p)=s((x_2,0),p)$ for all $p \in \mathcal{P}$, and thus
$\mathfrak{Y}_{\Sigma,x_1}(p,0)=\delta s((x_1,0),p)=\delta s((x_2,0),p)=\mathfrak{Y}_{\Sigma,x_2}(p,0)$ 
for all $p \in \mathcal{P}$.

 For the CT, we can take any $t_{\mathrm o} > 0$, and we take 
 $p_o$ to be the universal input 
 described in
 \cite[Theorem 2.11]{SussmannUnivObs}, when applied to \eqref{eq:bilin:aux}.
 Note that here we view \eqref{eq:bilin:aux} as a system whose inputs $p$ take
 values in the set $\mathbb{P}$.
 It is easy to see that $\mathbb{P}$ satisfies the assumptions of 
 \cite[Theorem 2.11]{SussmannUnivObs}.
 Indeed, $\mathbb{P}$ is a convex set, hence
 $\mathbb{P}$ is contained in the closure of its interior (see \cite[Corollary 2.3.9]{WebsterBook}). Moreover,
 by \cite[Theorem 2.3.5]{WebsterBook} the interior of $\mathbb{P}$ is convex and hence it
 is connected.
 For the DT case , existence of $p_o$ follows by applying the proof of
 \cite[Theorem 5.3]{WangIO}
 to \eqref{eq:bilin:aux}. 
 In fact, below we give a simplified proof along the lines of \cite[Theorem 5.3]{WangIO}.
 For every $p \in \mathcal{P}$, $0 < t \in \mathbb{N}$ define 
 $B_{p,t}=\{ (h_1,h_2) \in \mathbb{R}^{\NX+\NY} \times \mathbb{R}^{\NX+\NY} \mid \forall s \in [0,t]: s(h_1,p)(s)=s(h_2,p)(s) \}$.
 Clearly, $B_{p,t}$ is a linear subspace of $\mathbb{R}^{\NX+\NY} \times \mathbb{R}^{\NX+\NY}$.
 Let $p^{*},t^{*}$ be such that $t^{*} > 0$ and $\dim B_{p^*,t^*} \le \dim B_{p,t}$ for any
 $p \in \mathcal{P}$, $t > 0$. Such a $p^{*},t^{*}$ will always exist
 since $\dim B_{p,t} \le 2(\NX+\NY)$ is always finite.
 We claim that $B_{p^{*},t^{*}}=\{(h,h)  \mid \mathbb{R}^{\NX+\NY} \}$.
 hence $p_o=p^{*}$, $t_f=t^{*}$ is the desired input. Assume the contrary, i.e. 
 there exists $(h_1,h_2) \in B_{p^{*},t^{*}}$ such that  $h_1 \ne h_2$.
 Let $(\hat{x}_i,\hat{z}_i)$ be the state of \eqref{eq:bilin:aux} at $t^{*}$, 
 if the initial state is $h_i$ and the input is $p^{*}$ and assume that
 $h_i=(x_i,z_i)$, $i=1,2$.
 Let $A(p^*(s))=\sum_{q \in Q} p^{*}_q(s)A_q$ for any $s \in \mathbb{N}$.
 Then $\hat{x}_i=(A(p^*(t^{*}-1))A(p^*(t^{*}-2))\cdots A(p^*(0))x_i$, $i=1,2$.
 Note that $z_1=s(h_1,p^*)(0)=s(h_2,p^*)(0)=z_2$, and hence
 $h_1 \ne h_2$ implies $x_1 \ne x_2$.
 Since by the assumptions of the theorem, $A(p^*(s))$ is invertible for all $s \in \mathbb{N}$,
 it then follows that $\hat{x}_1 \ne \hat{x}_2$ and hence 
 $(\hat{x}_1,\hat{z}_1) \ne (\hat{x}_2,\hat{z}_2)$.
 From the observability of \eqref{eq:bilin:aux} it then follows that
 there exists $\tau >0$, $\hat{p} \in \mathcal{P}$ such that
 $s((\hat{x}_1,\hat{z}_1),\hat{p})(\tau) \ne s((\hat{x}_2,\hat{z}_2),\hat{p})(\tau))$.
 Hence, for $p \in \mathcal{P}$ defined by 
 \( 
      p(s)=\left\{\begin{array}{rl}
                   p^{*}(s) & \mbox{ if } s \le \tau \\
                   p(s-\tau) & \mbox{ if } s > \tau 
           \end{array}\right., 
 \)   
 $(h_1,h_2) \notin B_{p,t^*+\tau}$. Note that $B_{p,t^*+\tau} \subseteq B_{p^*,t^{*}}$.
 Hence, $\dim B_{p,t^*+\tau} < B_{p^*,t^{*}}$, which is a contradiction.

\end{proof}
\begin{proof}[Proof of Corollary \ref{min:compare:col11}]
 Choose $p_o$ as in Theorem \ref{min:compare:col1}. The statement follows
 from the definition of complete observability for LTV systems.
\end{proof}
\begin{proof}[Proof of Corollary \ref{min:compare:col2}]
 Consider the dual LPV-SSA
 $\Sigma^{T}=(\mathbb{P},\{(A^T_q,C^T_q, B_q^T)\}_{q=0}^{n_p})$.
 If $\Sigma$ is span-reachable from zero, 
then $\rank \mathcal{R}_{\NX-1}=n_x$,
 where $\mathcal{R}_{\NX-1}$ is the ($\NX-1$)-step extended reachability matrix of $\Sigma$ from $0$.
 Let $\mathcal{O}_{\NX-1}$ be the ($\NX-1$)-step extended observability matrix of
 $\Sigma^T$. It is clear that $\mathcal{O}_{\NX-1}=\mathcal{R}_{\NX-1}^T$, and hence
 $\rank \mathcal{O}_{\NX-1}=\NX$ and thus $\Sigma^T$ is observable.
 From Corollary \ref{min:compare:col11} it follows that there exist
 $t_{\mathrm o} > 0$ and $p_{\mathrm o}$ such that the LTV system associated with $\Sigma^T$, $p_{\mathrm o}$ is completely observable on $[0,t_{\mathrm o}]$. 
 This LTV system is given
 by matrices $A(t)=A^T(p_{\mathrm o}(t))$, $B(t)=C^T(p_{\mathrm o}(t))$, $C(t)=B^T(p_{\mathrm o}(t))$
 The dual of this LTV system,
 defined by the matrices $A^T(t)=A(p_{\mathrm o}(t))$, $C^T(t)=B(p_{\mathrm o}(t))$, $B^T(t)=C(p_{\mathrm o}(t))$
 is completely controllable on $[0,t_{\mathrm o}]$.
 But this dual LTV system is exactly the LTV system associated with $\Sigma$,
 $p_{\mathrm o}$. Hence, by choosing $p_{\mathrm e}=p_{\mathrm o}$ and $t_{\mathrm e}=t_{\mathrm o}$ the statement of the corollary holds.
\end{proof}
\begin{proof}[Proof of Theorem \ref{min:compare}]
 We prove the theorem for observability, the 
 statement on span-reachability follows by duality. 

 From Corollary \ref{min:compare:col1} it follows that
 the LTV system obtained from
 $\Sigma$ by setting the scheduling parameter to $p_{\mathrm o}$
 is observable on $[0,t_{\mathrm o}]$
 For CT case, from
 \cite{SilvermanObs} it then follows that
 there exists $k \ge 0$, such that the $k$ step observability
 matrix 
 $\mathrm{O}_k(p_{\mathrm o}(t))$ is such that 
 $\rank{\mathrm{O}_k(p_o(t))} = \NX$ for almost all $t$ on $(0,t_{\mathrm o})$.
 Similarly, for the DT case we get that the $k$-step observability
 matrix $\mathrm{O}_k(p_o)$ has rank $\NX$.
 This then means that if we interpret $\mathrm{O}_k(p)$
 as a matrix whose elements belong to the field
 of meromorphic functions $\mathcal{R}$, then
 $\rank{\mathrm{O}_k(p) }= \NX$, where the rank is now
 interpreted over the field $\mathcal{R}$. Hence,
 from Cayley-Hamilton theorem for matrices over
 $\mathcal{R}$ it follows that the rank of $\mathrm{O}_{\NX-1}(p)$ over
 $\mathcal{R}$ is $\NX$, i.e. $\Sigma$ is structurally observable.

\end{proof}
\begin{proof}[Proof of Theorem \ref{theo:exist}]
 Recall from Corollary \ref{switchediofunction:col1} that the sub-Markov parameters of $\mathfrak{F}$ and $\SWS(\mathfrak{F})$ coincide, i.e. 
 $\theta_{\mathfrak{F}}(s)=\theta_{\SWS(\mathfrak{F})}(s)$, $s \in \Words$.  Moreover, when applied to LSS-SSs, the sub-Markov parameters from Definition \ref{def:grimarkov} coincide with the Markov-parameters of \cite{Pet12,PetCocv11}.

More precisely, the values of $\theta_{\mathfrak{S}(\mathfrak{F})}$ (both in CT and DT) coincide with the Markov-parameters defined in  \cite[Definition 11]{Pet12} of a suitable discrete-time input-output map function  $\hat{\mathfrak{F}}: \mathbb{U}^{\mathbb{N}} \times \mathbb{P}^{\mathbb{N}}_{sw} \rightarrow \mathbb{Y}^{\mathbb{N}}$. In fact, 
$\hat{\mathfrak{F}}$ is defined as $\hat{\mathfrak{F}}(u,p)(t)=\eta^{\mathfrak{F}}_{i_t}(i_0\cdots i_{t-1})+\sum_{j=0}^{t-1} \theta^{\mathfrak{F}}_{i_t,i_{j}}(i_{j+1}\cdots i_{t-1})u(j)$, 
for all $p \in \mathbb{P}^{\mathbb{N}}_{sw}$, $u \in \mathbb{U}^{\mathbb{N}}$, $t \in \mathbb{N}$, 
where $p(k)=e^{i_k}$, $i_k \in \mathbb{I}_0^{n_{\mathrm p}}$, $k=0,\ldots,t$. By Lemma \ref{lem:realiofunction} and \cite[Lemma 1]{Pet12}, an LSS-SS $(\mathbb{P}_{sw}, \{A_i,B_i,C_i,0\}_{i=0}^{\QNUM})$ is a realization of $\SWS(\mathfrak{F})$ (in (CT) or (DT)) if and only if the LSS-SS $(\mathbb{P}_{sw}, \{\hat{A}_i,\hat{B}_i,\hat{C}_i,0\}_{i=0}^{n_{\mathrm p}})$, where $(\hat{A}_0,\hat{B}_0,\hat{C}_0)=(A_0,B_0,C_0)$, $(\hat{A}_i,\hat{B}_i,\hat{C}_i)=(A_i-A_0,B_i-B_0,C_i-C_0)$, $i \in \mathbb{I}_0^{n_{\mathrm p}}$, $i > 0$, is a realization of $\hat{\mathfrak{F}}$.
Notice that $H_{\mathfrak{F}} = H_{\SWS(\mathfrak{F})}$
 and that the former definition of the Hankel-matrix coincides with the one for $\hat{\mathfrak{F}}$  (see \cite[Definition 13]{Pet12}).

 By Theorem ~\ref{th:alpvlsrelation} and Corollary ~\ref{th:alpvlsrelation:col1}, 
 $\mathfrak{F}$ is realizable by an LPV-SSA if and only if 
 $\SWS(\mathfrak{F})$ is realizable by a LSS-SS. 
 From \cite[Theorem 5]{Pet12} it follows that that latter is equivalent to
 $\Rank H_{\mathfrak{F}}=\Rank H_{\SWS(\mathfrak{F})} < +\infty$ 

 Finally, from the proof of Theorem \ref{theo:min} it follows that
 a LPV-SSA $\Sigma$ is a minimal realization of $\mathfrak{F}$ if and only if
 the LSS-SS $\mathfrak{S}(\Sigma)$ is a minimal realization of 
 $\SWS(\mathfrak{F})$. From \cite[Theorem 5]{Pet12} it then follows that $\Rank H_{\SWS(\mathfrak{F})} = \dim \mathfrak{S}(\Sigma)$ and hence $\Rank H_{\mathfrak{F}}=\Rank H_{\SWS(\mathfrak{F})}=\dim \mathfrak{S}(\Sigma)=\dim \Sigma$.
\end{proof}
\begin{proof}[Proof of Theorem \ref{theo:part_real}] 
 From the proof of Theorem \ref{theo:exist} it follows that $H_{\mathfrak{F}}=H_{\SWS(\mathfrak{F})}$ and
 hence $H_{\mathfrak{F}}(n,m)=H_{\SWS(\mathfrak{F})}(n,m)$ for any $n,m \in \mathbb{N}$.
 It is also easy to see that Algorithm \ref{alg0} applied to $H_{\mathfrak{F}}(n,m)=H_{\SWS(\mathfrak{F})}(n,m)$, $m=n+1$,
 coincides with \cite[Algorithm 1]{Pet12} for the Hankel-matrix of $\hat{\mathfrak{F}}$, where $\hat{\mathfrak{F}}$ is the input-output map defined in the proof of Theorem \ref{theo:exist}.
 Hence \cite[Theorem 6]{Pet12} (DT) the following holds. 
 If $\Rank H_{\mathfrak{F}}(n,m)=\Rank H_{\mathfrak{F}}(n,m)=\Rank H_{\mathfrak{F}}(n,m)$ then 
 Algorithm \ref{alg0} returns an LPV-SSS $\Sigma$ and an initial state $x_{\mathrm o}$ such  that  
 $\mathfrak{S}(\Sigma)$ is an $2n+1$ partial realization of $\hat{\mathfrak{F}}$, and hence of $\SWS(\mathcal{F})$ from $x_{\mathrm o}$.
 Since by Corollary \ref{switchediofunction:col1} the sub-Markov parameters of $\mathfrak{F}$ and $\SWS(\mathfrak{F})$ 
 coincide, it then follows that $\Sigma$ is an $2n+1$ partial realization of $\mathfrak{F}$ from $x_{\mathrm o}$.
 If $\Rank H_{\mathfrak{F}}(n,n)=\Rank H_{\mathfrak{F}}$ then 
 $\Rank H_{\mathfrak{F}}(n,n)=\Rank H_{\mathfrak{F}}(n+1,n)=\Rank H_{\mathfrak{F}}(n,n+1)$. In addition, in this case
 $\mathfrak{S}(\Sigma)$ is a minimal realization of $\SWS(\mathfrak{F})$ from $x_{\mathrm o}$.
 From Theorem ~\ref{th:alpvlsrelation} and the proof of
 Theorem \ref{theo:min} it then follows that $\Sigma$ is a minimal realization of
 $\mathfrak{F}$. If $\mathfrak{F}$ has a LPV-SSA realization $\hat{\Sigma}$
 such that $\dim \hat{\Sigma} \le \NX+1$, then
 by Theorem ~\ref{th:alpvlsrelation} $\mathfrak{S}(\hat{\Sigma})$ is a 
 realization of $\SWS(\mathfrak{F})$ and hence by \cite[Theorem 6]{Pet12} 
 $\Rank H_{\SWS(\mathfrak{F})}(n,n)=\Rank H_{\SWS(\mathfrak{F})}$ and hence $\Rank H_{\mathfrak{F}}(n,n)=\Rank H_{\SWS(\mathfrak{F})}(n,n)=\Rank H_{\SWS(\mathfrak{F})}=\Rank H_{\mathfrak{F}}$.
  Finally, that without any conditions, Algorithm \ref{alg0} returns an $n$-moment partial realization follows by adapting the argument of
  \cite[Chapter 10, Proposition 46]{Pet06}. 
\end{proof}

\bibliographystyle{IEEEtran}

\end{document}